\newtheorem{thm}{Theorem}[section]
\newtheorem{lem}[thm]{Lemma}
\theoremstyle{definition}
\numberwithin{equation}{section}
\newcommand{\fa}{\mathfrak{a}}
\newcommand{\fb}{\mathfrak{b}}
\newcommand{\fc}{\mathfrak{c}}
\newcommand{\fd}{\mathfrak{d}}
\newcommand{\fg}{\mathfrak{g}}
\newcommand{\ff}{\mathfrak{f}}
\newcommand{\fh}{\mathfrak{h}}
\newcommand{\fr}{\mathfrak{r}}
\newcommand{\fs}{\mathfrak{s}}
\newcommand{\fv}{\mathfrak{v}}
\newcommand{\cA}{\mathcal{A}}
\newcommand{\cB}{\mathcal{B}}
\newcommand{\cE}{\mathcal{E}}
\newcommand{\cF}{\mathcal{F}}
\newcommand{\cK}{\mathcal{K}}
\newcommand{\cM}{\mathcal{M}}
\newcommand{\mC}{\mathbb{C}}
\newcommand{\mD}{\mathbb{D}}
\newcommand{\mN}{\mathbb{N}}
\newcommand{\mR}{\mathbb{R}}
\newcommand{\mT}{\mathbb{T}}
\newcommand{\mZ}{\mathbb{Z}}
\newcommand{\fA}{\mathfrak{A}}
\newcommand{\fB}{\mathfrak{B}}
\newcommand{\fS}{\mathfrak{S}}
\newcommand{\alg}{\operatorname{alg}}
\newcommand{\eps}{\varepsilon}
\newcommand{\Ind}{\operatorname{Ind}}
\newcommand{\Co}{\operatorname{Co}}
\newcommand{\Op}{\operatorname{Op}}
\begin{document}
%
%
%
%
%
%

\title[On a Weighted SIO with Shifts and SO Data]
{On a Weighted Singular Integral Operator\\
with Shifts and Slowly Oscillating Data}

\author[A. Yu. Karlovich]{Alexei Yu. Karlovich}
\address{
Centro de Matem\'atica e Aplica\c{c}\~oes (CMA) and\\
Departamento de Matem\'atica\\
Faculdade de Ci\^encias e Tecnologia\\
Universidade Nova de Lisboa\\
Quinta da Torre\\
2829--516 Caparica\\
Portugal} \email{oyk@fct.unl.pt}

\author[Yu. I. Karlovich]{Yuri I. Karlovich}
\address{%
Facultad de Ciencias \\
Universidad Aut\'onoma del Estado de Morelos\\
Av. Universidad 1001, Col. Chamilpa\\
C.P. 62209 Cuernavaca, Morelos \\
M\'exico} \email{karlovich@uaem.mx}

\author[A. B. Lebre]{Amarino B. Lebre}
\address{%
Departamento de Matem\'atica \\
Instituto Superior T\'ecnico \\
Universidade de Lisboa\\
Av. Rovisco Pais \\
1049--001 Lisboa \\
Portugal} \email{alebre@math.tecnico.ulisboa.pt}

\thanks{This work was partially supported by the Funda\c{c}\~ao para a Ci\^encia e a Tecnologia (Portuguese Foundation for Science and Technology)
through the projects PEst-OE/MAT/UI0297/2014 (Centro de Matem\'atica e Aplica\c{c}\~oes)
and PEst-OE/MAT/UI4032 /2014 (Centro de An\'alise Funcional e Aplica\c{c}\~oes).
The second author was also supported by the CONACYT Project No. 168104 (M\'exico) and by PROMEP (M\'exico) via ``Proyecto de Redes".}

\subjclass{Primary 45E05; \\ Secondary 47A53, 47B35, 47G10, 47G30.}

\keywords{%
Orientation-preserving shift,
weighted Cauchy singular integral operator,
slowly oscillating function,
Fredholmness,
index.}

\date{July 15, 2014}


\begin{abstract}
Let $\alpha,\beta$ be orientation-preserving diffeomorphism
(shifts) of $\mR_+=(0,\infty)$ onto itself with the only fixed
points $0$ and $\infty$ and $U_\alpha,U_\beta$ be the isometric shift
operators on $L^p(\mR_+)$ given by $U_\alpha f=(\alpha')^{1/p}(f\circ\alpha)$,
$U_\beta f=(\beta')^{1/p}(f\circ\beta)$, and $P_2^\pm=(I\pm S_2)/2$ where
\[
(S_2 f)(t):=\frac{1}{\pi i}\int\limits_0^\infty
\left(\frac{t}{\tau}\right)^{1/2-1/p}\frac{f(\tau)}{\tau-t}\,d\tau,
\quad t\in\mR_+,
\]
is the weighted Cauchy singular integral operator. We prove that
if $\alpha',\beta'$ and $c,d$ are continuous on $\mR_+$ and slowly
oscillating at $0$ and $\infty$, and
\[
\limsup_{t\to s}|c(t)|<1,
\quad
\limsup_{t\to s}|d(t)|<1,
\quad s\in\{0,\infty\},
\]
then the operator $(I-cU_\alpha)P_2^++(I-dU_\beta)P_2^-$
is Fredholm on $L^p(\mR_+)$ and its index is equal to zero.
Moreover, its regularizers are described.
\end{abstract}
\maketitle
\section{Introduction}
Let $\cB(X)$ be the Banach algebra of all bounded linear operators
acting on a Banach space $X$, and let $\cK(X)$ be the ideal of all
compact operators in $\cB(X)$. An operator $A\in\cB(X)$ is called
\textit{Fredholm} if its image is closed and the spaces $\ker A$
and $\ker A^*$ are finite-dimensional. In that case the number
\[
\Ind A:=\dim\ker A-\dim\ker A^*
\]
is referred to as the {\it index} of $A$ (see, e.g.,
\cite[Sections~1.11--1.12]{BS06}, \cite[Chap.~4]{GK92}).
For bounded linear operators $A$ and $B$, we will write $A\simeq B$ if
$A-B\in\cK(X)$. Recall that an  operator $B_r\in\cB(X)$ (resp. $B_l\in\cB(X)$)
is said to be a right  (resp. left) regularizer for $A$ if
\[
AB_r\simeq I \quad(\mbox{resp.}\quad B_lA\simeq I).
\]
It is well known that the operator $A$ is Fredholm on $X$ if and only if it
admits simultaneously a right and a left regularizer. Moreover, each right
regularizer differs from each left regularizer by a compact operator
(see, e.g., \cite[Chap.~4, Section 7]{GK92}). Therefore we may speak of a
regularizer $B=B_r=B_l$ of $A$ and two different regularizers of $A$ differ
from each other by a compact operator.

A bounded continuous function $f$ on $\mR_+=(0,\infty)$ is called
slowly oscillating (at $0$ and $\infty$) if for each (equivalently,
for some) $\lambda\in(0,1)$,
\[
\lim_{r\to s}\sup_{t,\tau\in[\lambda r,r]}|f(t)-f(\tau)|=0,
\quad s\in\{0,\infty\}.
\]
The set $SO(\mR_+)$ of all slowly oscillating functions forms a
$C^*$-algebra. This algebra properly contains $C(\overline{\mR}_+)$,
the $C^*$-algebra of all continuous functions on $\overline{\mR}_+
:=[0,+\infty]$. Suppose $\alpha$ is an orientation-preserving
diffeomorphism of $\mR_+$ onto itself, which has only two fixed
points $0$ and $\infty$. We say that $\alpha$ is a slowly oscillating
shift if $\log\alpha'$ is bounded and $\alpha'\in SO(\mR_+)$. The
set of all slowly oscillating shifts is denoted by $SOS(\mR_+)$.

Throughout the paper we suppose that $1<p<\infty$. It is easily
seen that if $\alpha\in SOS(\mR_+)$, then the shift operator
$W_\alpha$ defined by $W_\alpha f=f\circ\alpha$ is bounded and
invertible on all spaces $L^p(\mR_+)$ and its inverse is given
by $W_\alpha^{-1}=W_{\alpha_{-1}}$, where $\alpha_{-1}$ is the
inverse function to $\alpha$. Along with $W_\alpha$ we consider
the weighted shift operator
\[
U_\alpha:=(\alpha')^{1/p}W_\alpha
\]
being an isometric isomorphism of the Lebesgue space $L^p(\mR_+)$
onto itself. Let $S$ be the Cauchy singular integral operator
given by
\[
(Sf)(t):=\frac{1}{\pi i}\int\limits_0^\infty
\frac{f(\tau)}{\tau-t}\,d\tau,\quad t\in\mR_+,
\]
where the integral is understood in the principal value sense.
It is well known that $S$ is bounded on $L^p(\mR_+)$ for every
$p\in(1,\infty)$. Let $\cA$ be the smallest
closed subalgebra of $\cB(L^p(\mR_+))$ containing the identity
operator $I$ and the operator $S$. It is known
(see, e.g., \cite{D87}, \cite[Section~2.1.2]{HRS94},
\cite[Sections~4.2.2--4.2.3]{RSS11}, and \cite{SM86}) that $\cA$ is commutative
and for every $y\in(1,\infty)$ it contains the weighted singular integral
operator
\[
(S_y f)(t):=\frac{1}{\pi i}\int\limits_0^\infty
\left(\frac{t}{\tau}\right)^{1/y-1/p}\frac{f(\tau)}{\tau-t}\,d\tau,
\quad t\in\mR_+,
\]
and the operator with fixed singularities
\[
(R_y f)(t):=\frac{1}{\pi i}\int\limits_0^\infty
\left(\frac{t}{\tau}\right)^{1/y-1/p}\frac{f(\tau)}{\tau+t}\,d\tau,
\quad t\in\mR_+,
\]
which are understood in the principal value sense. For $y\in(1,\infty)$, put
\[
P_y^\pm:=(I\pm S_y)/2.
\]

This paper is in some sense a continuation of our papers \cite{KKL11a,KKL11b,KKL14a},
where singular integral operators with shifts were studied under the mild
assumptions that the coefficients belong to $SO(\mR_+)$ and the shifts belong
to $SOS(\mR_+)$. In \cite{KKL11a,KKL11b} we found a Fredholm criterion for the
singular integral operator
\[
N=(aI-bW_\alpha) P_p^++(cI-dW_\alpha)P_p^-
\]
with coefficients $a,b,c,d\in SO(\mR_+)$ and a shift $\alpha\in SOS(\mR_+)$.
However, a formula for the calculation of the index of the operator $N$ is still
missing. Further, in \cite{KKL14a} we proved that the operators
\[
A_{ij}=U_\alpha^i P_p^++U_\beta^j P_p^-,\quad i,j\in\mZ,
\]
with $\alpha,\beta\in SOS(\mR_+)$ are all Fredholm and their indices are
equal to zero. This result was the first step in the calculation of the index
of $N$. Here we make the next step towards the calculation of the index
of the operator $N$.

For $a\in SO(\mR)$, we will write $1\gg a$ if
\[
\limsup_{t\to s}|a(t)|<1,\quad s\in\{0,\infty\}.
\]
\begin{thm}[Main result]
\label{th:main}
Let $1<p<\infty$ and $\alpha,\beta\in SOS(\mR_+)$. Suppose $c,d\in SO(\mR_+)$ are
such that $1\gg c$ and $1\gg d$. Then the operator
\[
V:=(I-cU_\alpha)P_2^++(I-dU_\beta)P_2^-,
\]
is Fredholm on the space $L^p(\mR_+)$ and $\Ind V=0$.
\end{thm}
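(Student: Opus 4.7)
The plan is to prove Fredholmness of $V$ by constructing explicit left and right regularizers within the Banach algebra of singular integral operators with slowly oscillating shifts and coefficients developed in \cite{KKL11a,KKL11b,KKL14a}, and to compute $\Ind V = 0$ by a norm-continuous homotopy connecting $V$ to the identity.

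The foundation for both parts is the symbol calculus of that algebra. At each fixed point $s \in \{0, \infty\}$ and Mellin covariable $\xi \in \mR$, the operator $V$ admits a scalar symbol
\[
\sigma_s(\xi) \;=\; 1 \;-\; c(s)\, e^{i\xi\log\alpha'(s)}\, p_+(\xi) \;-\; d(s)\, e^{i\xi\log\beta'(s)}\, p_-(\xi),
\]
where $c(s), d(s), \log\alpha'(s), \log\beta'(s)$ are understood as cluster values at $s$, and $p_\pm(\xi) := (1 \pm \tanh(\pi\xi))/2$ is the Mellin symbol of $P_2^\pm = (I \pm S_2)/2$. Since $p_\pm(\xi) \ge 0$ and $p_+(\xi) + p_-(\xi) = 1$, the subtracted term is a convex combination satisfying
\[
|c(s)|\,p_+(\xi) + |d(s)|\,p_-(\xi) \;\le\; \max(|c(s)|,\,|d(s)|) \;<\; 1
\]
uniformly in the cluster values, by the hypotheses $1 \gg c$ and $1 \gg d$. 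Hence $\sigma_s(\xi) \ne 0$ everywhere; the Fredholm criterion from the algebra calculus then yields Fredholmness of $V$, and inverting the symbol supplies an explicit regularizer.

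The index is obtained by the homotopy $V_t := (I - tcU_\alpha)P_2^+ + (I - tdU_\beta)P_2^-$ for $t \in [0,1]$, which is norm-continuous and connects $V_0 = P_2^+ + P_2^- = I$ to $V_1 = V$. For every $t \in [0,1]$ the coefficients $tc, td \in SO(\mR_+)$ still satisfy $1 \gg tc$ and $1 \gg td$, so the same estimate gives $\sigma_{s,t}(\xi) \ne 0$ and hence Fredholmness of $V_t$. Homotopy invariance of the Fredholm index therefore yields $\Ind V = \Ind V_0 = \Ind I = 0$.

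The main obstacle is the symbol-calculus machinery itself: one must verify that $V$ lies in an algebra admitting such a scalar Fredholm symbol, identify the local symbols of the weighted shifts $U_\alpha, U_\beta$ and the operators $P_2^\pm$ at the endpoints $0$ and $\infty$, and establish compactness of commutators such as $[S_2, cU_\alpha]$ and $[S_2, dU_\beta]$ on $L^p(\mR_+)$. These structural facts form the technical heart of the argument and are developed in the authors' earlier papers on this algebra.
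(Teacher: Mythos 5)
Your index computation (the homotopy $V_t$ from $I$ to $V$ plus stability of the index) is exactly the paper's final step, and your nonvanishing estimate for the local symbol is in substance the paper's Lemma~\ref{le:range1}. The genuine gap is the step ``$\sigma_s(\xi)\ne 0$, hence $V$ is Fredholm by the Fredholm criterion from the algebra calculus.'' No such criterion is available for this operator in the earlier papers you invoke: the Fredholm theory of \cite{KKL11a,KKL11b} concerns operators $(aI-bW_\alpha)P_p^++(cI-dW_\alpha)P_p^-$ with a \emph{single} shift and the unweighted projections $P_p^\pm$, whereas $V$ involves two independent shifts $\alpha,\beta$ and the weighted projections $P_2^\pm$ on $L^p$. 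Compactness of commutators such as $[S_2,cU_\alpha]$ (Lemma~\ref{le:compactness-commutators}) is indeed known, but that alone does not produce a scalar symbol whose invertibility implies Fredholmness; the sufficiency direction is precisely the technical content that the present paper has to supply. Note also that you cannot set this up naively through Mellin pseudodifferential operators: $V$ itself is \emph{not} of the form $\Phi^{-1}\Op(\fa)\Phi$ with $\fa\in\widetilde{\cE}(\mR_+,V(\mR))$, since the would-be symbol contains $e^{i\omega(t)x}p_2^\pm(x)$, which oscillates without decay as $x\to\pm\infty$ and so fails to have uniformly bounded total variation in $x$.

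The paper closes this gap by a two-sided regularization rather than a direct symbol criterion: using $1\gg c$, $1\gg d$ it inverts the binomial operators (Lemma~\ref{le:FO}), forms $L_{\mu,2}=(I-\mu cU_\alpha)^{-1}P_2^++(I-\mu dU_\beta)^{-1}P_2^-$, and shows $V_{\mu,2}L_{\mu,2}\simeq L_{\mu,2}V_{\mu,2}\simeq H_{\mu,2}$, where $H_{\mu,2}$ \emph{is} similar to a Mellin PDO with symbol in $\widetilde{\cE}(\mR_+,V(\mR))$. This hinges on the key Lemma~\ref{le:A-inverse-R}, that $(I-vU_\gamma)^{\pm1}R_y$ are such PDOs (the factor $r_y$, decaying at $\pm\infty$, tames the oscillation), together with the identities $P_y^+P_y^-=-R_y^2/4$, $(P_y^\pm)^2=P_y^\pm+R_y^2/4$ that force $R_y$ factors to appear. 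Fredholmness of $H_{\mu,2}$ then requires nonvanishing of the \emph{product} symbol $v_{\mu,2}\ell_{\mu,2}$, i.e.\ both range lemmas (Lemmas~\ref{le:range1} and~\ref{le:range2}), not only the estimate you give for $V$'s own symbol; also the relevant ``points'' are the fiber elements $\xi\in M_s(SO(\mR_+))$ and the exponent is the cluster value of $\omega(t)=\log[\alpha(t)/t]$ rather than of $\log\alpha'$ (these coincide on the fibers, but that needs Lemmas~\ref{le:SO-nec} and~\ref{le:exponent-shift}). As written, your proposal defers exactly this machinery to prior work where it does not exist, so the Fredholmness claim — and with it the Fredholmness of each $V_t$ needed for your homotopy — is unsupported.
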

The paper is organized as follows. In Section~\ref{sec:Preliminaries} we collect
necessary facts about slowly oscillating functions and slowly oscillating shifts,
as well as about the invertibility of binomial functional operators $I-cU_\alpha$
with $c\in SO(\mR_+)$ and $\alpha\in SOS(\mR_+)$ under the assumption that $1\gg c$.
Further we prove that the operators in the algebra $\cA$ commute modulo compact
operators with the operators in the algebra $\mathcal{FO}_{\alpha,\beta}$ of functional
operators with shifts and slowly oscillating data. Finally, we show that the ranges
of two important continuous functions on $\mR$ do not contain the origin.
In Section~\ref{sec:Mellin-convolutions} we recall that the operators
$P_y^\pm$ and $R_y$, belonging to the algebra $\cA$ for every $y\in(1,\infty)$,
can be viewed as Mellin convolution operators and formulate two relations between
$P_y^+$, $P_y^-$, and $R_y$. Section~\ref{sec:Mellin-PDO} contains results on
the boundedness, compactness of semi-commutators, and the Fredholmness of Mellin
pseudodifferential operators with slowly oscillating symbols of limited smoothness
(symbols in the algebra $\widetilde{\cE}(\mR_+,V(\mR))$). Results of this section are
reformulations/modifications of corresponding results on Fourier pseudodifferential
operators obtained by the second author in \cite{K06} (see also \cite{K06-IWOTA,K08,K09}).
Notice that those results are further generalizations of earlier results by Rabinovich
(see \cite[Chap.~4]{RRS04} and the references therein) obtained for Mellin
pseudodifferential operators with $C^\infty$ slowly oscillating symbols.

In \cite[Lemma~4.4]{KKL14a} we proved that the operator
$U_\gamma R_y$ with $\gamma\in SOS(\mR_+)$ and $y\in(1,\infty)$ can be viewed
as a Mellin  pseudodifferential operator with a symbol in the algebra
$\widetilde{\cE}(\mR_+,V(\mR))$. In Section~\ref{sec:applications-Mellin-PDO}
we generalize that result and prove that $(I-vU_\gamma)R_y$ and $(I-vU_\gamma)^{-1}R_y$
with $y\in(1,\infty)$, $\gamma\in SOS(\mR_+)$, and $v\in SO(\mR_+)$ satisfying
$1\gg v$, can be viewed as Mellin pseudodifferential operators with symbols
in the algebra $\widetilde{\cE}(\mR_+,V(\mR))$. This is a key result
in our analysis.

Section~\ref{sec:proof} is devoted to the proof of Theorem~\ref{th:main}.
Here we follow the idea, which was already used in a simpler situation
of the operators $A_{ij}$ in \cite{KKL14a}. With the aid of results of
Section~\ref{sec:Preliminaries} and Section~\ref{sec:applications-Mellin-PDO},
we will show that for every $\mu\in[0,1]$ and $y\in(1,\infty)$,
the operators
\begin{align*}
&
[(I-\mu cU_\alpha)P_y^++(I-\mu dU_\beta)P_y^-]
\cdot
[(I-\mu cU_\alpha)^{-1}P_y^++(I-\mu dU_\beta)^{-1}P_y^-],
\\
&
[(I-\mu cU_\alpha)^{-1}P_y^++(I-\mu dU_\beta)^{-1}P_y^-]
\cdot
[(I-\mu cU_\alpha)P_y^++(I-\mu dU_\beta)P_y^-]
\end{align*}
are equal up to compact summands to the same operator similar to a
Mellin pseudodifferential operator with a symbol in the algebra
$\widetilde{\cE}(\mR_+,V(\mR))$. Moreover, the latter pseudodifferential operator
is Fredholm whenever $y=2$ in view of results of Section~\ref{sec:Mellin-PDO}.
This will show that each operator
\[
V_{\mu,2}=(I-\mu cU_\alpha)P_2^++(I-\mu dU_\beta)P_2^-
\]
is Fredholm on $L^p(\mR_+)$. Considering the homotopy $\mu\mapsto V_{\mu,2}$ for
$\mu\in[0,1]$, we see that the operator $V$ is homotopic to the identity operator.
Therefore, the index of $V$ is equal to zero. This will complete the proof of
Theorem~\ref{th:main}.

As a by-product of the proof of the main result, in Section~\ref{sec:Regularization}
we describe all regularizers of a slightly more general operator
\[
W=(I-cU_\alpha^{\eps_1})P_2^++(I-dU_\beta^{\eps_2})P_2^-,
\]
where $\eps_1,\eps_2\in\{-1,1\}$ and show that
\[
G_y W\simeq R_y
\]
for every $y\in(1,\infty)$, where $G_y$ is an operator similar to a Mellin
pseudodifferential operator with a symbol in $\widetilde{\cE}(\mR_+,V(\mR))$
with some additional properties. The latter relation for $y=2$ will play an
important role in the proof of an index formula for the operator $N$ in our
forthcoming work \cite{KKL15-progress}.
\section{Preliminaries}\label{sec:Preliminaries}
\subsection{Fundamental Property of Slowly Oscillating Functions}
For a unital commutative Banach algebra $\fA$, let $M(\mathfrak{A})$
denote its maximal ideal space. Identifying the points
$t\in\overline{\mR}_+$ with the evaluation functionals $t(f)=f(t)$
for $f\in C(\overline{\mR}_+)$, we get
$M(C(\overline{\mR}_+))=\overline{\mR}_+$. Consider the fibers
\[
M_s(SO(\mR_+))
:=
\big\{\xi\in M(SO(\mR_+)):\xi|_{C(\overline{\mR}_+)}=s\big\}
\]
of the maximal ideal space $M(SO(\mR_+))$ over the points
$s\in\{0,\infty\}$. By \cite[Proposition~2.1]{K08}, the set
\[
\Delta:=M_0(SO(\mR_+))\cup M_\infty(SO(\mR_+))
\]
coincides with $(\operatorname{clos}_{SO^*}\mR_+)\setminus\mR_+$
where $\operatorname{clos}_{SO^*}\mR_+$ is the weak-star closure
of $\mR_+$ in the dual space of $SO(\mR_+)$. Then $M(SO(\mR_+))
=\Delta\cup\mR_+$. By \cite[Lemma~2.2]{KKL11b}, the fibers
$M_s(SO(\mR_+))$ for $s\in\{0,\infty\}$ are connected compact Hausdorff
spaces. In what follows we write
\[
a(\xi):=\xi(a)
\]
for every $a\in SO(\mR_+)$ and every $\xi\in\Delta$.
\begin{lem}[{\cite[Proposition~2.2]{K08}}]
\label{le:SO-fundamental-property}
Let $\{a_k\}_{k=1}^\infty$ be a countable subset of $SO(\mR_+)$ and
$s\in\{0,\infty\}$. For each $\xi\in M_s(SO(\mR_+))$ there exists a
sequence $\{t_n\}_{n\in\mN}\subset\mR_+$ such that $t_n\to s$ as $n\to\infty$ and
\begin{equation}\label{eq:SO-fundamental-property}
a_k(\xi)=\lim_{n\to\infty}a_k(t_n)\quad\mbox{for all}\quad k\in\mN.
\end{equation}
Conversely, if $\{t_n\}_{n\in\mN}\subset\mR_+$ is a sequence such that $t_n\to s$
as $n\to\infty$, then there exists a functional $\xi\in M_s(SO(\mR_+))$
such that \eqref{eq:SO-fundamental-property} holds.
\end{lem}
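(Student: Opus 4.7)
The plan is to prove the two implications separately, leveraging the identification $\Delta=(\operatorname{clos}_{SO^*}\mR_+)\setminus\mR_+$ stated just before the lemma: each $\xi\in M_s(SO(\mR_+))$ is a weak-star limit of evaluation functionals $\delta_t$ with $t\in\mR_+$.

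For the forward direction, I would fix an auxiliary function $\varphi_s\in C(\overline{\mR}_+)\subset SO(\mR_+)$ with $\varphi_s(s)=0$ and $\varphi_s(t)>0$ for $t\in\mR_+$ (say $\varphi_0(t)=t/(1+t)$ and $\varphi_\infty(t)=1/(1+t)$). Since $\xi|_{C(\overline{\mR}_+)}=s$, we have $\varphi_s(\xi)=0$. At stage $n$, the set
$$U_n=\{t\in\mR_+:|a_k(t)-a_k(\xi)|<1/n\text{ for }k=1,\ldots,n,\ \varphi_s(t)<1/n\}$$
is precisely the trace on $\mR_+$ of a basic weak-star neighborhood of $\xi$; by weak-star density it is nonempty, and picking $t_n\in U_n$ yields a sequence with $a_k(t_n)\to a_k(\xi)$ for every fixed $k$, while the condition $\varphi_s(t_n)<1/n$ forces $t_n\to s$ in the standard topology of $\mR_+$ (for instance $\varphi_0(t)<1/n$ gives $t<1/(n-1)$).

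For the converse direction, given any $\{t_n\}\subset\mR_+$ with $t_n\to s$, the sequences $\{a_k(t_n)\}_n$ are uniformly bounded in $\mC$, so a Cantor diagonal extraction produces a subsequence $\{t_{n_j}\}$ along which $c_k:=\lim_j a_k(t_{n_j})$ exists for every $k$. The functionals $\delta_{t_{n_j}}$ lie in the weak-star compact space $M(SO(\mR_+))$, hence admit a cluster point $\xi\in M(SO(\mR_+))$, and $a_k(\xi)=c_k$ for all $k$ because cluster points of a convergent scalar sequence coincide with its limit. Evaluating $\xi$ on any $f\in C(\overline{\mR}_+)$ gives $f(\xi)=\lim_j f(t_{n_j})=f(s)$ by ordinary continuity on the one-point compactification $\overline{\mR}_+$, so $\xi\in M_s(SO(\mR_+))$ and \eqref{eq:SO-fundamental-property} holds along the subsequence (which is the standard reading of the converse, since otherwise the right-hand side need not even exist along the full sequence).

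The main obstacle lies in the forward direction: producing an actual sequence $\{t_n\}$ that handles the countable family $\{a_k\}$ simultaneously \emph{and} converges to $s$ in the Euclidean topology of $\mR_+$. The countability of $\{a_k\}$ is what allows the diagonal construction to yield a sequence rather than a net, and the device that encodes Euclidean proximity to $s$ as a single additional weak-star condition is the separating function $\varphi_s\in C(\overline{\mR}_+)$ with $\varphi_s(\xi)=0$, whose existence relies on the inclusion $C(\overline{\mR}_+)\subset SO(\mR_+)$ recalled earlier.
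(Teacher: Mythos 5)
Your proof is correct, but note that the paper does not prove this lemma at all: it is quoted from \cite[Proposition~2.2]{K08}, so there is no internal argument to compare against. What you wrote is essentially the standard proof of that cited result, resting on exactly the fact the paper records immediately before the statement, namely $\Delta=(\operatorname{clos}_{SO^*}\mR_+)\setminus\mR_+$. In the forward direction, the basic weak-star neighbourhoods of $\xi$ determined by $a_1,\dots,a_n$ together with the auxiliary function $\varphi_s\in C(\overline{\mR}_+)$ (legitimate because $C(\overline{\mR}_+)\subset SO(\mR_+)$ and $\xi(\varphi_s)=\varphi_s(s)=0$) must meet the set of evaluation functionals $\{\delta_t:t\in\mR_+\}$, and your choice $t_n\in U_n$ indeed gives both $t_n\to s$ and $a_k(t_n)\to a_k(\xi)$ for each fixed $k$; in the converse direction, boundedness of each $a_k$, a diagonal subsequence, and a cluster point of the $\delta_{t_{n_j}}$ in the compact space $M(SO(\mR_+))$ produce $\xi$, which lies in the fiber $M_s(SO(\mR_+))$ because $f(t_{n_j})\to f(s)$ for every $f\in C(\overline{\mR}_+)$. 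Your reading of the converse is also the right one: since functions in $SO(\mR_+)$ need not have limits along an arbitrary sequence $t_n\to s$, \eqref{eq:SO-fundamental-property} can only be guaranteed along a subsequence, or equivalently under the tacit assumption that the limits $\lim_{n\to\infty}a_k(t_n)$ exist; in the latter case your cluster-point argument yields the equality along the whole sequence, which is the form in which the lemma is actually used in the paper, for instance in the proof of Lemma~\ref{le:Fredholmness-H}.
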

\subsection{Slowly Oscillating Functions and Shifts}
Repeating literally the proof of \cite[Proposition~3.3]{KKL03}, we obtain the
following statement.
\begin{lem}\label{le:SO-nec}
Suppose $\varphi\in C^1(\mR_+)$ and put $\psi(t):=t\varphi'(t)$
for $t\in\mR_+$. If $\varphi,\,\psi\in SO(\mR_+)$, then
\[
\lim_{t\to s}\psi(t)=0
\quad\mbox{for}\quad
s\in\{0,\infty\}.
\]
\end{lem}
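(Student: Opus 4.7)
The plan is to use the fundamental theorem of calculus to tie $\psi$ to the oscillation of $\varphi$, and then exploit the slow oscillation of $\psi$ itself to turn an integral mean value into a pointwise statement.

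Fix $\lambda\in(0,1)$ and $r\in\mR_+$. Since $\varphi\in C^1(\mR_+)$ and $\varphi'(t)=\psi(t)/t$, the fundamental theorem of calculus gives
\[
\varphi(r)-\varphi(\lambda r)
=\int_{\lambda r}^{r}\frac{\psi(t)}{t}\,dt
=\int_{\lambda}^{1}\frac{\psi(ru)}{u}\,du,
\]
after the substitution $t=ru$. Because $\varphi\in SO(\mR_+)$, the definition of slow oscillation applied to the pair $r,\lambda r\in[\lambda r,r]$ yields $\varphi(r)-\varphi(\lambda r)\to 0$ as $r\to s$ for $s\in\{0,\infty\}$. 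So the right-hand side above also tends to $0$.

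Now I would use that $\psi\in SO(\mR_+)$ to show that for fixed $\lambda$ the integrand is uniformly close to $\psi(r)/u$. Indeed, since $ru\in[\lambda r,r]$ for $u\in[\lambda,1]$, slow oscillation of $\psi$ gives
\[
\sup_{u\in[\lambda,1]}|\psi(ru)-\psi(r)|
\le\sup_{t,\tau\in[\lambda r,r]}|\psi(t)-\psi(\tau)|
\longrightarrow 0
\]
as $r\to s$. Combined with the previous identity,
\[
\psi(r)\int_{\lambda}^{1}\frac{du}{u}
=\psi(r)\log(1/\lambda)
=\int_{\lambda}^{1}\frac{\psi(r)-\psi(ru)}{u}\,du
+\int_{\lambda}^{1}\frac{\psi(ru)}{u}\,du
\longrightarrow 0
\]
as $r\to s$, because both integrals on the right tend to $0$ (the first by the uniform estimate just established, the second by the $\varphi$-computation). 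Since $\log(1/\lambda)\ne 0$, this forces $\psi(r)\to 0$ as $r\to s$, completing the proof.

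No real obstacle here: the only care needed is to keep the two uses of slow oscillation (for $\varphi$ and for $\psi$) distinct, and to note that the substitution $t=ru$ uncouples the integration variable from $r$ so that slow oscillation of $\psi$ can be applied uniformly on the fixed interval $[\lambda,1]$.
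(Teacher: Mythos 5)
Your proof is correct, and it is essentially the argument the paper relies on: the paper gives no inline proof but defers to \cite[Proposition~3.3]{KKL03}, whose proof rests on the same identity $\varphi(r)-\varphi(\lambda r)=\int_\lambda^1\psi(ru)\,du/u$ combined with the slow oscillation of $\varphi$ (making the left side vanish) and of $\psi$ (freezing the integrand at $\psi(r)$), which forces $\psi(r)\log(1/\lambda)\to 0$.
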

\begin{lem}[{\cite[Lemma~2.2]{KKL11a}}]
\label{le:exponent-shift}
An orientation-preserving shift $\alpha:\mR_+\to\mR_+$ belongs to $SOS(\mR_+)$
if and only if
\[
\alpha(t)=te^{\omega (t)},\quad t\in \mR_+,
\]
for some real-valued function $\omega\in SO(\mR_+)\cap C^1(\mR_+)$ such that
the function $t\mapsto t\omega^\prime(t)$ also belongs to $SO(\mR_+)$ and
$\inf_{t\in\mR_+}\big(1+t\omega'(t)\big)>0$.
\end{lem}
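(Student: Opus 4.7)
The characterization is a calculus-style translation between $\alpha$ and $\omega(t):=\log(\alpha(t)/t)$, so the plan is to prove the two implications separately, with this substitution built in.

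For the \emph{sufficiency} direction, I would start from $\alpha(t)=te^{\omega(t)}$ with $\omega\in SO(\mR_+)\cap C^1(\mR_+)$, $t\omega'(t)\in SO(\mR_+)$, and $\inf_{t\in\mR_+}(1+t\omega'(t))>0$. A direct differentiation gives
\[
\alpha'(t)=e^{\omega(t)}\bigl(1+t\omega'(t)\bigr).
\]
Since functions in $SO(\mR_+)$ are bounded continuous, both $\omega$ and $t\omega'(t)$ are bounded, and combined with the infimum hypothesis $\alpha'$ is bounded above and below by positive constants, so $\log\alpha'\in L^\infty(\mR_+)$. Because $SO(\mR_+)$ is a $C^*$-algebra, $e^{\omega}\in SO(\mR_+)$, and $\alpha'$ is the product of two $SO(\mR_+)$-functions, hence lies in $SO(\mR_+)$. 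This gives $\alpha\in SOS(\mR_+)$.

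For the \emph{necessity} direction, I would assume $\alpha\in SOS(\mR_+)$ and set $\omega(t):=\log(\alpha(t)/t)$. Boundedness of $\log\alpha'$ yields $\alpha'(s)\in[m,M]$ with $0<m\le M<\infty$; using $\alpha(0^+)=0$ (the endpoint fixed point), we have $\alpha(t)=\int_0^t\alpha'(s)\,ds$ and hence $\alpha(t)/t\in[m,M]$ for every $t>0$. In particular $\omega$ is bounded and
\[
1+t\omega'(t)=\frac{t\alpha'(t)}{\alpha(t)}=\alpha'(t)\,e^{-\omega(t)}\ge\frac{m}{M}>0,
\]
taking care of the infimum condition. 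Once $\omega$ is shown to lie in $SO(\mR_+)$, the identity above exhibits $1+t\omega'(t)$ as a product of two $SO(\mR_+)$-functions, whence $t\omega'(t)\in SO(\mR_+)$ as required.

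The main obstacle is therefore verifying $\omega\in SO(\mR_+)$, equivalently that $t\mapsto\alpha(t)/t$ is slowly oscillating. To handle this I would pass to the logarithmic variable $x:=\log t$; setting $g(x):=\alpha'(e^x)$, slow oscillation of $\alpha'$ on $\mR_+$ translates to slow oscillation of $g$ at $\pm\infty$ on the real line (the windows $[\lambda r,r]$ correspond to intervals of fixed length $-\log\lambda$ on $\mR$), and the substitution $s=e^{x+v}$ gives
\[
\frac{\alpha(e^x)}{e^x}=\frac{1}{e^x}\int_0^{e^x}\alpha'(s)\,ds=\int_{-\infty}^{0}g(x+v)\,e^v\,dv.
\]
This realises $\alpha/t$ as the convolution on $\mR$ of the bounded slowly oscillating function $g$ with the $L^1$ kernel $K(v)=e^v$ on $(-\infty,0]$. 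Slow oscillation is preserved by such convolutions by a standard split-the-integral argument: on a truncated window $v\in[-A,0]$ the integrand $|g(x+h+v)-g(x+v)|$ (for a bounded increment $h$) tends to $0$ as $x\to\pm\infty$ by slow oscillation of $g$, while the tail $\int_{-\infty}^{-A}e^v\,dv=e^{-A}$ is small independently of $x$ for large $A$. Since $\log$ is Lipschitz on the bounded interval $[m,M]$, this yields $\omega\in SO(\mR_+)$ and completes the proof.
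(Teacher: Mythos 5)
Your proposal is correct, but there is nothing in this paper to compare it with: the statement is imported verbatim from \cite[Lemma~2.2]{KKL11a} and is used here as a quoted tool, with no proof given. Judged on its own terms, your argument is sound and self-contained. The sufficiency direction is the routine computation $\alpha'(t)=e^{\omega(t)}(1+t\omega'(t))$ together with the facts that $SO(\mR_+)$ is an algebra stable under composition with Lipschitz functions on bounded ranges, which gives both $\alpha'\in SO(\mR_+)$ and the two-sided bounds on $\alpha'$, hence $\log\alpha'\in L^\infty(\mR_+)$. In the necessity direction you correctly isolate the only nontrivial point, namely $\omega=\log(\alpha(t)/t)\in SO(\mR_+)$, and your convolution trick handles it cleanly: writing $\alpha(e^x)/e^x=\int_{-\infty}^0 g(x+v)e^v\,dv$ with $g(x)=\alpha'(e^x)$ uses $\alpha(0^+)=0$, which is legitimate because an orientation-preserving diffeomorphism of $\mR_+$ onto itself fixing $0$ and $\infty$ is increasing with infimum $0$, and the split-the-integral estimate does preserve slow oscillation; note that you implicitly use windows of arbitrary fixed length $A+2L$ in the logarithmic variable, which is allowed since the definition of $SO(\mR_+)$ is independent of the choice of $\lambda\in(0,1)$. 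The remaining identities, $1+t\omega'(t)=\alpha'(t)e^{-\omega(t)}\ge m/M>0$ and the resulting membership $t\omega'(t)\in SO(\mR_+)$ as a product of $SO$ functions minus a constant, are correct. Whether this coincides with the route taken in \cite{KKL11a} cannot be checked from the present text, but your averaging argument is an attractive, elementary way to obtain the key step.
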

\begin{lem}[{\cite[Lemma~2.3]{KKL11a}}]
\label{le:composition}
If $c\in SO(\mR_+)$ and $\alpha\in SOS(\mR_+)$, then $c\circ\alpha$ belongs to
$SO(\mR_+)$ and
\[
\lim_{t\to s}(c(t)-c[\alpha(t)])=0
\quad\mbox{for}\quad
s\in\{0,\infty\}.
\]
\end{lem}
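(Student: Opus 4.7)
The plan is to invoke the representation $\alpha(t)=te^{\omega(t)}$ provided by Lemma~\ref{le:exponent-shift}, and to begin by observing that $\omega$ itself is bounded on $\mR_+$. This boundedness follows by rewriting $\log\alpha'(t)=\omega(t)+\log(1+t\omega'(t))$: the left-hand side is bounded by the definition of $SOS(\mR_+)$, while $\log(1+t\omega'(t))$ is bounded since $t\omega'(t)\in SO(\mR_+)$ is bounded and $1+t\omega'(t)$ stays uniformly away from zero; hence $\omega$ is bounded as well. I set $M:=\sup_{t\in\mR_+}|\omega(t)|$.

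For the first assertion, $c\circ\alpha$ is obviously bounded and continuous, so only the slow-oscillation estimate needs verification. Fix $\lambda\in(0,1)$ and $s\in\{0,\infty\}$. Since $\alpha$ is an orientation-preserving homeomorphism of $\mR_+$ onto itself with fixed points $0$ and $\infty$, the image $\alpha([\lambda r,r])$ equals $[\alpha(\lambda r),\alpha(r)]$, and
\[
\frac{\alpha(\lambda r)}{\alpha(r)}=\lambda\,e^{\omega(\lambda r)-\omega(r)},
\]
which tends to $\lambda$ as $r\to s$ by the slow oscillation of $\omega$ applied on $[\lambda r,r]$. Consequently $\alpha([\lambda r,r])\subset[(\lambda/2)\alpha(r),\alpha(r)]$ for all $r$ sufficiently close to $s$, so
\[
\sup_{t,\tau\in[\lambda r,r]}|c(\alpha(t))-c(\alpha(\tau))|
\leq
\sup_{u,v\in[(\lambda/2)\alpha(r),\alpha(r)]}|c(u)-c(v)|,
\]
and the right-hand side vanishes as $r\to s$ since $c\in SO(\mR_+)$ and $\alpha(r)\to s$.

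For the limit $c(t)-c(\alpha(t))\to 0$, the case $M=0$ (when $\alpha$ is the identity) is trivial, so I assume $M>0$. The two-sided estimate $e^{-M}t\leq\alpha(t)\leq e^M t$ then shows that, setting $r:=\max\{t,\alpha(t)\}$ and $\lambda:=e^{-M}\in(0,1)$, both $t$ and $\alpha(t)$ lie in $[\lambda r,r]$ while $r\to s$ as $t\to s$. Hence $|c(t)-c(\alpha(t))|$ is dominated by $\sup_{u,v\in[\lambda r,r]}|c(u)-c(v)|$, which tends to zero by the slow oscillation of $c$.

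The only step that is not wholly mechanical is extracting the boundedness of $\omega$ from the data: this is not written explicitly into the definition of $SOS(\mR_+)$ and must be pulled out of the boundedness of $\log\alpha'$ using the identity above together with Lemma~\ref{le:exponent-shift}. Once $|\omega|\leq M$ is secured, both conclusions amount to routine applications of the defining slow-oscillation inequality of $c$ on a single interval of the form $[\lambda r,r]$.
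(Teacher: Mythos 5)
Your argument is correct. Note, however, that the paper does not prove this lemma at all: it is quoted verbatim from \cite[Lemma~2.3]{KKL11a}, so there is no internal proof to compare with; your write-up is a sound direct verification from the definitions and Lemma~\ref{le:exponent-shift}. Both steps check out: the inclusion $\alpha([\lambda r,r])\subset[(\lambda/2)\alpha(r),\alpha(r)]$ for $r$ near $s$ (using $\alpha(\lambda r)/\alpha(r)=\lambda e^{\omega(\lambda r)-\omega(r)}\to\lambda$) correctly reduces the slow oscillation of $c\circ\alpha$ to that of $c$ with parameter $\lambda/2$, and the choice $r=\max\{t,\alpha(t)\}$, $\lambda=e^{-M}$ correctly places $t$ and $\alpha(t)$ in a single interval $[\lambda r,r]$ for the second assertion. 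One small simplification: the preliminary extraction of the bound $M$ from $\log\alpha'(t)=\omega(t)+\log(1+t\omega'(t))$ is unnecessary, since the paper's definition of $SO(\mR_+)$ already requires its elements to be bounded, so $\omega\in SO(\mR_+)$ from Lemma~\ref{le:exponent-shift} gives $M<\infty$ at once (your derivation is nonetheless valid, and also harmless in the degenerate case $M=0$, which in fact cannot occur since the identity map has more than the two fixed points $0$ and $\infty$).
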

For an orientation-preserving diffeomorphism $\alpha:\mR_+\to\mR_+$, put
\[
\alpha_0(t):=t,
\quad
\alpha_i(t):=\alpha[\alpha_{i-1}(t)],
\quad
i\in\mZ,
\quad
t\in\mR_+.
\]
\begin{lem}[{\cite[Corollary~2.5]{KKL14a}}]
\label{le:iterations}
If $\alpha,\beta\in SOS(\mR_+)$, then $\alpha_i\circ\beta_j\in SOS(\mR_+)$
for all $i,j\in\mZ$.
\end{lem}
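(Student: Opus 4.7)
The plan is to reduce the lemma to two base cases: closure of $SOS(\mR_+)$ under composition (namely, $\alpha,\beta\in SOS(\mR_+)$ implies $\alpha\circ\beta\in SOS(\mR_+)$) and under inversion (namely, $\alpha\in SOS(\mR_+)$ implies $\alpha_{-1}\in SOS(\mR_+)$). Once these are established, an easy induction on $|i|+|j|$ finishes the proof, since $\alpha_i\circ\beta_j$ is a finite composition of elements of $\{\alpha,\alpha_{-1},\beta,\beta_{-1}\}\subset SOS(\mR_+)$.

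For the composition step, I would use Lemma~\ref{le:exponent-shift} to write $\alpha(t)=te^{\omega_\alpha(t)}$ and $\beta(t)=te^{\omega_\beta(t)}$. A direct calculation yields $(\alpha\circ\beta)(t)=te^{\omega_\gamma(t)}$ with
\[
\omega_\gamma(t):=\omega_\beta(t)+\omega_\alpha(\beta(t)).
\]
Since $\omega_\alpha\in SO(\mR_+)$ and $\beta\in SOS(\mR_+)$, Lemma~\ref{le:composition} gives $\omega_\alpha\circ\beta\in SO(\mR_+)$, so $\omega_\gamma\in SO(\mR_+)\cap C^1(\mR_+)$. Differentiating and using $t\beta'(t)=\beta(t)(1+t\omega_\beta'(t))$, one obtains
\[
t\omega_\gamma'(t)=t\omega_\beta'(t)+\bigl(1+t\omega_\beta'(t)\bigr)\,\psi_\alpha(\beta(t)),
\]
where $\psi_\alpha(s):=s\omega_\alpha'(s)\in SO(\mR_+)$. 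A second application of Lemma~\ref{le:composition}, together with the $C^*$-algebra property of $SO(\mR_+)$, shows $t\omega_\gamma'\in SO(\mR_+)$. Boundedness of $\log(\alpha\circ\beta)'$ is immediate from the chain rule and boundedness of $\log\alpha'$ and $\log\beta'$, while $\inf_t(1+t\omega_\gamma'(t))>0$ follows from the identity $(\alpha\circ\beta)'(t)=e^{\omega_\gamma(t)}\bigl(1+t\omega_\gamma'(t)\bigr)$ combined with the positive lower bound of $(\alpha\circ\beta)'$. Lemma~\ref{le:exponent-shift} in its reverse direction then delivers $\alpha\circ\beta\in SOS(\mR_+)$.

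For the inversion step, writing $\alpha_{-1}(s)=se^{\omega_{-1}(s)}$ and using $s=\alpha(\alpha_{-1}(s))$ forces $\omega_{-1}=-\omega_\alpha\circ\alpha_{-1}$. Since $\alpha_{-1}'(s)=1/\alpha'(\alpha_{-1}(s))$ is bounded between positive constants, $\alpha_{-1}$ is bi-Lipschitz on the multiplicative scale, so $\alpha_{-1}([\lambda r,r])$ is contained in an interval $[\mu R,R]$ with $R=\alpha_{-1}(r)$ and $\mu$ depending only on $\lambda$ and the bounds of $\log\alpha'$. This lets me verify the definition of $SO(\mR_+)$ for $\omega_{-1}$ directly, bypassing Lemma~\ref{le:composition}. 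A parallel argument handles $t\omega_{-1}'(t)$ through the formula $\omega_{-1}'(s)=-\omega_\alpha'(\alpha_{-1}(s))\,\alpha_{-1}'(s)$, and the bound $\inf_s(1+s\omega_{-1}'(s))>0$ follows by differentiating $s=\alpha_{-1}(s)\,e^{\omega_\alpha(\alpha_{-1}(s))}$. Lemma~\ref{le:exponent-shift} again yields $\alpha_{-1}\in SOS(\mR_+)$.

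The main obstacle I expect is the inversion step: Lemma~\ref{le:composition} cannot be invoked on $\omega_\alpha\circ\alpha_{-1}$, because it presupposes that the inner shift already lies in $SOS(\mR_+)$, which is precisely what we are trying to establish. One must therefore verify slow oscillation for $\omega_{-1}$ from first principles, exploiting the fact that $\alpha_{-1}$ maps each multiplicative scaling window $[\lambda r,r]$ into another scaling window of comparable width.
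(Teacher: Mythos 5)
The paper itself offers no proof of this lemma: it is imported verbatim from \cite[Corollary~2.5]{KKL14a}, so your self-contained argument is by necessity a different route, and it is essentially sound. Your reduction to two base cases (closure of $SOS(\mR_+)$ under composition and under inversion) plus induction is the natural way to get the statement, and your composition step uses exactly the tools this paper supplies: the identities $\omega_\gamma=\omega_\beta+\omega_\alpha\circ\beta$ and $t\omega_\gamma'(t)=t\omega_\beta'(t)+(1+t\omega_\beta'(t))\,\psi_\alpha(\beta(t))$ are correct, and Lemma~\ref{le:exponent-shift} together with Lemma~\ref{le:composition} then gives $\alpha\circ\beta\in SOS(\mR_+)$. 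You are also right that the inversion step is the delicate one: Lemma~\ref{le:composition} cannot be applied to $\omega_\alpha\circ\alpha_{-1}$ before $\alpha_{-1}\in SOS(\mR_+)$ is known, and your window argument ($\alpha_{-1}$ maps $[\lambda r,r]$ into $[\mu R,R]$ with $R=\alpha_{-1}(r)$ and $\mu$ controlled by $\lambda$, the bounds of $\log\alpha'$ and, strictly speaking, also the bound of $\omega_\alpha$) is exactly what is needed to verify slow oscillation of $\omega_{-1}=-\omega_\alpha\circ\alpha_{-1}$ and of $s\omega_{-1}'(s)=-\frac{s\alpha_{-1}'(s)}{\alpha_{-1}(s)}\,\psi_\alpha(\alpha_{-1}(s))$ from first principles; the remaining conditions ($\log\alpha_{-1}'$ bounded, $\inf_s(1+s\omega_{-1}'(s))>0$) follow as you indicate. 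This matches in spirit the argument of the cited source, which the present paper merely quotes; what your version buys is independence from that reference, at the cost of redoing the window estimates. One caveat, which is a defect of the statement as transcribed here rather than of your proof: under this paper's literal definition of a shift (only fixed points $0$ and $\infty$), degenerate cases such as $\alpha_0\circ\beta_0=\operatorname{id}$, or $\alpha\circ\beta$ when $\alpha$ and $\beta$ move points in opposite directions (e.g.\ $\beta=\alpha_{-1}$), may have additional fixed points, so ``closure under composition'' can only be asserted for the analytic conditions of Lemma~\ref{le:exponent-shift}; in the situations where the paper actually invokes the lemma (iterates of a single shift and inverses), this issue does not arise.
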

\begin{lem}\label{le:inverse-shift-fibers}
If $\alpha\in SOS(\mR_+)$, then
\[
\omega(t):=\log[\alpha(t)/t],
\quad
\widetilde{\omega}(t):=\log[\alpha_{-1}(t)/t],
\quad
t\in\mR_+,
\]
are slowly oscillating functions such that $\omega(\xi)=-\widetilde{\omega}(\xi)$
for all $\xi\in\Delta$.
\end{lem}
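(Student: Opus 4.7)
The plan is to treat the two claims separately: first the membership of $\omega$ and $\widetilde\omega$ in $SO(\mR_+)$, and then the fiber identity. For the first part, I will apply Lemma \ref{le:exponent-shift} directly to $\alpha$, which yields $\alpha(t)=te^{\omega(t)}$ with $\omega\in SO(\mR_+)$; this immediately gives $\omega\in SO(\mR_+)$. To handle $\widetilde\omega$, I will first invoke Lemma \ref{le:iterations} with $i=-1$ and $j=0$ (taking $\beta=\alpha$) to conclude that $\alpha_{-1}\in SOS(\mR_+)$, and then apply Lemma \ref{le:exponent-shift} again, this time to $\alpha_{-1}$, to obtain $\widetilde\omega\in SO(\mR_+)$.

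For the fiber identity, the strategy is to convert it into a limit statement on $\mR_+$ via Lemma \ref{le:SO-fundamental-property}. Differentiating the relation $\alpha_{-1}(\alpha(t))=t$ is not needed; instead, taking logarithms and subtracting $\log t$ yields the exact pointwise identity
\[
\widetilde\omega(\alpha(t))+\omega(t)=0,\quad t\in\mR_+.
\]
By Lemma \ref{le:composition}, applied to the slowly oscillating function $\widetilde\omega$ and the shift $\alpha$, we have
\[
\lim_{t\to s}\bigl(\widetilde\omega(t)-\widetilde\omega(\alpha(t))\bigr)=0,\quad s\in\{0,\infty\}.
\]
Combining these two displays gives $\omega(t)+\widetilde\omega(t)\to 0$ as $t\to s$ for $s\in\{0,\infty\}$.

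To finish, fix $\xi\in\Delta=M_0(SO(\mR_+))\cup M_\infty(SO(\mR_+))$ and let $s\in\{0,\infty\}$ be the point such that $\xi\in M_s(SO(\mR_+))$. Apply Lemma \ref{le:SO-fundamental-property} to the countable family $\{\omega,\widetilde\omega\}$ to extract a sequence $t_n\to s$ with $\omega(\xi)=\lim_{n\to\infty}\omega(t_n)$ and $\widetilde\omega(\xi)=\lim_{n\to\infty}\widetilde\omega(t_n)$. Passing to the limit in $\omega(t_n)+\widetilde\omega(t_n)\to 0$ yields $\omega(\xi)+\widetilde\omega(\xi)=0$, as required. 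The main conceptual obstacle is that the desired identity $\omega(t)+\widetilde\omega(t)=0$ does \emph{not} hold pointwise on $\mR_+$; the honest pointwise identity involves the shift, and the shift-invariance of slowly oscillating functions at the ends of $\mR_+$ (Lemma \ref{le:composition}) is exactly what bridges the gap and makes the identity true on the fibers $M_s(SO(\mR_+))$.
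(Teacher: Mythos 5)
Your proof is correct and follows essentially the same route as the paper: establish $\omega,\widetilde\omega\in SO(\mR_+)$ via Lemmas \ref{le:iterations} and \ref{le:exponent-shift}, derive an exact pointwise identity involving the shift, use Lemma \ref{le:composition} to get $\omega(t)+\widetilde\omega(t)\to 0$ at $0$ and $\infty$, and then pass to the fibers via Lemma \ref{le:SO-fundamental-property}. The only (immaterial) difference is that you use $\widetilde\omega(\alpha(t))=-\omega(t)$ and apply Lemma \ref{le:composition} to $\widetilde\omega$ with the shift $\alpha$, whereas the paper uses the symmetric identity $\widetilde\omega(t)=-\omega(\alpha_{-1}(t))$ and applies it to $\omega$ with the shift $\alpha_{-1}$.
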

\begin{proof}
From Lemma~\ref{le:iterations} with $i=-1$ and $j=0$ it follows that
$\alpha_{-1}$ belongs to $SOS(\mR_+)$.
Then, by Lemma~\ref{le:exponent-shift}, $\omega,\widetilde{\omega}\in SO(\mR_+)$.
It is easy to see that
\[
\widetilde{\omega}(t)
=
\log\frac{\alpha_{-1}(t)}{t}
=
-\log\frac{t}{\alpha_{-1}(t)}
=
-\log\frac{\alpha[\alpha_{-1}(t)]}{\alpha_{-1}(t)}
=
-\omega[\alpha_{-1}(t)]
\]
for all $t\in\mR_+$. Hence, from Lemma~\ref{le:composition} it follows that
$\omega\circ\alpha_{-1}\in SO(\mR_+)$ and
\begin{equation}\label{eq:inverse-shift-fibers-1}
\lim_{t\to s}(\omega(t)+\widetilde{\omega}(t))=\lim_{t\to s}(\omega(t)-\omega[\alpha_{-1}(t)])=0,
\quad s\in\{0,\infty\}.
\end{equation}
Fix $s\in\{0,\infty\}$ and $\xi\in M_s(SO(\mR_+))$. By Lemma~\ref{le:SO-fundamental-property},
there is a sequence $\{t_j\}_{j\in\mN}\subset\mR_+$ such that $t_j\to s$ and
\begin{equation}\label{eq:inverse-shift-fibers-2}
\omega(\xi)=\lim_{j\to\infty}\omega(t_j),
\quad
\widetilde{\omega}(\xi)=\lim_{j\to\infty}\widetilde{\omega}(t_j).
\end{equation}
From \eqref{eq:inverse-shift-fibers-1}--\eqref{eq:inverse-shift-fibers-2} we obtain
\[
\omega(\xi)=\lim_{j\to\infty}\omega(t_j)-\lim_{j\to\infty}(\omega(t_j)+\widetilde{\omega}(t_j))
=
-\lim_{j\to\infty}\widetilde{\omega}(t_j)=-\widetilde{\omega}(\xi),
\]
which completes the proof.
\end{proof}
\subsection{Invertibility of Binomial Functional Operators}
From \cite[Theorem~1.1]{KKL11a} we immediately get the following.
\begin{lem}\label{le:FO}
Suppose $c\in SO(\mR_+)$ and $\alpha\in SOS(\mR_+)$. If $1\gg c$, then the
functional operator $I-cU_\alpha$ is invertible on the
space $L^p(\mR_+)$ and
\[
(I-cU_\alpha)^{-1}=\sum_{n=0}^\infty (cU_\alpha)^n.
\]
\end{lem}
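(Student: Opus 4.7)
The plan is to recognize the claim as a direct specialization of \cite[Theorem~1.1]{KKL11a}. That result provides an invertibility criterion for binomial functional operators of the form $aI - bU_\alpha$ with $a,b\in SO(\mR_+)$ and $\alpha\in SOS(\mR_+)$ acting on $L^p(\mR_+)$, phrased in terms of the values of the coefficients on the fibers $M_s(SO(\mR_+))$, $s\in\{0,\infty\}$. Taking $a=1$ and $b=c$, the criterion reduces precisely to a uniform estimate $|c(\xi)|<1$ for all $\xi\in\Delta$. By Lemma~\ref{le:SO-fundamental-property}, for each $\xi\in M_s(SO(\mR_+))$ one has $c(\xi)=\lim_{n\to\infty}c(t_n)$ along some sequence $t_n\to s$, so the hypothesis $1\gg c$ translates into exactly the required uniform bound on $\Delta$.

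For the Neumann series representation, the key point is that the spectral radius of $cU_\alpha$ on $L^p(\mR_+)$ is strictly less than $1$. Using the intertwining $U_\alpha c=(c\circ\alpha)U_\alpha$ and the fact that $U_\alpha$ is an isometric isomorphism of $L^p(\mR_+)$, one computes
\[
\|(cU_\alpha)^n\|=\bigg\|\prod_{i=0}^{n-1} c\circ\alpha_i\bigg\|_\infty.
\]
By Lemma~\ref{le:iterations} each iterate $\alpha_i$ lies in $SOS(\mR_+)$, and by Lemma~\ref{le:composition} each factor $c\circ\alpha_i$ lies in $SO(\mR_+)$ with $\lim_{t\to s}(c(t)-c[\alpha_i(t)])=0$ for $s\in\{0,\infty\}$. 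Combining this boundary-closeness with the hypothesis $1\gg c$ yields a uniform geometric decay of the above product, and hence $r(cU_\alpha)<1$, so $\sum_{n=0}^{\infty}(cU_\alpha)^n$ converges in operator norm to $(I-cU_\alpha)^{-1}$.

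The only potentially delicate point is obtaining the uniform geometric decay on the interior of $\mR_+$: the condition $1\gg c$ controls $|c|$ only near $\{0,\infty\}$, so one needs the dynamics of $\alpha$ to drive every orbit $\{\alpha_i(t)\}$ into the favorable region $|c|<1-\delta$. This is automatic because $0$ and $\infty$ are the only fixed points of the orientation-preserving diffeomorphism $\alpha$, so iterates $\alpha_i(t)$ tend to one of them as $i\to\pm\infty$. This is exactly the mechanism packaged inside \cite[Theorem~1.1]{KKL11a}, from which the present lemma follows at once without additional work.
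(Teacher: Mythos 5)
Your proposal is correct and takes essentially the same route as the paper: the paper's entire proof is the observation that the lemma follows immediately from \cite[Theorem~1.1]{KKL11a}, which is exactly your main step. The extra Neumann-series sketch you add (norm of $(cU_\alpha)^n$ via the intertwining $U_\alpha c=(c\circ\alpha)U_\alpha$ and the escape of orbits from any compact subinterval of $\mR_+$) is sound and is precisely the mechanism the paper itself spells out later, following \cite[Lemma~2.1]{KKL03}, when proving $\limsup_{n\to\infty}\|a_n\|_{C_b(\mR_+)}^{1/n}<1$ in the proof of Lemma~\ref{le:A-inverse-R}.
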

\subsection{Compactness of Commutators of SIO's and FO's}
Let $\fB$ be a Banach algebra and $\fS$ be a subset of $\fB$. We denote by
$\alg_\fB\fS$ the smallest closed subalgebra of $\fB$ containing $\fS$.
Then
\[
\cA=\alg_{\cB(L^p(\mR_+))}\{I,S\}
\]
is the algebra of singular integral operators (SIO's). Fix $\alpha,\beta\in SOS(\mR_+)$
and consider the Banach algebra of functional operators (FO's) with shifts and
slowly oscillating data defined by
\[
\mathcal{FO}_{\alpha,\beta}:=
\alg_{\cB(L^p(\mR_+))}\{U_\alpha,U_\alpha^{-1},U_\beta,U_\beta^{-1},aI:a\in SO(\mR_+)\}.
\]
\begin{lem}\label{le:compactness-commutators}
Let $\alpha,\beta\in SOS(\mR_+)$. If $A\in\mathcal{FO}_{\alpha,\beta}$ and $B\in\cA$,
then
\[
AB-BA\in\cK(L^p(\mR_+)).
\]
\end{lem}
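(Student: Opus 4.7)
The plan is a standard reduction to generators, exploiting that $\cK(L^p(\mR_+))$ is a closed two-sided ideal of $\cB(L^p(\mR_+))$ and that the Leibniz identities
\[
[A_1 A_2, B] = A_1[A_2,B]+[A_1,B]A_2,\qquad
[A, B_1 B_2] = [A, B_1]B_2 + B_1[A, B_2]
\]
hold. Consequently, the set $\mathcal{I}:=\{(A,B)\in \mathcal{FO}_{\alpha,\beta}\times\cA : AB-BA\in\cK(L^p(\mR_+))\}$ is closed under linear combinations, products in each variable, and limits in the product of operator norms, so it is enough to check compactness of $[A,B]$ on generating sets. The algebra $\cA$ is generated by $I$ and $S$, and commutators with $I$ vanish; the algebra $\mathcal{FO}_{\alpha,\beta}$ is generated by the multiplications $aI$ with $a\in SO(\mR_+)$ together with the weighted shifts $U_\gamma$ for $\gamma\in\{\alpha,\alpha_{-1},\beta,\beta_{-1}\}$, each of which lies in $SOS(\mR_+)$ by Lemma~\ref{le:iterations} (applied with $j=0$). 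The lemma therefore reduces to showing that $[aI,S]$ is compact for every $a\in SO(\mR_+)$ and that $[U_\gamma,S]$ is compact for every $\gamma\in SOS(\mR_+)$.

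For the first assertion I would realize both $aI$ and $S$ as Mellin pseudodifferential operators with symbols in $\widetilde{\cE}(\mR_+,V(\mR))$: the symbol of $aI$ is $a(t)$, independent of the dual variable $\lambda$ and slowly oscillating in $t$; the symbol of $S$ is a $V(\mR)$-function of $\lambda$ alone. The semi-commutator compactness theorem for Mellin PDOs in Section~\ref{sec:Mellin-PDO}, which is a Mellin-side reformulation of the Fourier PDO result of \cite{K06}, applies to exactly this complementary configuration and yields compactness of $[aI,S]$. For the second assertion one can either invoke the analogous Mellin PDO reasoning---the operators $U_\gamma$ and $U_\gamma S$ are representable as Mellin PDOs with symbols in $\widetilde{\cE}(\mR_+,V(\mR))$ via an argument of the type of \cite[Lemma~4.4]{KKL14a}---or directly cite the commutator compactness statements already established in the earlier papers \cite{KKL11a,KKL14a}.

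The main obstacle is the first assertion: for $a\in C(\overline{\mR}_+)$ the compactness of $[aI,S]$ is classical, but a slowly oscillating $a$ need not possess limits at $0$ or $\infty$, so uniform approximation by continuous-up-to-the-boundary functions is unavailable and the standard Gohberg--Krupnik-type commutator arguments do not apply directly. Circumventing this requires the Mellin pseudodifferential calculus with slowly oscillating symbols of limited smoothness---this is precisely the motivation for Section~\ref{sec:Mellin-PDO}---and once that calculus is in place, both assertions become instances of the same semi-commutator compactness result.
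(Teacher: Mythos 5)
Your overall scheme---reduce via the Leibniz identities and the fact that $\cK(L^p(\mR_+))$ is a closed two-sided ideal to the generators of $\mathcal{FO}_{\alpha,\beta}$ (and of $\cA$), then settle the generator cases---is exactly the structure of the paper's proof, which disposes of the generators by citing \cite[Corollary~6.4]{KKL11a} for $aB-BaI$ with $a\in SO(\mR_+)$, $B\in\cA$, and \cite[Lemma~2.7]{KKL14a} for $U_\gamma^{\pm 1}B-BU_\gamma^{\pm 1}$. Your in-house treatment of $[aI,S]$ is also legitimate within the paper's toolkit: $S=S_p=\Phi^{-1}\Co(s_p)\Phi$ by Lemma~\ref{le:alg-A}, the symbols $(t,x)\mapsto a(t)$ and $(t,x)\mapsto s_p(x)$ lie in $\widetilde{\cE}(\mR_+,V(\mR))$ by Lemma~\ref{le:g-sp-rp}, and Theorem~\ref{th:comp-semi-commutators-PDO} then gives compactness of the commutator; your reduction of the inverse shifts to $U_{\alpha_{-1}},U_{\beta_{-1}}$ with $\alpha_{-1},\beta_{-1}\in SOS(\mR_+)$ via Lemma~\ref{le:iterations} is correct as well.

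The one step that would fail is your first option for the shift generators: neither $U_\gamma$ nor $U_\gamma S$ can be realized as a Mellin pseudodifferential operator with symbol in $\widetilde{\cE}(\mR_+,V(\mR))$, nor even in $C_b(\mR_+,V(\mR))$. The candidate symbols are $(1+t\psi'(t))^{1/p}e^{i\psi(t)x}$ and $(1+t\psi'(t))^{1/p}e^{i\psi(t)x}s_y(x)$ with $\psi(t)=\log[\gamma(t)/t]$, and for every $t$ with $\psi(t)\ne 0$ these have infinite total variation in $x$ on $\mR$, since $|e^{i\psi(t)x}|=1$ and $|s_y(x)|\to 1$ as $x\to\pm\infty$. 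This is precisely why Lemma~\ref{le:shift-R-exact} (that is, \cite[Lemma~4.4]{KKL14a}) and Lemma~\ref{le:fb} always carry the factor $r_y(x)$, whose exponential decay makes $e^{i\psi(t)x}r_y(x)$ of uniformly bounded variation; an ``argument of the type of \cite[Lemma~4.4]{KKL14a}'' is therefore not available for $U_\gamma$ or $U_\gamma S$ alone. For these generators you must fall back on your second option, namely citing the already established commutator compactness $U_\gamma^{\pm1}B-BU_\gamma^{\pm1}\in\cK(L^p(\mR_+))$ for $B\in\cA$ from \cite[Lemma~2.7]{KKL14a}, which is exactly what the paper does (and, for the coefficient part, the paper simply cites \cite[Corollary~6.4]{KKL11a} instead of re-deriving it from the Mellin calculus). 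With that branch chosen, your proof is correct and coincides with the paper's.
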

\begin{proof}
In view of \cite[Corollary~6.4]{KKL11a}, we have $aB-BaI\in\cK(L^p(\mR_+))$ for all $a\in SO(\mR_+)$
and all
$B\in\cA$. On the other hand, from \cite[Lemma~2.7]{KKL14a} it follows that
$U_\gamma^{\pm 1}B-BU_\gamma^{\pm 1}\in\cK(L^p(\mR_+))$ for all
$\gamma\in\{\alpha,\beta\}$ and $B\in\cA$. Hence, $AB-BA\in\cK(L^p(\mR_+))$
for each generator $A$ of $\mathcal{FO}_{\alpha,\beta}$ and
each $B\in\cA$. Thus, the same is true for all $A\in\mathcal{FO}_{\alpha,\beta}$
by a standard argument.
\end{proof}
\subsection{Ranges of Two Continuous Functions on \boldmath{$\mR$}}
Given $a\in\mC$ and $r>0$, let $\mD(a,r):=\{z\in\mC:|z-a|\le r\}$.
For $x\in\mR$, put
\begin{equation}\label{eq:p2}
p_2^+(x):=\frac{e^{2\pi x}}{e^{2\pi x}+1},
\quad
p_2^-(x):=\frac{1}{e^{2\pi x}+1}.
\end{equation}
\begin{lem}\label{le:range1}
Let $\psi,\zeta\in\mR$ and $v,w\in\mC$. If
\begin{equation}\label{eq:fx}
f(x):=(1-ve^{i\psi x})p_2^+(x)+(1-we^{i\zeta x})p_2^-(x),
\end{equation}
then $f(\mR)\subset\mD(1,r)$, where $r:=\max(|v|,|w|)$.
\end{lem}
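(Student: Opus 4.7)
The plan is to exploit a simple algebraic identity together with the triangle inequality. The key observation is that by the definitions in \eqref{eq:p2},
\[
p_2^+(x)+p_2^-(x)=\frac{e^{2\pi x}}{e^{2\pi x}+1}+\frac{1}{e^{2\pi x}+1}=1,
\quad x\in\mR,
\]
and both $p_2^+(x)$ and $p_2^-(x)$ are real and strictly positive for every $x\in\mR$.

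Using this, I would rewrite $f$ in \eqref{eq:fx} as
\[
f(x)=\bigl(p_2^+(x)+p_2^-(x)\bigr)-\bigl(ve^{i\psi x}p_2^+(x)+we^{i\zeta x}p_2^-(x)\bigr)
=1-\bigl(ve^{i\psi x}p_2^+(x)+we^{i\zeta x}p_2^-(x)\bigr).
\]
Then by the triangle inequality, using $|e^{i\psi x}|=|e^{i\zeta x}|=1$ and the positivity of $p_2^\pm(x)$,
\[
|f(x)-1|\le |v|\,p_2^+(x)+|w|\,p_2^-(x)\le r\bigl(p_2^+(x)+p_2^-(x)\bigr)=r,
\]
so $f(x)\in\mD(1,r)$ for every $x\in\mR$, which is precisely the claim.

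There is no real obstacle here: the statement is essentially a direct consequence of the partition-of-unity identity $p_2^++p_2^-=1$ on $\mR$ and of the positivity of $p_2^\pm$. This small lemma will presumably be applied later with $\psi,\zeta$ coming from the slowly oscillating data (through the shifts $\alpha,\beta$) and with $v,w$ being values of the coefficients $c,d$ on the fibers $\Delta$, so that the hypotheses $1\gg c$ and $1\gg d$ translate into $\max(|v|,|w|)<1$, ensuring $0\notin f(\mR)$.
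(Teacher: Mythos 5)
Your proof is correct and rests on exactly the same idea as the paper's: since $p_2^+(x)+p_2^-(x)=1$ with both terms positive, $f(x)$ is a convex combination of $1-ve^{i\psi x}$ and $1-we^{i\zeta x}$, which lie at distance $|v|$ and $|w|$ from $1$. The paper states this geometrically (the segment joining two points of the circles $|z-1|=|v|$ and $|z-1|=|w|$ stays in $\mD(1,r)$), while you phrase it via the triangle inequality; these are the same argument.
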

\begin{proof}
From \eqref{eq:p2} and \eqref{eq:fx} we see that for every $x\in\mR$
the point $f(x)$ lies on the line segment connecting the points
$1-ve^{i\psi x}$ and $1-we^{i\zeta x}$. In turn, these points lie on
the concentric circles
\begin{equation}\label{eq:range1-1}
\{z\in\mC:|z-1|=|v|\},
\quad
\{z\in\mC:|z-1|=|w|\},
\end{equation}
respectively. Thus, each line segment mentioned above is contained
in the disk $\mD(1,r)=\{z\in\mC:|z-1|\le\max(|v|,|w|)\}$.
\end{proof}
\begin{lem}\label{le:range2}
Let $\psi,\zeta\in\mR$ and $v,w\in\mC$ with $|v|<1$, $|w|<1$. If
\begin{equation}\label{eq:gx}
g(x):=(1-ve^{i\psi x})^{-1}p_2^+(x)+(1-we^{i\zeta x})^{-1}p_2^-(x),
\quad x\in\mR,
\end{equation}
then $g(\mR)\subset\mD((1-r^2)^{-1},(1-r^2)^{-1}r)$, where
$r=\max(|v|,|w|)<1$.
\end{lem}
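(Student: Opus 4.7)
The plan is to mimic the proof of Lemma~\ref{le:range1}. First I observe that for every $x\in\mR$ we have $p_2^+(x)+p_2^-(x)=1$ and $p_2^\pm(x)\in(0,1)$, so $g(x)$ is a convex combination of $(1-ve^{i\psi x})^{-1}$ and $(1-we^{i\zeta x})^{-1}$, and thus lies on the line segment joining them. Since $\mD((1-r^2)^{-1},(1-r^2)^{-1}r)$ is convex, it suffices to verify that for every $u\in\mC$ with $|u|\le r<1$ and every $\theta\in\mR$, the point $(1-ue^{i\theta})^{-1}$ belongs to this disk; applying this with $(u,\theta)=(v,\psi x)$ and $(u,\theta)=(w,\zeta x)$ will finish the proof.

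To prove the pointwise claim, I would note that as $\theta$ ranges over $\mR$ the number $1-ue^{i\theta}$ traces the circle $\{z\in\mC:|z-1|=\rho\}$, with $\rho:=|u|\le r<1$. Since $\rho<1$, the origin lies outside this circle, and under the Möbius inversion $z\mapsto 1/z$ it is mapped onto the circle with center $(1-\rho^2)^{-1}$ and radius $\rho(1-\rho^2)^{-1}$. Therefore the remaining task is the purely geometric inclusion of circles
\[
\bigl\{w\in\mC:|w-(1-\rho^2)^{-1}|=\rho(1-\rho^2)^{-1}\bigr\}\subset\mD\bigl((1-r^2)^{-1},r(1-r^2)^{-1}\bigr)
\]
for all $0\le\rho\le r<1$.

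By the triangle inequality, this inclusion follows from
\[
\left|\frac{1}{1-r^2}-\frac{1}{1-\rho^2}\right|+\frac{\rho}{1-\rho^2}\le\frac{r}{1-r^2}.
\]
Using $\rho\le r$, a short calculation rewrites the left-hand side as $(r^2+\rho)/[(1-r^2)(1+\rho)]$, so the inequality is equivalent to $r^2+\rho\le r(1+\rho)$, i.e.\ to $(r-\rho)(r-1)\le 0$, which holds because $\rho\le r<1$. This closes the argument.

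I do not anticipate any genuine obstacle: the whole statement reduces to the fact that the $1/z$-image of a small circle around $1$ is itself a small circle, together with a one-variable inequality. The step most susceptible to a slip is the algebraic verification of the circle inclusion, so I would write it out carefully. One could also bypass Möbius geometry and estimate $|(1-ue^{i\theta})^{-1}-(1-r^2)^{-1}|$ directly, but the Möbius viewpoint keeps the argument structurally parallel to Lemma~\ref{le:range1} and makes the role of the assumption $r<1$ geometrically transparent.
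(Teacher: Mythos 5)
Your proposal is correct and follows essentially the same route as the paper: you write $g(x)$ as a point on the segment joining $(1-ve^{i\psi x})^{-1}$ and $(1-we^{i\zeta x})^{-1}$, observe that inversion sends the circle $|z-1|=\rho$ to the circle with center $(1-\rho^2)^{-1}$ and radius $\rho(1-\rho^2)^{-1}$, and conclude by a disk inclusion. The only difference is that you verify explicitly, via the triangle inequality and the computation $(r^2+\rho)/[(1-r^2)(1+\rho)]\le r/(1-r^2)$, the nesting of these disks that the paper dismisses as obvious.
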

\begin{proof}
From \eqref{eq:p2} and \eqref{eq:gx} we see that for every $x\in\mR$
the point $g(x)$ lies on the line segment connecting the points
$(1-ve^{i\psi x})^{-1}$ and $(1-we^{i\zeta x})^{-1}$. In turn, these
points lie on the images of the circles given by \eqref{eq:range1-1}
under the inversion mapping $z\mapsto 1/z$. The image of the first
circle in \eqref{eq:range1-1} is the circle $\mT_v:=\{z\in\mC:|z-b|=\rho\}$
with center and radius given by
\begin{align*}
b &=[(1-|v|)^{-1}+(1+|v|)^{-1}]/2=(1-|v|^2)^{-1},\\
\rho &=[(1-|v|)^{-1}-(1+|v|)^{-1}]/2=(1-|v|^2)^{-1}|v|.
\end{align*}
Analogously, the image of the second circle in \eqref{eq:range1-1} is the
circle
\[
\mT_w:=
\left\{z\in\mC:\big|z-(1-|w|^2)^{-1}\big|=(1-|w|^2)^{-1}|w|\right\}.
\]
Let $\mD_v$ and $\mD_w$ be the closed disks whose boundaries are $\mT_v$ and
$\mT_w$, respectively. Obviously, one of these disks is contained in another
one, namely, $\mD_v\subset\mD_w$ if $|v|\le|w|$ and $\mD_w\subset\mD_v$
otherwise. Then each point $g(x)$, lying on the segment connecting the points
$(1-ve^{i\psi x})^{-1}\in\mT_v$ and $(1-we^{i\zeta x})^{-1}\in\mT_w$, belongs
to the biggest disk in $\{\mD_v,\mD_w\}$, that is, to the disk with center
$(1-r^2)^{-1}$ and radius $(1-r^2)^{-1}r$, where $r=\max(|v|,|w|)<1$.
\end{proof}
From Lemmas~\ref{le:range1} and~\ref{le:range2} it follows that the ranges
$f(\mR)$ and $g(\mR)$ do not contain the origin if $|v|<1$ and $|w|<1$.
\section{Weighted Singular Integral Operators Are Similar to Mellin Convolution Operators}
\label{sec:Mellin-convolutions}
\subsection{Mellin Convolution Operators}
Let $\cF:L^2(\mR)\to L^2(\mR)$ denote the Fourier transform,
\[
(\cF f)(x):=\int\limits_\mR f(y)e^{-ixy}dy,\quad x\in\mR,
\]
and let $\cF^{-1}:L^2(\mR)\to L^2(\mR)$ be the inverse of $\cF$. A function
$a\in L^\infty(\mR)$ is called a Fourier multiplier on $L^p(\mR)$
if the mapping
$f\mapsto \cF^{-1}a\cF f$ maps $L^2(\mR)\cap L^p(\mR)$ onto itself and extends
to a bounded operator on $L^p(\mR)$. The latter operator is then denoted by
$W^0(a)$. We let $\cM_p(\mR)$ stand for the set of all Fourier multipliers on
$L^p(\mR)$. One can show that $\cM_p(\mR)$ is a Banach algebra under the norm
\[
\|a\|_{\cM_p(\mR)}:=\|W^0(a)\|_{\cB(L^p(\mR))}.
\]

Let $d\mu(t)=dt/t$ be the (normalized) invariant measure on $\mR_+$.
Consider the Fourier transform on $L^2(\mathbb{R}_+,d\mu)$, which is
usually referred to as the Mellin transform and is defined by
\[
\cM:L^2(\mR_+,d\mu)\to L^2(\mR),
\quad
(\cM f)(x):=\int\limits_{\mR_+} f(t) t^{-ix}\,\frac{dt}{t}.
\]
It is an invertible operator, with inverse given by
\[
{\cM^{-1}}:L^2(\mR)\to L^2(\mR_{+},d\mu),
\quad
({\cM^{-1}}g)(t)= \frac{1}{2\pi}\int\limits_{\mR}
g(x)t^{ix}\,dx.
\]
Let $E$ be the isometric isomorphism
\begin{equation}\label{eq:def-E}
E:L^p(\mR_+,d\mu)\to L^p(\mR),
\quad
(Ef)(x):=f(e^x),\quad x\in\mR.
\end{equation}
Then the map $A\mapsto E^{-1}AE$ transforms the Fourier convolution
operator $W^0(a)=\cF^{-1}a\cF$ to the Mellin convolution operator
\[
\operatorname{Co}(a):=\cM^{-1}a\cM
\]
with the same symbol $a$. Hence the class of Fourier multipliers on
$L^p(\mR)$ coincides with the class of Mellin multipliers on $L^p(\mR_+,d\mu)$.
\subsection{Algebra \boldmath{$\cA$} of Singular Integral Operators}
\label{subsec:algebra-A}
Consider the isometric isomorphism
\begin{equation}\label{eq:def-Phi}
\Phi:L^p(\mR_+)\to L^p(\mR_+,d\mu),
\quad
(\Phi f)(t):=t^{1/p}f(t),\quad t\in\mR_+,
\end{equation}
The following statement is well known (see, e.g., \cite{D87},
\cite[Section~2.1.2]{HRS94}, and \cite[Sections~4.2.2--4.2.3]{RSS11}).
\begin{lem}\label{le:alg-A}
For every $y\in(1,\infty)$, the functions $s_y$ and $r_y$  given by
\[
s_y(x):=\coth[\pi(x+i/y)],
\quad
r_y(x):=1/\sinh[\pi(x+i/y)],
\quad x\in\mR,
\]
belong to $\cM_p(\mR)$, the operators $S_y$ and $R_y$ belong to the algebra $\cA$, and
\[
S_y=\Phi^{-1}\Co(s_y)\Phi,
\quad
R_y=\Phi^{-1}\Co(r_y)\Phi.
\]
\end{lem}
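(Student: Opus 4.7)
The plan is to verify the displayed identities by direct computation, derive the explicit symbols via classical Mellin integrals, and then invoke boundedness together with a functional-calculus argument for the algebra-membership claim. First, I would compute the conjugated operator $\Phi S_y\Phi^{-1}$ on a test function $g$. Since $(\Phi^{-1}g)(\tau)=\tau^{-1/p}g(\tau)$ and $(\Phi h)(t)=t^{1/p}h(t)$, the factors $t^{1/p}$ and $\tau^{-1/p}$ combine with the weight $(t/\tau)^{1/y-1/p}$ in the kernel of $S_y$ to produce $(t/\tau)^{1/y}$. Performing the substitution $v=t/\tau$ then rewrites
\[
(\Phi S_y\Phi^{-1}g)(t)=\frac{1}{\pi i}\int_0^\infty\frac{v^{1/y-1}}{1-v}\,g(t/v)\,dv
\]
(as a principal value), which is exactly a Mellin convolution of $g$ with the kernel $v^{1/y-1}/(\pi i(1-v))$. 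The parallel manipulation for $R_y$ (with $\tau+t$ in place of $\tau-t$) produces a Mellin convolution with kernel $v^{1/y-1}/(\pi i(1+v))$.

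Second, I would identify the symbols by computing $\cM$ of these kernels. Using the classical principal-value integrals
\[
\int_0^\infty\frac{v^{a-1}}{1-v}\,dv=\pi\cot(\pi a),\qquad \int_0^\infty\frac{v^{a-1}}{1+v}\,dv=\frac{\pi}{\sin(\pi a)}\qquad(0<\operatorname{Re}a<1),
\]
with $a=1/y-ix$, and the identity $\cot(iw)=-i\coth(w)$ to translate trigonometric into hyperbolic form, one obtains precisely $s_y(x)=\coth[\pi(x+i/y)]$ and $r_y(x)=1/\sinh[\pi(x+i/y)]$. The boundedness of $S_y$ and $R_y$ on $L^p(\mR_+)$, together with the fact that $\Phi$ and $E$ are isometric isomorphisms, then forces $s_y,r_y\in\cM_p(\mR)$ via the identities $S_y=\Phi^{-1}\Co(s_y)\Phi$ and $R_y=\Phi^{-1}\Co(r_y)\Phi$ just derived.

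For the algebra-membership assertion, note that the choice $y=p$ reduces the weight $(t/\tau)^{1/y-1/p}$ to $1$, so $S_p=S$, and its symbol is $s_p(x)=\coth[\pi(x+i/p)]$. Under the similarity $A\mapsto E\Phi A\Phi^{-1}E^{-1}$, the algebra $\cA$ is carried onto the closed subalgebra of Fourier convolution operators on $L^p(\mR)$ generated by $I$ and $W^0(s_p)$, which coincides with $W^0$ of the closed subalgebra of $\cM_p(\mR)$ generated by $1$ and $s_p$. The functional-calculus arguments worked out in \cite{D87}, \cite[Section~2.1.2]{HRS94} and \cite[Sections~4.2.2--4.2.3]{RSS11} then show that $s_y$ and $r_y$ lie in this subalgebra for every $y\in(1,\infty)$. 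I expect this last step to be the main obstacle: the Mellin-transform identifications are routine, but producing $s_y$ and $r_y$ as norm limits of polynomials in $s_p$ inside $\cM_p(\mR)$ requires the nontrivial approximation results present in those references.
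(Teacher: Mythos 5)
Your proposal is correct and is essentially the argument the paper relies on: the paper itself offers no proof of this lemma, stating it as well known and citing \cite{D87}, \cite[Section~2.1.2]{HRS94}, \cite[Sections~4.2.2--4.2.3]{RSS11}, and your computation reconstructs exactly what those sources do. The conjugation by $\Phi$ does turn the weight $(t/\tau)^{1/y-1/p}$ into $(t/\tau)^{1/y}$, the substitution $v=t/\tau$ exhibits $\Phi S_y\Phi^{-1}$ and $\Phi R_y\Phi^{-1}$ as Mellin convolutions, and the principal-value integrals $\int_0^\infty v^{a-1}(1-v)^{-1}dv=\pi\cot(\pi a)$ and $\int_0^\infty v^{a-1}(1+v)^{-1}dv=\pi/\sin(\pi a)$ with $a=1/y-ix$ give precisely $\coth[\pi(x+i/y)]$ and $1/\sinh[\pi(x+i/y)]$, as you claim. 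Two small remarks: the multiplier assertion $s_y,r_y\in\cM_p(\mR)$ need not be routed through prior knowledge of the boundedness of $S_y,R_y$ (which, as you set it up, is borrowed from weighted norm inequalities for the Cauchy kernel); it follows directly from Stechkin's theorem, since $s_y$ and $r_y$ are bounded with finite total variation on $\mR$, and then the identities $S_y=\Phi^{-1}\Co(s_y)\Phi$, $R_y=\Phi^{-1}\Co(r_y)\Phi$ yield boundedness as a consequence. And, as you correctly anticipate, the only genuinely nontrivial point is $S_y,R_y\in\cA$, i.e.\ that $s_y,r_y$ lie in the closed subalgebra of $\cM_p(\mR)$ generated by $1$ and $s_p$; deferring that approximation argument to the cited references is exactly what the paper does, so no gap results.
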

For $y\in(1,\infty)$ and $x\in\mR$, put
\[
p_y^\pm(x):=(1\pm s_y(x))/2.
\]
This definition is consistent with \eqref{eq:p2} because $s_2(x)=\tanh(\pi x)$ for
$x\in\mR$. In view of Lemma~\ref{le:alg-A} we have
\[
P_y^\pm=(I\pm S_y)/2=\Phi^{-1}\Co(p_y^\pm)\Phi.
\]
\begin{lem}\label{le:PR-relations}
\begin{enumerate}
\item[{\rm(a)}] For $y\in(1,\infty)$ and $x\in\mR$, we have
\[
p_y^+(x)p_y^-(x)=-\frac{(r_y(x))^2}{4},
\quad
(p_y^\pm(x))^2=p_y^\pm(x)+\frac{(r_y(x))^2}{4}.
\]

\item[{\rm(b)}]
For every $y\in\mR_+$, we have
\[
P_y^+P_y^-=P_y^-P_y^+=-\frac{R_y^2}{4},
\quad
(P_y^\pm)^2=P_y^\pm+\frac{R_y^2}{4}.
\]
\end{enumerate}
\end{lem}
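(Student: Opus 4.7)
For part (a), the plan is to reduce both identities to the single hyperbolic identity $\coth^2 z - \operatorname{csch}^2 z = 1$. Applied at $z = \pi(x+i/y)$ this gives
\[
s_y(x)^2 - r_y(x)^2 = 1, \qquad x\in\mR.
\]
Using $p_y^\pm = (1\pm s_y)/2$, a direct expansion yields
\[
p_y^+(x)\,p_y^-(x) = \frac{1 - s_y(x)^2}{4} = -\frac{r_y(x)^2}{4},
\]
while
\[
\bigl(p_y^\pm(x)\bigr)^2 = \frac{1 \pm 2s_y(x) + s_y(x)^2}{4} = \frac{1\pm s_y(x)}{2} + \frac{s_y(x)^2 - 1}{4} = p_y^\pm(x) + \frac{r_y(x)^2}{4}.
\]
This is purely algebraic, so there is no real obstacle here beyond the hyperbolic identity.

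For part (b), the plan is to transport the pointwise identities of (a) to the operator level via Lemma~\ref{le:alg-A}. That lemma gives $P_y^\pm = \Phi^{-1}\Co(p_y^\pm)\Phi$ and $R_y = \Phi^{-1}\Co(r_y)\Phi$. Since $\cM_p(\mR)$ is a Banach algebra and $\Co(a)\Co(b) = \Co(ab)$ for $a,b\in\cM_p(\mR)$, the symbols $p_y^\pm$, $r_y$ (and hence their products $p_y^+ p_y^-$, $(p_y^\pm)^2$, $r_y^2$) all lie in $\cM_p(\mR)$, and multiplication of symbols corresponds to composition of the associated Mellin convolution operators. Applying $\Phi^{-1}(\cdot)\Phi$ to each of the identities from (a) then yields
\[
P_y^+ P_y^- = \Phi^{-1}\Co(p_y^+ p_y^-)\Phi = -\Phi^{-1}\Co(r_y^2)\Phi/4 = -R_y^2/4,
\]
and the same expression follows for $P_y^- P_y^+$ since scalar-valued symbols commute pointwise. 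The identity for $(P_y^\pm)^2$ is obtained analogously from the second identity in (a).

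The only point requiring a moment of care is the multiplicativity $\Co(ab)=\Co(a)\Co(b)$ on $\cM_p(\mR)$, but this is a standard property of the Mellin/Fourier convolution calculus inherited from the fact that $\cM_p(\mR)$ is closed under pointwise multiplication. No deeper obstacle arises: part (a) is a one-line hyperbolic computation and part (b) is its functorial image under the symbol-to-operator map of Lemma~\ref{le:alg-A}.
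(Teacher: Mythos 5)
Your proof is correct and follows the same route as the paper: part (a) is exactly the identity $s_y^2-r_y^2=1$ plus the algebra of $p_y^\pm=(1\pm s_y)/2$, and part (b) is obtained, as in the paper, by transporting these pointwise identities through Lemma~\ref{le:alg-A} using the multiplicativity of the Mellin convolution calculus. Your write-up merely makes explicit the details the paper leaves to the reader.
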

\begin{proof}
Part (a) follows straightforwardly from the identity $s_y^2(x)-r_y^2(x)=1$.
Part (b) follows from part (a) and Lemma~\ref{le:alg-A}.
\end{proof}
\section{Mellin Pseudodifferential Operators and Their Symbols}
\label{sec:Mellin-PDO}
\subsection{Boundedness of Mellin Pseudodifferential Operators}
In 1991 Rabinovich \cite{R92} proposed to use Mellin pseudodifferential operators
with $C^\infty$ slowly oscillating symbols to study singular integral operators
with slowly oscillating coefficients on $L^p$ spaces. This idea was exploited
in a series of papers by Rabinovich and coauthors. A detailed history and a complete
bibliography up to 2004 can be found in \cite[Sections~4.6--4.7]{RRS04}. Further,
the second author developed in \cite{K06} a handy for our purposes theory of
Fourier pseudodifferential operators with slowly oscillating symbols of limited smoothness
(much less restrictive than in the works mentioned in \cite{RRS04}).
In this section we translate necessary results from \cite{K06} to the Mellin
setting with the aid of the transformation
\[
A\mapsto E^{-1}AE,
\]
where $A\in\cB(L^p(\mR))$ and the isometric isomorphism $E:L^p(\mR_+,d\mu)\to L^p(\mR)$
is defined by \eqref{eq:def-E}.

Let $a$ be an absolutely continuous function of finite total variation
\[
V(a):=\int\limits_\mR|a'(x)|dx
\]
on $\mR$. The set $V(\mR)$ of all absolutely continuous functions of finite
total variation on $\mR$ becomes a Banach algebra equipped with the norm
\begin{equation}\label{eq:norm-V}
\|a\|_V:=\|a\|_{L^\infty(\mR)}+V(a).
\end{equation}
Following \cite{K06,K06-IWOTA}, let $C_b(\mR_+,V(\mR))$ denote the Banach
algebra of all bounded continuous $V(\mR)$-valued functions on $\mR_+$ with
the norm
\[
\|\fa(\cdot,\cdot)\|_{C_b(\mR_+,V(\mR))}
=
\sup_{t\in\mR_+}\|\fa(t,\cdot)\|_V.
\]
As usual, let $C_0^\infty(\mR_+)$ be the set of all infinitely differentiable
functions of compact support on $\mR_+$.

The following boundedness result for Mellin pseudodifferential operators
follows from \cite[Theorem~6.1]{K06-IWOTA} (see also \cite[Theorem~3.1]{K06}).
\begin{thm}\label{th:boundedness-PDO}
If $\fa\in C_b(\mR_+,V(\mR))$, then the Mellin pseudodifferential operator
$\operatorname{Op}(\fa)$, defined for functions $f\in C_0^\infty(\mR_+)$ by
the iterated integral
\[
[\operatorname{Op}(\fa)f](t)
=
\frac{1}{2\pi}\int\limits_\mR dx \int\limits_{\mR_+}
\fa(t,x)\left(\frac{t}{\tau}\right)^{ix}f(\tau) \frac{d\tau}{\tau}
\quad\mbox{for}\quad t\in\mR_+,
\]
extends to a bounded linear operator on the space $L^p(\mR_+,d\mu)$
and there is a number $C_p\in(0,\infty)$ depending only on $p$ such
that
\[
\|\operatorname{Op}(\fa)\|_{\cB(L^p(\mR_+,d\mu))}
\le C_p\|\fa\|_{C_b(\mR_+,V(\mR))}.
\]
\end{thm}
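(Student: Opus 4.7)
The plan is to deduce the statement from its Fourier analogue by transferring the result through the isometric isomorphism $E\colon L^p(\mR_+,d\mu)\to L^p(\mR)$ of \eqref{eq:def-E}. Recall that, by \cite[Theorem~6.1]{K06-IWOTA}, for any $\mathfrak{b}\in C_b(\mR,V(\mR))$ the Fourier pseudodifferential operator
\[
[\Op_F(\mathfrak{b})g](x):=\frac{1}{2\pi}\int_\mR dy\int_\mR \mathfrak{b}(x,y)e^{i(x-z)y}g(z)\,dz,
\qquad g\in C_0^\infty(\mR),
\]
extends to a bounded operator on $L^p(\mR)$ with norm at most $C_p\|\mathfrak{b}\|_{C_b(\mR,V(\mR))}$.

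Given $\fa\in C_b(\mR_+,V(\mR))$, I would set $\mathfrak{b}(\xi,x):=\fa(e^\xi,x)$ for $\xi,x\in\mR$. Since $\xi\mapsto e^\xi$ is a homeomorphism of $\mR$ onto $\mR_+$, the function $\mathfrak{b}$ lies in $C_b(\mR,V(\mR))$ with $\|\mathfrak{b}\|_{C_b(\mR,V(\mR))}=\|\fa\|_{C_b(\mR_+,V(\mR))}$. The central calculation, to be carried out on $f\in C_0^\infty(\mR_+)$, is the intertwining identity
\[
\Op(\fa)=E^{-1}\Op_F(\mathfrak{b})E.
\]
Indeed, substituting $\tau=e^z$ (so that $dz=d\tau/\tau$ and $e^{i(x-z)y}=(e^x/\tau)^{iy}$) in the inner integral of $[\Op_F(\mathfrak{b})(Ef)](x)$ yields
\[
\frac{1}{2\pi}\int_\mR dy\int_{\mR_+}\fa(e^x,y)\left(\frac{e^x}{\tau}\right)^{iy}f(\tau)\,\frac{d\tau}{\tau},
\]
and evaluating at $x=\log t$ (i.e.\ applying $E^{-1}$) recovers precisely $[\Op(\fa)f](t)$. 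Since $E$ is an isometric isomorphism, the bound $\|\Op(\fa)\|_{\cB(L^p(\mR_+,d\mu))}\le C_p\|\fa\|_{C_b(\mR_+,V(\mR))}$ follows from the Fourier bound with the same constant $C_p$; extension from $C_0^\infty(\mR_+)$ to $L^p(\mR_+,d\mu)$ is then by density.

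There is no substantial obstacle here: all of the genuine analytic content (absolute continuity in the frequency variable, the $V(\mR)$ multiplier estimate, and boundedness on $L^p$) is packaged inside \cite[Theorem~6.1]{K06-IWOTA}. What remains is purely bookkeeping, namely justifying the Fubini-type interchange used above on compactly supported $f$ (immediate from $f\in C_0^\infty(\mR_+)$ together with $\mathfrak{b}(x,\cdot)\in L^\infty(\mR)$) and verifying that $\xi\mapsto\fa(e^\xi,\cdot)$ inherits continuity as a $V(\mR)$-valued function on $\mR$ from the continuity of $\fa$ on $\mR_+$.
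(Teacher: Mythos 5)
Your argument is exactly the paper's: the authors obtain this theorem by citing the Fourier result \cite[Theorem~6.1]{K06-IWOTA} (see also \cite[Theorem~3.1]{K06}) and transferring it to the Mellin setting via the isometry $E$ of \eqref{eq:def-E} and the symbol correspondence \eqref{eq:translation-PDO}--\eqref{eq:translation-symbols}, which is precisely your substitution $\mathfrak{b}(\xi,x)=\fa(e^\xi,x)$, $\tau=e^z$. The proposal is correct and follows essentially the same route, merely writing out the change-of-variables computation the paper leaves implicit.
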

Obviously, if $\fa(t,x)=a(x)$ for all $(t,x)\in\mR_+\times\mR$, then the Mellin
pseudodifferential operator $\Op(\fa)$ becomes the Mellin convolution operator
\[
\Op(\fa)=\Co(a).
\]
\subsection{Algebra \boldmath{$\cE(\mR_+,V(\mR))$}}
Let $SO(\mR_+,V(\mR))$ denote the Banach subalgebra of $C_b(\mR_+,V(\mR))$
consisting of all $V(\mR)$-valued functions $\fa$ on $\mR_+$ that slowly
oscillate at $0$ and $\infty$, that is,
\[
\lim_{r\to 0} \operatorname{cm}_r^C(\fa)
=
\lim_{r\to \infty} \operatorname{cm}_r^C(\fa)=0,
\]
where
\[
\operatorname{cm}_r^C(\fa)
:=
\max\big\{
\big\|\fa(t,\cdot)-\fa(\tau,\cdot)\big\|_{L^\infty(\mR)}:t,\tau\in[r,2r]
\big\}.
\]

Let $\cE(\mR_+,V(\mR))$ be the Banach algebra of all $V(\mR)$-valued functions
$\fa\in SO(\mR_+,V(\mR))$ such that
\[
\lim_{|h|\to 0}\sup_{t\in\mR_+}\big\|\fa(t,\cdot)-\fa^h(t,\cdot)\big\|_V=0
\]
where $\fa^h(t,x):=\fa(t,x+h)$ for all $(t,x)\in\mR_+\times \mR$.

Let $\fa\in\cE(\mR_+,V(\mR))$. For every $t\in\mR_+$, the function $\fa(t,\cdot)$
belongs to $V(\mR)$ and, therefore, has finite limits at $\pm\infty$, which will
be denoted by $\fa(t,\pm\infty)$. Now we explain how to extend the function
$\fa$ to $\Delta\times\overline{\mR}$. By analogy with \cite[Lemma~2.7]{K06}
with the aid of Lemma~\ref{le:SO-fundamental-property}
one can prove the following.
\begin{lem}\label{le:values}
Let $s\in\{0,\infty\}$ and $\{\fa_k\}_{k=1}^\infty$ be a countable subset of the
algebra $\cE(\mR_+,V(\mR))$. For each $\xi\in M_s(SO(\mR_+))$ there is a sequence
$\{t_j\}_{j\in\mN}\subset\mR_+$ and functions $\fa_k(\xi,\cdot)\in V(\mR)$ such
that $t_j\to s$ as $j\to\infty$ and
\[
\fa_k(\xi,x)=\lim_{j\to\infty}\fa_k(t_j,x)
\]
for every $x\in\overline{\mR}$ and every $k\in\mN$.
\end{lem}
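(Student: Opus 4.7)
The plan is to reduce the $V(\mR)$-valued statement to countably many scalar slowly oscillating problems, apply Lemma~\ref{le:SO-fundamental-property}, and then promote the resulting pointwise convergence on a dense set to all of $\overline{\mR}$ with the help of the smoothness condition defining $\cE(\mR_+,V(\mR))$.

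First I would observe that $\cE(\mR_+,V(\mR))\subset SO(\mR_+,V(\mR))$, and that for every $x\in\mR$ the estimate
\[
|\fa_k(t,x)-\fa_k(\tau,x)|\le\|\fa_k(t,\cdot)-\fa_k(\tau,\cdot)\|_{L^\infty(\mR)}
\]
shows that the scalar function $t\mapsto\fa_k(t,x)$ lies in $SO(\mR_+)$; the same inequality extends to $x=\pm\infty$ after passing to the limits $\fa_k(t,\pm\infty)$ which exist because $\fa_k(t,\cdot)\in V(\mR)$. Pick a countable dense set $D_0\subset\mR$ and put $D:=D_0\cup\{-\infty,+\infty\}$. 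Applying Lemma~\ref{le:SO-fundamental-property} to the countable family $\{\fa_k(\cdot,x):k\in\mN,\ x\in D\}\subset SO(\mR_+)$, I obtain a single sequence $\{t_j\}_{j\in\mN}\subset\mR_+$ with $t_j\to s$ such that
\[
\fa_k(\xi,x):=\lim_{j\to\infty}\fa_k(t_j,x)=\xi\big(\fa_k(\cdot,x)\big)
\]
exists for all $k\in\mN$ and $x\in D$.

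Next I would upgrade the convergence from $D$ to all of $\overline{\mR}$. The hypothesis
\[
\lim_{|h|\to 0}\sup_{t\in\mR_+}\|\fa_k(t,\cdot)-\fa_k^h(t,\cdot)\|_V=0,
\]
together with $\|\cdot\|_{L^\infty(\mR)}\le\|\cdot\|_V$, gives uniform equicontinuity of the family $\{\fa_k(t,\cdot):t\in\mR_+\}$ on $\mR$; a standard three-term argument then propagates the pointwise convergence from $D_0$ to every $x\in\mR$, and convergence at $\pm\infty$ was already arranged in the previous step. Finite total variation of $\fa_k(\xi,\cdot)$ follows from lower semicontinuity of $V(\cdot)$ under pointwise convergence together with the uniform bound $V(\fa_k(t_j,\cdot))\le\|\fa_k\|_{C_b(\mR_+,V(\mR))}$. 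For absolute continuity I would use that $\|f\|_V\ge\|f'\|_{L^1(\mR)}$, so the shift-continuity hypothesis transfers to uniform $L^1$ shift-continuity of the derivatives $\fa_k'(t,\cdot)$; the Riesz--Fr\'echet--Kolmogorov criterion, applied on compacta, yields (after a diagonal extraction over $k$) $L^1_{\mathrm{loc}}$-convergence of $\fa_k'(t_j,\cdot)$ whose limit is the distributional derivative of $\fa_k(\xi,\cdot)$.

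The main obstacle is this absolute continuity, since neither the variation bound nor pointwise convergence alone forces the limit to remain in $V(\mR)$; the uniform shift-continuity of the derivatives in $L^1$ is precisely the input that resolves this. The rest is a routine combination of Lemma~\ref{le:SO-fundamental-property} with an Arzel\`a--Ascoli-type argument, exactly parallel to \cite[Lemma~2.7]{K06}.
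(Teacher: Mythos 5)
Your proposal is correct and follows essentially the route the paper intends, since the paper proves this lemma only ``by analogy with \cite[Lemma~2.7]{K06} with the aid of Lemma~\ref{le:SO-fundamental-property}'': you reduce to the scalar slowly oscillating functions $t\mapsto\fa_k(t,x)$ for $x$ in a countable dense subset of $\overline{\mR}$ (including $\pm\infty$), apply Lemma~\ref{le:SO-fundamental-property} to get one sequence $t_j\to s$, and then use the uniform shift-continuity built into $\cE(\mR_+,V(\mR))$ both to extend the convergence to all of $\overline{\mR}$ and to keep the limit in $V(\mR)$. The extra care you take with absolute continuity of the limit (uniform $L^1$ translation continuity of the derivatives plus a Kolmogorov--Riesz/diagonal argument) is sound and fills in exactly the detail the paper delegates to \cite{K06}.
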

A straightforward application of Lemma~\ref{le:values} leads to the following.
\begin{lem}\label{le:values-sum-product}
Let $\fb\in\cE(\mR_+,V(\mR))$, $m,n\in\mN$, and $\fa_{ij}\in\cE(\mR_+,V(\mR))$ for
$i\in\{1,\dots,m\}$ and $j\in\{1,\dots,n\}$. If
\[
\fb(t,x)=\sum_{i=1}^m\prod_{j=1}^n\fa_{ij}(t,x),\quad (t,x)\in\mR_+\times\mR,
\]
then
\[
\fb(\xi,x)=\sum_{i=1}^m\prod_{j=1}^n\fa_{ij}(\xi,x),\quad (\xi,x)\in\Delta\times\overline{\mR}.
\]
\end{lem}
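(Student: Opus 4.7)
The plan is to apply Lemma~\ref{le:values} directly to the finite (hence countable) family consisting of $\fb$ together with all the functions $\fa_{ij}$, and then transfer the algebraic identity pointwise via sequences.

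First I would fix an arbitrary $\xi \in \Delta$. By the description of $\Delta$ as $M_0(SO(\mR_+)) \cup M_\infty(SO(\mR_+))$, there is an $s \in \{0,\infty\}$ with $\xi \in M_s(SO(\mR_+))$. Lemma~\ref{le:values} then supplies a single sequence $\{t_k\} \subset \mR_+$ with $t_k \to s$ that simultaneously represents $\xi$ on all $mn+1$ functions, in the sense that for every $x \in \overline{\mR}$,
\[
\fb(\xi,x) = \lim_{k\to\infty} \fb(t_k,x), \qquad \fa_{ij}(\xi,x) = \lim_{k\to\infty} \fa_{ij}(t_k,x).
\]

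Next I would extend the hypothesis from $\mR_+ \times \mR$ to $\mR_+ \times \overline{\mR}$. For each fixed $t \in \mR_+$, the functions $\fb(t,\cdot)$ and $\fa_{ij}(t,\cdot)$ lie in $V(\mR)$ and therefore admit finite limits at $\pm\infty$, which by definition are the values $\fb(t,\pm\infty)$ and $\fa_{ij}(t,\pm\infty)$. Letting $x \to \pm\infty$ in the identity $\fb(t,x) = \sum_i \prod_j \fa_{ij}(t,x)$ on $\mR$ and using continuity of finite sums and products thus yields
\[
\fb(t,x) = \sum_{i=1}^m \prod_{j=1}^n \fa_{ij}(t,x), \qquad (t,x) \in \mR_+ \times \overline{\mR}.
\]
Specializing this to $t = t_k$ and passing to the limit $k \to \infty$ at any chosen $x \in \overline{\mR}$, again using continuity of the polynomial operation in finitely many complex variables, gives the desired equality $\fb(\xi,x) = \sum_i \prod_j \fa_{ij}(\xi,x)$.

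There is no real obstacle here: the statement is a structural consequence of Lemma~\ref{le:values}, and the only mildly delicate point is the extension of the identity to $x = \pm\infty$, which is handled by the $V(\mR)$-regularity of every factor. The uniformity of the sequence $\{t_k\}$ over all $mn+1$ functions is exactly what Lemma~\ref{le:values} provides, so no diagonal argument is needed.
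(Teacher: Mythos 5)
Your proof is correct and follows exactly the route the paper intends: the paper gives no detailed argument, stating only that the lemma is "a straightforward application of Lemma~\ref{le:values}," which is precisely your strategy of applying that lemma to the finite family $\{\fb\}\cup\{\fa_{ij}\}$ to get one common sequence and then passing the algebraic identity to the limit. Your extra care in first extending the identity to $x=\pm\infty$ via the $V(\mR)$-limits is a sound way to cover the full range $\overline{\mR}$ claimed in the statement.
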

\begin{lem}[{\cite[Lemma 3.2]{KKL14b}}]
\label{le:values-series}
Let $\{\fa_n\}_{n\in\mathbb{N}}$ be a sequence of functions in $\cE(\mR_+,V(\mR))$
such that the series $\sum_{n=1}^\infty\fa_n$ converges in the norm of the algebra
$C_b(\mR_+,V(\mR))$ to a function $\fa\in\cE(\mR_+,V(\mR))$. Then
\begin{align}
&
\fa(t,\pm\infty) =\sum_{n=1}^\infty \fa_n(t,\pm\infty)
&& \mbox{for all}\quad
t\in\mR_+,
\label{eq:values-series-1}
\\
&
\fa(\xi,x) =\sum_{n=1}^\infty \fa_n(\xi,x)
&& \mbox{for all}\quad
(\xi,x)\in\Delta\times\mR.
\label{eq:values-series-2}
\end{align}
\end{lem}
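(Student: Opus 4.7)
The plan is to treat the two displayed equations separately. Equation \eqref{eq:values-series-1} is a one-line consequence of the continuity of the evaluation-at-$\pm\infty$ functionals on the algebra $V(\mR)$, while \eqref{eq:values-series-2} is handled by combining Lemma~\ref{le:values} (to secure a common realizing sequence $\{t_j\}$ for $\fa$ and all $\fa_n$ simultaneously) with the uniformity of the convergence in the $C_b(\mR_+,V(\mR))$-norm.

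For \eqref{eq:values-series-1}, I would first observe that every $b\in V(\mR)$ is absolutely continuous with finite total variation, so the limits $b(\pm\infty)$ exist and satisfy $|b(\pm\infty)|\le\|b\|_{L^\infty(\mR)}\le\|b\|_V$ by \eqref{eq:norm-V}. Hence evaluation at $\pm\infty$ is a bounded linear functional on $V(\mR)$. Fixing $t\in\mR_+$, the hypothesis that $\sum_n\fa_n$ converges to $\fa$ in $C_b(\mR_+,V(\mR))$ implies, at the single slice $t$, that $\sum_{n=1}^N\fa_n(t,\cdot)\to\fa(t,\cdot)$ in the $V(\mR)$-norm. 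Applying the bounded functional $b\mapsto b(\pm\infty)$ termwise and passing to the limit then yields \eqref{eq:values-series-1}.

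For \eqref{eq:values-series-2}, fix $s\in\{0,\infty\}$ and $\xi\in M_s(SO(\mR_+))$. Apply Lemma~\ref{le:values} to the countable subset $\{\fa\}\cup\{\fa_n\}_{n\in\mN}$ of $\cE(\mR_+,V(\mR))$ to extract a single sequence $\{t_j\}\subset\mR_+$ with $t_j\to s$ such that
$$\fa(\xi,x)=\lim_{j\to\infty}\fa(t_j,x),\qquad \fa_n(\xi,x)=\lim_{j\to\infty}\fa_n(t_j,x)$$
simultaneously for every $x\in\mR$ and every $n\in\mN$. Given $\eps>0$, choose $N$ so that $\|\fa-\sum_{n=1}^N\fa_n\|_{C_b(\mR_+,V(\mR))}<\eps$; the bound $\|\cdot\|_{L^\infty(\mR)}\le\|\cdot\|_V$ then produces the uniform-in-$j$ pointwise estimate $\bigl|\fa(t_j,x)-\sum_{n=1}^N\fa_n(t_j,x)\bigr|<\eps$. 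Passing $j\to\infty$ in this \emph{finite} sum (term by term) gives $\bigl|\fa(\xi,x)-\sum_{n=1}^N\fa_n(\xi,x)\bigr|\le\eps$, and since $\eps$ is arbitrary, \eqref{eq:values-series-2} follows.

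The main — and essentially only — obstacle is securing a \emph{single} sequence $\{t_j\}$ that realizes both $\fa(\xi,x)$ and every $\fa_n(\xi,x)$; a naive application of Lemma~\ref{le:values} to each $\fa_n$ in isolation would yield a different sequence for each $n$ and thereby obstruct the interchange of the limit in $j$ with the infinite sum in $n$. Lemma~\ref{le:values} is designed precisely to provide such a common sequence for any countable family, after which the remainder of the argument reduces to the routine $\eps$-$N$ estimate sketched above.
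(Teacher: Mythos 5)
The paper does not prove this lemma itself; it is imported verbatim from \cite[Lemma~3.2]{KKL14b}, so there is no internal proof to compare against. Your argument is correct and is the natural one: \eqref{eq:values-series-1} follows because evaluation at $\pm\infty$ is a bounded linear functional on $V(\mR)$ (via $|b(\pm\infty)|\le\|b\|_{L^\infty(\mR)}\le\|b\|_V$), and \eqref{eq:values-series-2} follows from applying Lemma~\ref{le:values} to the countable family $\{\fa\}\cup\{\fa_n\}_{n\in\mN}$ to get one common realizing sequence, then using the uniform bound $\|\cdot\|_{L^\infty(\mR)}\le\|\cdot\|_V$ and passing to the limit in a finite partial sum. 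Only one cosmetic point: in the $\eps$--$N$ step you should observe that the bound $\bigl|\fa(\xi,x)-\sum_{n=1}^N\fa_n(\xi,x)\bigr|\le\eps$ holds for \emph{all} sufficiently large $N$ (which norm convergence gives at once), since that, rather than a single $N$ per $\eps$, is what convergence of the series at $(\xi,x)$ means.
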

\subsection{Products of Mellin Pseudodifferential Operators}
Applying the relation
\begin{equation}\label{eq:translation-PDO}
\Op(\fa)=E^{-1}a(x,D)E
\end{equation}
between the Mellin pseudodifferential operator $\Op(\fa)$ and the Fourier
pseudodifferential operator $a(x,D)$ considered in \cite{K06}, where
\begin{equation}\label{eq:translation-symbols}
\fa(t,x)=a(\ln t,x),\quad(t,x)\in\mR_+\times\mR,
\end{equation}
and $E$ is given by \eqref{eq:def-E}, we infer from
\cite[Theorem~8.3]{K06} the following compactness result.
\begin{thm}\label{th:comp-semi-commutators-PDO}
If $\fa,\fb\in\cE(\mR_+,V(\mR))$, then
\[
\operatorname{Op}(\fa)\operatorname{Op}(\fb)\simeq \operatorname{Op}(\fa\fb).
\]
\end{thm}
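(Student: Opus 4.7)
The plan is to reduce the statement to its Fourier analogue \cite[Theorem~8.3]{K06} by means of the change of variables $t=e^y$ implemented by the isometric isomorphism $E:L^p(\mR_+,d\mu)\to L^p(\mR)$ from \eqref{eq:def-E}. By \eqref{eq:translation-PDO}--\eqref{eq:translation-symbols}, a Mellin symbol $\fa\in\cE(\mR_+,V(\mR))$ corresponds to the Fourier symbol $a(y,x):=\fa(e^y,x)$ via $\Op(\fa)=E^{-1}a(x,D)E$, and similarly $\Op(\fb)=E^{-1}b(x,D)E$ with $b(y,x):=\fb(e^y,x)$. Since the pointwise product $\fa\fb$ transfers to the pointwise product $ab$, one also has $\Op(\fa\fb)=E^{-1}(ab)(x,D)E$.

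Next, I would verify that the substitution $\fa\mapsto a$ carries the algebra $\cE(\mR_+,V(\mR))$ into the Fourier symbol algebra $\widetilde{\cE}(\mR,V(\mR))$ used in \cite[Theorem~8.3]{K06}. This is a bookkeeping check: the diffeomorphism $y\mapsto e^y$ maps intervals of the form $[\rho,\rho+\ln 2]$ onto intervals $[e^\rho,2e^\rho]$, and as $\rho\to\pm\infty$ one has $e^\rho\to\infty$ or $e^\rho\to 0$; therefore the slow-oscillation condition on $\fa$ at $0$ and $\infty$ (defined via $\operatorname{cm}_r^C(\fa)$) translates verbatim into slow oscillation of $a$ at $\mp\infty$. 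The uniform continuity condition
\[
\lim_{|h|\to 0}\sup_{t\in\mR_+}\big\|\fa(t,\cdot)-\fa^h(t,\cdot)\big\|_V=0
\]
involves only translation in the second argument and hence is preserved intact by $t\mapsto e^y$.

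Once the symbol-class identification is in place, I would invoke \cite[Theorem~8.3]{K06} to obtain
\[
a(x,D)\,b(x,D)-(ab)(x,D)\in\cK(L^p(\mR)).
\]
Conjugating by the isometric isomorphism $E$ preserves compactness, so
\[
\Op(\fa)\Op(\fb)-\Op(\fa\fb)=E^{-1}\bigl[a(x,D)b(x,D)-(ab)(x,D)\bigr]E\in\cK(L^p(\mR_+,d\mu)),
\]
which is exactly the claim $\Op(\fa)\Op(\fb)\simeq\Op(\fa\fb)$.

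The only nontrivial point is the symbol-class translation in the second paragraph; the rest is mechanical transfer of a known Fourier-pseudodifferential compactness result through the unitary equivalence provided by the Mellin-to-Fourier change of variable. No new analytic estimates are needed beyond those already established in \cite{K06}.
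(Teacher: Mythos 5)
Your argument is exactly the paper's: the theorem is obtained by conjugating with the isometric isomorphism $E$ from \eqref{eq:def-E}, using \eqref{eq:translation-PDO}--\eqref{eq:translation-symbols} to identify Mellin and Fourier symbols, and then invoking \cite[Theorem~8.3]{K06}, with compactness preserved under conjugation. Your extra bookkeeping on how the change of variables $t=e^y$ carries the symbol class (slow oscillation at $0,\infty$ versus $\mp\infty$, and invariance of the translation-continuity condition) is correct and consistent with what the paper leaves implicit.
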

From
\eqref{eq:def-E}, \eqref{eq:translation-PDO}--\eqref{eq:translation-symbols},
\cite[Lemmas~7.1,~7.2]{K06}, and the proof of \cite[Lemma~8.1]{K06}
we can extract the following.
\begin{lem}\label{le:PDO-3-operators}
If $\fa,\fb,\fc\in\cE(\mR_+,V(\mR))$ are such that $\fa$ depends only on the
first variable and $\fc$ depends only on the second variable, then
\[
\operatorname{Op}(\fa)\operatorname{Op}(\fb)\operatorname{Op}(\fc)
=
\operatorname{Op}(\fa\fb\fc).
\]
\end{lem}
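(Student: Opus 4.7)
The plan is to exploit the special structure of the three factors: when $\fa$ depends only on $t$, the operator $\Op(\fa)$ is just multiplication by the function $t\mapsto\fa(t)$, and when $\fc$ depends only on $x$, the operator $\Op(\fc)=\Co(\fc)$ is a pure Mellin convolution. In contrast to the general composition result in Theorem~\ref{th:comp-semi-commutators-PDO}, which holds only modulo compact operators, I expect the claim here to be an \emph{exact} identity, because a composition of a multiplier, a PDO, and a convolution should collapse under Fubini's theorem without any remainder term.

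Concretely, I would first translate the statement to the Fourier setting via the isometry $E$ of \eqref{eq:def-E} and the correspondences \eqref{eq:translation-PDO}, \eqref{eq:translation-symbols}. Writing $a(y,\xi):=\fa(e^y,\xi)$, $b(y,\xi):=\fb(e^y,\xi)$, $c(y,\xi):=\fc(e^y,\xi)$, the hypotheses become that $a$ depends only on $y$ and $c$ only on $\xi$, so that $a(y,D)$ is multiplication by $a$ and $c(y,D)=\cF^{-1}c\cF$. The Mellin claim is then equivalent to
\[
a(y,D)\,b(y,D)\,c(y,D)=(abc)(y,D)
\]
on $L^p(\mR)$. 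This reduces to two sub-identities: (i) $aI\cdot b(y,D)=(ab)(y,D)$, obtained by pulling the scalar $a(y)$ inside the iterated integral defining $b(y,D)$; and (ii) $b(y,D)\cdot\cF^{-1}c\cF=(bc)(y,D)$, obtained by commuting the Fourier multiplier $c(D)$ past the integral in $\xi$ and using $\cF c(D)\cF^{-1}=c(\cdot)\,\cdot$. Both are exactly \cite[Lemmas~7.1,~7.2]{K06}. Composing these two identities (which is legitimate because the product of the multipliers and of the convolution symbol with $b\in\cE(\mR,V(\mR))$ again lies in $\cE(\mR,V(\mR))$, by the Banach-algebra property noted around Theorem~\ref{th:comp-semi-commutators-PDO}) gives the Fourier identity. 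Pulling back through $E^{-1}$ and invoking \eqref{eq:translation-symbols} yields the stated Mellin identity on the dense subspace $E^{-1}C_0^\infty(\mR)$, and hence on all of $L^p(\mR_+,d\mu)$ by the boundedness Theorem~\ref{th:boundedness-PDO}.

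The main subtle point, and the one place where the argument would differ from a quick symbolic manipulation, is justifying the interchanges of integration in (i) and (ii) under the mild smoothness hypothesis $\fa,\fb,\fc\in\cE(\mR_+,V(\mR))$ rather than $C^\infty$. This is exactly why one needs to cite \cite[Lemmas~7.1,~7.2]{K06} and the proof of \cite[Lemma~8.1]{K06}: those lemmas establish the required Fubini-type manipulations under the $V(\mR)$-valued slow-oscillation framework and confirm that the resulting symbol products belong to $\cE$, so that the composed operators are themselves bona fide Mellin PDOs in the sense of Theorem~\ref{th:boundedness-PDO}. Once this is granted, no further estimates are needed; the identity holds \emph{exactly}, not merely modulo compact operators.
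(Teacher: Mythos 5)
Your argument is correct and is essentially the same as the paper's: the paper does not give a detailed proof but states that the lemma is extracted from \eqref{eq:def-E}, \eqref{eq:translation-PDO}--\eqref{eq:translation-symbols}, \cite[Lemmas~7.1,~7.2]{K06}, and the proof of \cite[Lemma~8.1]{K06}, which is precisely your route of conjugating by $E$, splitting the exact identity into the multiplication-times-PDO and PDO-times-convolution steps covered by those two lemmas, and pulling back to the Mellin setting. Your added remarks on density, boundedness, and the algebra property of $\cE(\mR_+,V(\mR))$ just make explicit what the citation leaves implicit.
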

\subsection{Fredholmness of Mellin Pseudodifferential Operators}
To study the Fredholmness of Mellin pseudodifferential operators, we need the
Banach algebra $\widetilde{\cE}(\mR_+,V(\mR))$ consisting of all functions
$\fa$ belonging to $\cE(\mR_+,V(\mR))$ and such that
\[
\lim_{m\to\infty}\sup_{t\in\mR_+}\int\limits_{\mR\setminus[-m,m]}
\left|\frac{\partial \fa(t,x)}{\partial x}\right|\,dx=0.
\]

Now we are in a position to formulate the main result of this section.
\begin{thm}[{\cite[Theorem~5.8]{KKL14b}}]
\label{th:Fredholmness-PDO}
Suppose $\fa\in\widetilde{\cE}(\mR_+,V(\mR))$.
\begin{enumerate}
\item[{\rm(a)}]
If the Mellin pseudodifferential operator $\Op(\fa)$ is Fredholm on the space
$L^p(\mR_+,d\mu)$, then
\begin{equation}\label{eq:Fredholmness-PDO-1}
\fa(t,\pm\infty)\ne 0
\ \text{ for all }\
t\in\mR_+,
\quad
\fa(\xi,x)\ne 0
\ \text{ for all }\
(\xi,x)\in\Delta\times\overline{\mR}.
\end{equation}

\item[{\rm(b)}]
If \eqref{eq:Fredholmness-PDO-1} holds, then the Mellin pseudodifferential
operator $\Op(\fa)$ is Fredholm on the space $L^p(\mR_+,d\mu)$ and each its
regularizer has the form $\Op(\fb)+K$, where $K$ is a compact operator
on the space $L^p(\mR_+,d\mu)$ and $\fb\in\widetilde{\cE}(\mR_+,V(\mR))$
is such that
\begin{align*}
&\fb(t,\pm\infty)=1/\fa(t,\pm\infty)
\mbox{ for all }\ t\in\mR_+,
\\
&\fb(\xi,x)=1/\fa(\xi,x)
\mbox{ for all }\ (\xi,x)\in\Delta\times\overline{\mR}.
\end{align*}
\end{enumerate}
\end{thm}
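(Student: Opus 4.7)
The plan is to transfer the statement to the Fourier pseudodifferential setting by means of the isometric isomorphism $E:L^p(\mR_+,d\mu)\to L^p(\mR)$ of \eqref{eq:def-E}, using the correspondence $\Op(\fa)=E^{-1}a(x,D)E$ with $\fa(t,x)=a(\ln t,x)$ recalled in \eqref{eq:translation-PDO}--\eqref{eq:translation-symbols}. Under this correspondence, $\widetilde{\cE}(\mR_+,V(\mR))$ maps bijectively onto the Fourier symbol algebra treated in \cite{K06}, the fibers $M_0(SO(\mR_+))$ and $M_\infty(SO(\mR_+))$ correspond to the fibers at $-\infty$ and $+\infty$ of that Fourier-side algebra, and compactness on $L^p(\mR_+,d\mu)$ corresponds to compactness on $L^p(\mR)$. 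The task therefore reduces to proving the analogous Fredholm criterion for Fourier PDOs with symbols of limited smoothness, and then pulling it back; I sketch how each direction would be carried out.

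For part (a), the natural route is a singular sequence argument. Suppose $\Op(\fa)$ is Fredholm and, for contradiction, that $\fa(t_0,+\infty)=0$ at some $t_0\in\mR_+$. Fix a Schwartz cutoff $\varphi$ with $\|\varphi\|_p=1$ and define
\[
f_n(t):=n^{1/p}\varphi\bigl(n\ln(t/t_0)\bigr)e^{iN_n\ln t},
\qquad N_n\to+\infty,
\]
so that $\{f_n\}$ is bounded in $L^p(\mR_+,d\mu)$ and tends weakly to zero. The boundedness supplied by Theorem~\ref{th:boundedness-PDO}, the continuity built into $\widetilde{\cE}(\mR_+,V(\mR))$, and a routine oscillatory-integral estimate then yield
\[
\Op(\fa)f_n-\fa(t_0,+\infty)f_n\to 0\quad\mbox{in}\quad L^p(\mR_+,d\mu),
\]
contradicting Fredholmness since $\|f_n\|_p=1$ and $\fa(t_0,+\infty)=0$. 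The case $\fa(t_0,-\infty)=0$ uses $N_n\to-\infty$, and the case $\fa(\xi_0,x_0)=0$ with $\xi_0\in\Delta$, $x_0\in\overline{\mR}$ uses Lemma~\ref{le:values} to extract $t_j\to s\in\{0,\infty\}$ with $\fa(t_j,x_0)\to 0$ and then builds a two-scale singular sequence localized at $t_j$ with frequency $x_0$.

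For part (b), the strategy is to construct a parametrix by inverting the symbol at the essential boundary. Since \eqref{eq:Fredholmness-PDO-1} does not force $\fa$ to be nonvanishing on the interior $\mR_+\times\mR$, I would first modify $\fa$ by a perturbation supported away from the essential boundary to obtain $\widetilde{\fa}\in\widetilde{\cE}(\mR_+,V(\mR))$ with $\inf|\widetilde{\fa}|>0$ and with the same boundary values as $\fa$. Then $\fb:=1/\widetilde{\fa}$ lies in $\widetilde{\cE}(\mR_+,V(\mR))$, since $V(\mR)$ is a Banach algebra closed under inversion of elements bounded away from zero and the $x$-derivative integrability defining $\widetilde{\cE}$ is preserved by the quotient rule. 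Theorem~\ref{th:comp-semi-commutators-PDO} then yields $\Op(\fb)\Op(\widetilde{\fa})\simeq\Op(\fb\widetilde{\fa})=\Op(1)=I$, and symmetrically $\Op(\widetilde{\fa})\Op(\fb)\simeq I$. The main obstacle is showing that $\Op(\fa-\widetilde{\fa})$ is compact: since $\fa-\widetilde{\fa}$ has zero boundary values at every point of the essential boundary, one approximates it in the norm of $C_b(\mR_+,V(\mR))$ by symbols compactly supported in $(t,x)$ (whose Op is easily seen to be compact by a Fr\'echet--Kolmogorov argument) and passes to the norm limit using Theorem~\ref{th:boundedness-PDO}. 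The description of $\fb$ at the essential boundary follows from Lemma~\ref{le:values-sum-product} applied to $\fb\widetilde{\fa}=1$, and the uniqueness of the regularizer modulo $\cK(L^p(\mR_+,d\mu))$ is immediate from the Fredholm formalism recalled in the introduction.
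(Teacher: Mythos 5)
First, a point of comparison: the paper does not prove this theorem at all --- it is imported verbatim, part (a) being attributed to \cite[Theorem~4.3]{K09} and part (b) being the main result of \cite{KKL14b}. So your proposal is measured against that external proof, and while your sketch of part (a) (singular sequences concentrating at a point $t_0$ with frequencies $N_n\to\pm\infty$, resp.\ at $t_j\to s$ chosen via Lemma~\ref{le:values} for fiber points) is a plausible route in the spirit of \cite{K09}, your argument for part (b) has a genuine, fatal gap.

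The gap is topological. You want to replace $\fa$ by $\widetilde{\fa}\in\widetilde{\cE}(\mR_+,V(\mR))$ with the \emph{same} boundary values (at $(\mR_+\cup\Delta)\times\{\pm\infty\}$ and on $\Delta\times\overline{\mR}$) and with $\inf|\widetilde{\fa}|>0$, and then invert it. Such a modification need not exist: the boundary data form a closed curve on the boundary of the compactification of $\mR_+\times\mR$, and if that curve has nonzero winding number about the origin, every continuous extension with those boundary values must vanish somewhere in the interior. This is not a pathological case --- it is exactly the case of nonzero index, which hypothesis \eqref{eq:Fredholmness-PDO-1} does allow. Concretely, take $\fa(t,x)=c(t)p_2^+(x)+p_2^-(x)$ with $c$ continuous on $\overline{\mR}_+$, nonvanishing, $c(0)=c(\infty)=1$, and winding number $1$; this $\fa$ lies in $\widetilde{\cE}(\mR_+,V(\mR))$ by Lemma~\ref{le:g-sp-rp}, satisfies \eqref{eq:Fredholmness-PDO-1} (its fiber values are identically $1$), yet it vanishes at interior points (the curve $c$ must cross the negative half-axis, where $-p_2^-(x)/p_2^+(x)$ takes every negative value), and no nonvanishing $\widetilde{\fa}$ with the same boundary values exists; correspondingly $\Op(\fa)$ is Fredholm with index $\mp 1$, which your scheme could never produce since it would make $\Op(\fa)$ a compact perturbation of an ``invertible-modulo-compacts by symbol inversion'' operator built from data that cannot exist. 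A secondary, independent weak point: even granting $\widetilde{\fa}$, your compactness step for $\Op(\fa-\widetilde{\fa})$ approximates a symbol that merely \emph{vanishes pointwise} on the essential boundary by compactly supported symbols \emph{in the norm of} $C_b(\mR_+,V(\mR))$; vanishing of values does not control the total-variation part of the $V(\mR)$-norm of the tails, so this approximation is not routine and would itself require the kind of careful analysis that constitutes the actual content of \cite{KKL14b}. In short, part (b) cannot be obtained by naive symbol inversion away from the boundary; the construction of a regularizer of the form $\Op(\fb)+K$ with $\fb$ in the algebra and prescribed boundary data is precisely the nontrivial result being quoted.
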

Note that part (a) follows from \cite[Theorem~4.3]{K09} and part (b)
is the main result of \cite{KKL14b}.
\section{Applications of Mellin Pseudodifferential Operators}
\label{sec:applications-Mellin-PDO}
\subsection{Some Important Functions in the Algebra $\widetilde{\cE}(\mR_+,V(\mR))$}
\begin{lem}[{\cite[Lemma~4.2]{KKL14a}}]
\label{le:g-sp-rp}
Let $g\in SO(\mR_+)$. Then for every $y\in(1,\infty)$ the functions
\[
\fg(t,x):=g(t),
\quad
\fs_y(t,x):=s_y(x),
\quad
\fr_y(t,x):=r_y(x),
\quad
(t,x)\in\mR_+\times\mR,
\]
belong to the Banach algebra $\widetilde{\cE}(\mR_+,V(\mR))$.
\end{lem}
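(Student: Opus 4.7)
\medskip

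\noindent\textbf{Proof plan.} The lemma splits naturally into the $t$-dependent case ($\fg$) and the $x$-dependent case ($\fs_y,\fr_y$), and the two are essentially orthogonal. The plan is to verify, in each case, the three successive memberships $C_b(\mR_+,V(\mR))\supset SO(\mR_+,V(\mR))\supset\cE(\mR_+,V(\mR))\supset\widetilde{\cE}(\mR_+,V(\mR))$.

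\medskip

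\noindent\textbf{Step 1 (the function $\fg$).} For each $t\in\mR_+$ the slice $\fg(t,\cdot)$ is the constant $g(t)$, so $\|\fg(t,\cdot)\|_V=|g(t)|\le\|g\|_{L^\infty(\mR_+)}$, giving $\fg\in C_b(\mR_+,V(\mR))$. Since
\[
\|\fg(t,\cdot)-\fg(\tau,\cdot)\|_{L^\infty(\mR)}=|g(t)-g(\tau)|,
\]
the slow oscillation of $g$ at $0$ and $\infty$ is exactly $\operatorname{cm}_r^C(\fg)\to 0$ as $r\to 0$ and $r\to\infty$, so $\fg\in SO(\mR_+,V(\mR))$. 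Finally, $\fg^h=\fg$ identically and $\partial_x\fg\equiv 0$, so both the $\cE$-condition and the $\widetilde{\cE}$-condition are automatic.

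\medskip

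\noindent\textbf{Step 2 (the functions $\fs_y$ and $\fr_y$).} Here I would use the explicit formulas
\[
s_y(x)=\coth\bigl[\pi(x+i/y)\bigr],
\qquad
r_y(x)=1/\sinh\bigl[\pi(x+i/y)\bigr].
\]
Because $y\in(1,\infty)$, the quantity $\pi(x+i/y)$ stays away from the poles of $\coth$ and $1/\sinh$, so both functions are $C^\infty$ on $\mR$; moreover $s_y(x)\to\pm 1$ and $r_y(x)\to 0$ as $x\to\pm\infty$, while the derivatives
\[
s_y'(x)=-\pi/\sinh^2\bigl[\pi(x+i/y)\bigr],
\qquad
r_y'(x)=-\pi\cosh\bigl[\pi(x+i/y)\bigr]/\sinh^2\bigl[\pi(x+i/y)\bigr]
\]
decay exponentially as $|x|\to\infty$. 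Hence $s_y,r_y\in V(\mR)$, and since neither depends on $t$, the slices are constant in $t$, placing $\fs_y,\fr_y$ in $SO(\mR_+,V(\mR))$ trivially.

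\medskip

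\noindent\textbf{Step 3 (the $\cE$- and $\widetilde{\cE}$-conditions for $\fs_y,\fr_y$).} Writing $a$ for either $s_y$ or $r_y$, I need
\[
\lim_{|h|\to 0}\|a-a^h\|_V=\lim_{|h|\to 0}\Bigl(\|a-a^h\|_{L^\infty(\mR)}+\int_\mR|a'(x+h)-a'(x)|\,dx\Bigr)=0.
\]
The uniform part follows from the uniform continuity of $a$, and the total-variation part is the continuity of translation on $L^1(\mR)$ applied to $a'$, which is justified by the exponential decay of $a'$ computed in Step~2. This gives $\fs_y,\fr_y\in\cE(\mR_+,V(\mR))$. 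For the stronger $\widetilde{\cE}$-condition, the exponential decay of $a'$ yields
\[
\sup_{t\in\mR_+}\int_{\mR\setminus[-m,m]}|\partial_x a(x)|\,dx=\int_{|x|>m}|a'(x)|\,dx\xrightarrow[m\to\infty]{}0,
\]
completing the verification. The only place where any care is required is the continuity of translation in the $V$-norm, which is why I reduce it to the standard $L^1$-continuity of translation of $a'$; everything else is elementary bookkeeping against the explicit formulas.
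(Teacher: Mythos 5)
Your proof is correct and is the natural direct verification of the definitions: constant-in-$x$ slices dispose of $\fg$, while for $\fs_y$ and $\fr_y$ the explicit formulas, the absence of poles of $\coth$ and $1/\sinh$ on $\pi(\mR+i/y)$ for $y\in(1,\infty)$, the exponential decay of $s_y'$ and $r_y'$, and $L^1$-continuity of translation give membership in $\widetilde{\cE}(\mR_+,V(\mR))$. The paper itself offers no proof here, quoting the statement from \cite{KKL14a}, and your argument is essentially the same routine verification carried out there, so there is nothing to object to beyond noting that continuity of $t\mapsto\fg(t,\cdot)$ in the $V$-norm (which is just continuity of $g$) should be mentioned when placing $\fg$ in $C_b(\mR_+,V(\mR))$.
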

\begin{lem}[{\cite[Lemma~4.3]{KKL14a}}]
\label{le:fb}
Suppose $\omega\in SO(\mR_+)$ is a real-valued function. Then for every
$y\in(1,\infty)$ the function
\[
\fb(t,x):=e^{i\omega(t)x}r_y(x),
\quad (t,x)\in\mR_+\times\mR,
\]
belongs to the Banach algebra $\widetilde{\cE}(\mR_+,V(\mR))$ and
there is a positive constant $C(y)$ depending only on $y$ such that
\[
\|\fb\|_{C_b(\mR_+,V(\mR))}
\le
C(y)\left(1+\sup_{t\in\mR_+}|\omega(t)|\right).
\]
\end{lem}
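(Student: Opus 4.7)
The plan is to verify one-by-one the defining conditions for membership in $\widetilde{\cE}(\mR_+,V(\mR))$---boundedness and continuity as a $V(\mR)$-valued function, slow oscillation, shift-continuity, and tail decay of the $x$-derivative---exploiting throughout that $r_y(x)=1/\sinh(\pi(x+i/y))$ decays exponentially at $\pm\infty$. Indeed, the identity $|\sinh(\pi(x+i/y))|^2=\sinh^2(\pi x)+\sin^2(\pi/y)$ shows that both $r_y$ and its derivative $r_y'$ belong to $L^1(\mR)\cap L^\infty(\mR)$, so in particular $V(r_y)=\|r_y'\|_{L^1(\mR)}<\infty$.

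First I would obtain the norm bound from $\partial_x\fb(t,x)=i\omega(t)e^{i\omega(t)x}r_y(x)+e^{i\omega(t)x}r_y'(x)$ together with $|e^{i\omega(t)x}|=1$, which gives
\[
\|\fb(t,\cdot)\|_V\le\|r_y\|_{L^\infty(\mR)}+|\omega(t)|\|r_y\|_{L^1(\mR)}+V(r_y),
\]
and hence the asserted estimate with, say, $C(y):=\|r_y\|_{L^\infty(\mR)}+\|r_y\|_{L^1(\mR)}+V(r_y)$. Continuity of $t\mapsto\fb(t,\cdot)$ in $V(\mR)$ will follow by the same factorization trick as in Step~3 below applied to $\fb(t,\cdot)-\fb(\tau,\cdot)$. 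Second, to place $\fb$ in $SO(\mR_+,V(\mR))$, I would split $\|\fb(t,\cdot)-\fb(\tau,\cdot)\|_{L^\infty(\mR)}=\sup_x|e^{i\omega(t)x}-e^{i\omega(\tau)x}||r_y(x)|$ at some $|x|=M$: on the bounded range use $|e^{i\omega(t)x}-e^{i\omega(\tau)x}|\le M|\omega(t)-\omega(\tau)|$ together with $\omega\in SO(\mR_+)$, and on the tail use the exponential decay of $r_y$. Third, the tail condition defining $\widetilde{\cE}$ is immediate from the $t$-uniform pointwise bound $|\partial_x\fb(t,x)|\le\|\omega\|_{L^\infty(\mR_+)}|r_y(x)|+|r_y'(x)|$ together with $r_y,r_y'\in L^1(\mR)$.

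The technically delicate step, which I expect to be the main obstacle, is the shift-continuity condition $\lim_{|h|\to 0}\sup_t\|\fb(t,\cdot)-\fb^h(t,\cdot)\|_V=0$ required for $\fb\in\cE(\mR_+,V(\mR))$: a naive estimate loses uniformity in $t$ because the translation $x\mapsto x+h$ pulls out a $t$-dependent phase $e^{i\omega(t)h}$. My plan is to rewrite
\[
\fb(t,x)-\fb(t,x+h)=e^{i\omega(t)x}\bigl[(r_y(x)-r_y(x+h))+(1-e^{i\omega(t)h})r_y(x+h)\bigr],
\]
apply the $V$-estimate from the first step summand-by-summand with the unimodular factor $e^{i\omega(t)\cdot}$ factored out, and use $|1-e^{i\omega(t)h}|\le\|\omega\|_{L^\infty(\mR_+)}|h|$ to reduce matters to
\[
\|r_y-r_y(\cdot+h)\|_{L^\infty(\mR)}+\|r_y-r_y(\cdot+h)\|_{L^1(\mR)}+\|r_y'-r_y'(\cdot+h)\|_{L^1(\mR)}+|h|,
\]
which tends to $0$ as $|h|\to 0$ by uniform continuity of $r_y$ and continuity of translation in $L^1(\mR)$ (applied to both $r_y$ and $r_y'$). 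That this bound is genuinely independent of $t$ is precisely what the factorization is engineered to ensure.
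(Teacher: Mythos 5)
Your argument is correct, and it is worth noting that the paper itself offers no proof of this statement: it is imported by citation from \cite[Lemma~4.3]{KKL14a}, so there is no in-paper argument to compare against. Your direct verification of the defining conditions is the natural route and all the steps check out: the identity $|\sinh(\pi(x+i/y))|^2=\sinh^2(\pi x)+\sin^2(\pi/y)$ with $\sin(\pi/y)>0$ does give $r_y,r_y'\in L^1(\mR)\cap L^\infty(\mR)$ with exponential decay; the product-rule bound $\|e^{i\omega(t)\cdot}g\|_V\le\|g\|_{L^\infty(\mR)}+\|\omega\|_{L^\infty(\mR_+)}\|g\|_{L^1(\mR)}+V(g)$ (legitimate because $g\in L^1$, even though $e^{i\omega(t)\cdot}\notin V(\mR)$) yields the stated norm estimate with an admissible $C(y)$, since $\omega\in SO(\mR_+)$ is bounded by definition; the slow-oscillation condition only involves the $L^\infty$-seminorm, so your split at $|x|=M$ using $|e^{i\omega(t)x}-e^{i\omega(\tau)x}|\le M|\omega(t)-\omega(\tau)|$ plus the decay of $r_y$ suffices; and your factorization $\fb(t,x)-\fb(t,x+h)=e^{i\omega(t)x}[(r_y(x)-r_y(x+h))+(1-e^{i\omega(t)h})r_y(x+h)]$ correctly isolates the $t$-dependence into the harmless factors $\|\omega\|_{L^\infty(\mR_+)}$ and $|1-e^{i\omega(t)h}|\le\|\omega\|_{L^\infty(\mR_+)}|h|$, so the shift-continuity bound is genuinely uniform in $t$, while the tail condition defining $\widetilde{\cE}(\mR_+,V(\mR))$ follows from the $t$-uniform majorant $\|\omega\|_{L^\infty(\mR_+)}|r_y|+|r_y'|\in L^1(\mR)$. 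The only point to make explicit when writing this up is the $t$-continuity of $t\mapsto\fb(t,\cdot)$ in the $V$-norm, which you correctly indicate follows from the same factorization applied with $\delta=\omega(t)-\omega(\tau)$ and dominated convergence.
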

\subsection{Operator $U_\gamma R_y$}
\begin{lem}[{\cite[Lemma~4.4]{KKL14a}}]
\label{le:shift-R-exact}
Let $\gamma\in SOS(\mR_+)$ and $U_\gamma$ be the associated isometric shift
operator on $L^p(\mR_+)$. For every $y\in(1,\infty)$, the operator $U_\gamma R_y$
can be realized as the Mellin pseudodifferential operator:
\[
U_\gamma R_y = \Phi^{-1}\Op (\fd) \Phi,
\]
where the function $\fd$, given for $(t,x)\in\mR_+\times\mR$ by
\[
\fd(t,x):=(1+t\psi'(t))^{1/p} e^{i\psi(t)x}r_y(x)
\quad\mbox{with}\quad
\psi(t):=\log[\gamma(t)/t],
\]
belongs to the algebra $\widetilde{\cE}(\mR_+,V(\mR))$.
\end{lem}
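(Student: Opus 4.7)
The plan is to reduce the composite operator $U_\gamma R_y$ to a Mellin pseudodifferential operator by conjugating with $\Phi$, exploiting Lemma~\ref{le:alg-A} for $R_y$ and computing the shift part directly via the Mellin inversion formula.

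First I would invoke Lemma~\ref{le:alg-A} to rewrite $R_y = \Phi^{-1}\Co(r_y)\Phi$, so that
\[
\Phi U_\gamma R_y \Phi^{-1} = (\Phi U_\gamma \Phi^{-1})\,\Co(r_y).
\]
A short computation using $(\Phi f)(t)=t^{1/p}f(t)$ and $U_\gamma f=(\gamma')^{1/p}(f\circ\gamma)$ shows that
\[
(\Phi U_\gamma \Phi^{-1} f)(t) = \bigl(t\gamma'(t)/\gamma(t)\bigr)^{1/p} f(\gamma(t)) = (1+t\psi'(t))^{1/p} f(\gamma(t)),
\]
where the last equality uses $\gamma(t)=te^{\psi(t)}$ and the identification $\psi=\omega$ from Lemma~\ref{le:exponent-shift}. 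Hence $\Phi U_\gamma\Phi^{-1}=M_a V_\gamma$ with $a(t):=(1+t\psi'(t))^{1/p}$ and $(V_\gamma f)(t):=f(\gamma(t))$.

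Next I would compute $V_\gamma \Co(r_y)$ directly. For $f\in C_0^\infty(\mR_+)$ the Mellin inversion gives
\[
(\Co(r_y)f)(\tau) = \frac{1}{2\pi}\int_{\mR} r_y(x)\,\tau^{ix}(\cM f)(x)\,dx,
\]
and substituting $\tau=\gamma(t)=te^{\psi(t)}$ yields $\tau^{ix}=t^{ix}e^{i\psi(t)x}$. Replacing $(\cM f)(x)$ by its defining integral and using Fubini, one obtains
\[
(V_\gamma \Co(r_y) f)(t) = \frac{1}{2\pi}\int_{\mR}dx\int_{\mR_+}\fv(t,x)\Bigl(\frac{t}{\tau}\Bigr)^{ix} f(\tau)\frac{d\tau}{\tau}
\]
with $\fv(t,x):=e^{i\psi(t)x}r_y(x)$; that is, $V_\gamma \Co(r_y)=\Op(\fv)$. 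Since multiplication by $a(t)$ is trivially expressible as $\Op(a)$ and pulls inside the outer integral, one concludes
\[
\Phi U_\gamma R_y \Phi^{-1} = M_a\,\Op(\fv) = \Op(\fd),\qquad \fd(t,x)=(1+t\psi'(t))^{1/p}e^{i\psi(t)x}r_y(x),
\]
which is the claimed representation; this step is exact, not merely modulo compact operators, because one factor of the symbol depends only on $t$.

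It remains to show $\fd\in\widetilde{\cE}(\mR_+,V(\mR))$. I would factor $\fd(t,x)=g(t)\cdot\fb(t,x)$, where $g(t):=(1+t\psi'(t))^{1/p}$ and $\fb(t,x):=e^{i\psi(t)x}r_y(x)$. By Lemma~\ref{le:exponent-shift} applied to $\gamma\in SOS(\mR_+)$, the function $t\psi'(t)$ belongs to $SO(\mR_+)$ with $\inf_{t}(1+t\psi'(t))>0$, so $g\in SO(\mR_+)$ and Lemma~\ref{le:g-sp-rp} places $g\otimes 1$ in $\widetilde{\cE}(\mR_+,V(\mR))$; the function $\fb$ lies in $\widetilde{\cE}(\mR_+,V(\mR))$ directly by Lemma~\ref{le:fb}. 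Since $\widetilde{\cE}(\mR_+,V(\mR))$ is a Banach algebra, the product $\fd=g\cdot\fb$ belongs to it, completing the proof.

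The main technical obstacle is the transition from the singular integral defining $R_y$ to a clean Mellin-PDO expression after composition with the shift: one must interchange integrals and substitute $\tau=\gamma(t)$ inside an oscillatory kernel, which requires working on a dense class such as $C_0^\infty(\mR_+)$ and then extending by the boundedness provided by Theorem~\ref{th:boundedness-PDO} together with the bound for $\fb$ in Lemma~\ref{le:fb}.
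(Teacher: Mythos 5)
Your proposal is correct: the reduction $\Phi U_\gamma R_y\Phi^{-1}=M_a V_\gamma\Co(r_y)$ with $a(t)=(1+t\psi'(t))^{1/p}$, the substitution $\tau=\gamma(t)$ in the Mellin inversion formula on the dense class $C_0^\infty(\mR_+)$, and the factorization $\fd=g\cdot\fb$ handled via Lemmas~\ref{le:exponent-shift}, \ref{le:g-sp-rp}, \ref{le:fb} and the Banach-algebra property of $\widetilde{\cE}(\mR_+,V(\mR))$ all go through. The paper itself does not prove this lemma (it is imported from \cite{KKL14a}), but your argument is essentially the standard one and matches the machinery the paper uses around it, e.g.\ the same symbol factorization and the exact product rule of Lemma~\ref{le:PDO-3-operators} in the proofs of Lemmas~\ref{le:A-R} and~\ref{le:A-inverse-R}.
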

\subsection{Operator $(I-vU_\gamma)R_y$}
The previous lemma can be easily generalized to the case of operators containing
slowly oscillating coefficients.
\begin{lem}\label{le:A-R}
Let $y\in(1,\infty)$, $v\in SO(\mR_+)$, and $\gamma\in SOS(\mR_+)$.
Then
\begin{enumerate}
\item[{\rm(a)}]
the operator $(I-vU_\gamma) R_y$ can be realized as the Mellin pseudodifferential
operator:
\[
(I-vU_\gamma) R_y = \Phi^{-1}\Op (\fa) \Phi,
\]
where the function $\fa$, given for $(t,x)\in\mR_+\times\mR$ by
\[
\fa(t,x):=
\big(1-v(t)\big(\Psi(t)\big)^{1/p}e^{i\psi(t)x}\big)r_y(x)
\]
with $\psi(t):=\log[\gamma(t)/t]$ and $\Psi(t):=1+t\psi'(t)$,
belongs to $\widetilde{\cE}(\mR_+,V(\mR))$;

\item[{\rm(b)}]
we have
\[
\fa(\xi,x)=
\left\{\begin{array}{lll}
(1-v(\xi)e^{i\psi(\xi)x})r_y(x), &\mbox{if}& (\xi,x)\in\Delta\times\mR,
\\
0, &\mbox{if}& (\xi,x)\in(\mR_+\cup\Delta)\times\{\pm\infty\}.
\end{array}\right.
\]
\end{enumerate}
\end{lem}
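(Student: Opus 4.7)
My plan is to split $(I-vU_\gamma)R_y = R_y - (vI)(U_\gamma R_y)$ and realize each summand as a Mellin pseudodifferential operator conjugated by $\Phi$. By Lemma~\ref{le:alg-A}, $R_y = \Phi^{-1}\Op(\fr_y)\Phi$ with $\fr_y(t,x):=r_y(x)$. Conjugation by $\Phi$ (which is multiplication by $t^{1/p}$) fixes multiplication operators in the $t$-variable, so $vI = \Phi^{-1}\Op(\fv)\Phi$ with $\fv(t,x):=v(t)$. Finally, Lemma~\ref{le:shift-R-exact} gives $U_\gamma R_y = \Phi^{-1}\Op(\fd)\Phi$ with $\fd(t,x) = \Psi(t)^{1/p}e^{i\psi(t)x}r_y(x)$. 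By Lemma~\ref{le:g-sp-rp} the symbols $\fv$ and $\fr_y$ lie in $\widetilde{\cE}(\mR_+,V(\mR))$, and by Lemma~\ref{le:shift-R-exact} so does $\fd$.

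Next I would need the product $(vI)(U_\gamma R_y)$ to be a Mellin pseudodifferential operator with symbol $\fv\fd$ \emph{exactly}, not only modulo compact operators, so that the subtraction from $R_y$ leaves no compact error. I would factor $\fd = \fh\cdot\fr_y$ with $\fh(t,x):=\Psi(t)^{1/p}e^{i\psi(t)x}$ and invoke Lemma~\ref{le:PDO-3-operators} with outer factors $\fv$ (depending only on $t$) and $\fr_y$ (depending only on $x$): this yields $\Op(\fv)\Op(\fh)\Op(\fr_y) = \Op(\fv\fh\fr_y) = \Op(\fv\fd)$, and the same lemma applied with trivial leading factor identifies $\Op(\fh)\Op(\fr_y) = \Op(\fd)$, so that $\Op(\fv)\Op(\fd) = \Op(\fv\fd)$ exactly. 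Subtracting inside $\Phi^{-1}\Op(\cdot)\Phi$ then produces $(I-vU_\gamma)R_y = \Phi^{-1}\Op(\fa)\Phi$ with $\fa(t,x) = (1-v(t)\Psi(t)^{1/p}e^{i\psi(t)x})r_y(x)$, and $\fa\in\widetilde{\cE}(\mR_+,V(\mR))$ because this algebra is closed under sums and products. This settles part~(a).

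For part~(b), I would apply Lemma~\ref{le:values-sum-product} to the representation $\fa = \fr_y - \fv\fh\fr_y$, obtaining $\fa(\xi,x) = (1-v(\xi)\Psi(\xi)^{1/p}e^{i\psi(\xi)x})r_y(x)$ on $\Delta\times\mR$. The asserted formula then requires $\Psi(\xi)=1$ for every $\xi\in\Delta$, which I would deduce by combining Lemma~\ref{le:exponent-shift} (giving $\psi=\omega\in SO(\mR_+)\cap C^1(\mR_+)$ with $t\mapsto t\psi'(t)$ in $SO(\mR_+)$) with Lemma~\ref{le:SO-nec} (forcing $t\psi'(t)\to 0$ as $t\to 0,\infty$), so that the slowly oscillating function $\Psi = 1 + t\psi'$ takes the value $1$ at every functional in $\Delta$. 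The values at $(t,\pm\infty)$ for $t\in\mR_+$ and at $(\xi,\pm\infty)$ for $\xi\in\Delta$ vanish because $r_y(\pm\infty)=0$ while the accompanying factor is bounded in $V(\mR)$.

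I expect the most delicate step to be the exact-composition argument in the second paragraph: Theorem~\ref{th:comp-semi-commutators-PDO} alone would only yield $\Op(\fv)\Op(\fd)\simeq\Op(\fv\fd)$, and a compact error at this stage would spoil the exact pseudodifferential realization asserted in~(a). The factorization $\fd = \fh\fr_y$, which isolates the $x$-only dependence of $r_y$, is precisely what makes Lemma~\ref{le:PDO-3-operators} applicable; the remainder of the argument is routine manipulation inside $\widetilde{\cE}(\mR_+,V(\mR))$ together with the boundary-value computation.
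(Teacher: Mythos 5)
Your overall route is the same as the paper's: write $(I-vU_\gamma)R_y=R_y-v\,U_\gamma R_y$, realize the pieces via Lemma~\ref{le:alg-A} (or Lemma~\ref{le:g-sp-rp}) and Lemma~\ref{le:shift-R-exact}, and use Lemma~\ref{le:PDO-3-operators} to get the \emph{exact} (not merely modulo compacts) composition; for the boundary values, the key point $\Psi\to 1$ at $0$ and $\infty$ via Lemmas~\ref{le:SO-nec} and~\ref{le:exponent-shift} is exactly what the paper uses. However, the step you yourself flag as delicate is executed incorrectly: the auxiliary symbol $\fh(t,x):=\big(\Psi(t)\big)^{1/p}e^{i\psi(t)x}$ does \emph{not} belong to $\cE(\mR_+,V(\mR))$. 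For fixed $t$ one has $\partial_x\fh(t,x)=i\psi(t)\fh(t,x)$, so $V(\fh(t,\cdot))=\big(\Psi(t)\big)^{1/p}|\psi(t)|\cdot\int_\mR dx=\infty$ whenever $\psi(t)\neq 0$ — and $\psi(t)\neq0$ for every $t\in\mR_+$ since $\gamma$ has no fixed points in $\mR_+$. (This is precisely why Lemma~\ref{le:fb} keeps the decaying factor $r_y$ attached to the exponential.) Consequently Lemma~\ref{le:PDO-3-operators} cannot be invoked with middle factor $\fh$, the identity $\Op(\fh)\Op(\fr_y)=\Op(\fd)$ is not covered by the framework (even the boundedness of $\Op(\fh)$ is outside Theorem~\ref{th:boundedness-PDO}), and Lemma~\ref{le:values-sum-product} cannot be applied to the factorization $\fa=\fr_y-\fv\,\fh\,\fr_y$ in part (b).

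Both defects are repairable, and the repair is what the paper actually does. For (a), apply Lemma~\ref{le:PDO-3-operators} with $\fa=\fv$ (depending only on $t$), $\fb=\fd$ — which lies in $\widetilde{\cE}(\mR_+,V(\mR))$ by Lemma~\ref{le:shift-R-exact} — and $\fc\equiv 1$; equivalently, note that $\Op(\fv)$ is just multiplication by $v$, so $\Op(\fv)\Op(\fd)=\Op(\fv\fd)$ holds at once, and then $\fa=\fr_y-\fv\fd\in\widetilde{\cE}(\mR_+,V(\mR))$. For (b), do not factor out the exponential: apply Lemma~\ref{le:values} to the countable family $\{\fa,\fv,\widetilde{\psi}\}$ (with $\widetilde{\psi}(t,x):=\psi(t)$) to obtain, for a given $\xi\in M_s(SO(\mR_+))$, one sequence $t_j\to s$ along which simultaneously $v(t_j)\to v(\xi)$, $\psi(t_j)\to\psi(\xi)$ and $\fa(t_j,x)\to\fa(\xi,x)$; your observation that $\big(\Psi(t_j)\big)^{1/p}\to 1$ (Lemmas~\ref{le:SO-nec} and~\ref{le:exponent-shift}) then yields $\fa(\xi,x)=(1-v(\xi)e^{i\psi(\xi)x})r_y(x)$ on $\Delta\times\mR$, while the values at $x=\pm\infty$ vanish as you say. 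With these corrections your argument coincides with the paper's proof.
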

\begin{proof}
(a) This statement follows straightforwardly from Lemmas~\ref{le:g-sp-rp},
\ref{le:shift-R-exact}, and \ref{le:PDO-3-operators}.

(b) If $t\in\mR_+$, then obviously
\begin{equation}\label{eq:A-R-1}
\fa(t,x)=0
\quad\mbox{for}\quad x\in\{\pm\infty\}.
\end{equation}
By Lemma~\ref{le:exponent-shift}, $\psi\in SO(\mR_+)$. Since $v,\psi\in SO(\mR_+)$,
from Lemma~\ref{le:g-sp-rp} it follows that the functions
\begin{equation}\label{eq:A-R-2}
\fv(t,x):=v(t),
\quad
\widetilde{\psi}(t,x):=\psi(t),
\quad(t,x)\in\mR_+\times\mR,
\end{equation}
belong to $\widetilde{\cE}(\mR_+,V(\mR))$. Consider the finite family
$\{\fa,\fv,\widetilde{\psi}\}\in\widetilde{\cE}(\mR_+,V(\mR))$. Fix $s\in\{0,\infty\}$
and $\xi\in M_s(SO(\mR_+))$. By Lemma~\ref{le:values} and \eqref{eq:A-R-2}, there
is a sequence $\{t_j\}_{j\in\mN}\subset\mR_+$ and a function $\fa(\xi,\cdot)\in V(\mR_+)$
such that
\begin{equation}\label{eq:A-R-3}
\lim_{j\to\infty}t_j=s,
\quad
v(\xi)=\lim_{j\to\infty}v(t_j),
\quad
\psi(\xi)=\lim_{j\to\infty}\psi(t_j),
\end{equation}
\begin{equation}\label{eq:A-R-4}
\fa(\xi,x)=\lim_{j\to\infty}\fa(t_j,x),
\quad x\in\overline{\mR}.
\end{equation}
From Lemmas~\ref{le:SO-nec} and~\ref{le:exponent-shift}  we obtain
\begin{equation}\label{eq:A-R-5}
\lim_{j\to\infty}(\Psi(t_j))^{1/p}=1.
\end{equation}
From \eqref{eq:A-R-1} and \eqref{eq:A-R-4} we get
\[
\fa(\xi,x)=0
\quad\mbox{for}\quad (\xi,x)\in(\mR_+\cup\Delta)\times\{\pm\infty\}.
\]
Finally, from \eqref{eq:A-R-3}--\eqref{eq:A-R-5} we obtain for $(\xi,x)\in\Delta\times\mR$,
\begin{align*}
\fa(\xi,x)
&=\lim_{j\to\infty}\fa(t_j,x)
\\
&=\left(
1-\left(\lim_{j\to\infty}v(t_j)\right)
\left(\lim_{j\to\infty}(\Psi(t_j))^{1/p}\right)
\exp\left(ix\lim_{j\to\infty}\psi(t_j)\right)
\right)r_y(x)
\\
&=
(1-v(\xi)e^{i\psi(\xi)x})r_y(x),
\end{align*}
which completes the proof.
\end{proof}
\subsection{Operator $(I-vU_\gamma)^{-1}R_y$}
The following statement is crucial for our analysis. It says that the operators
$(I-vU_\gamma)R_y$ and  $(I-vU_\gamma)^{-1}R_y$ have similar nature.
\begin{lem}\label{le:A-inverse-R}
Let $y\in(1,\infty)$, $v\in SO(\mR_+)$, and $\gamma\in SOS(\mR_+)$. If $1\gg v$,
then
\begin{enumerate}
\item[{\rm(a)}]
the operator $A:=I-vU_\gamma$ is invertible on $L^p(\mR_+)$;

\item[{\rm(b)}]
the operator $A^{-1}R_y$ can be realized as the Mellin pseudodifferential
operator:
\begin{equation}\label{eq:A-inverse-R-1}
A^{-1}R_y=\Phi^{-1}\Op(\fc)\Phi,
\end{equation}
where the function $\fc$, given for $(t,x)\in\mR_+\times\mR$ by
\begin{align}\label{eq:A-inverse-R-2}
&&\quad
\fc(t,x):=r_y(x)+\sum_{n=1}^\infty
\left(\prod_{k=0}^{n-1} v[\gamma_k(t)]\big(\Psi[\gamma_k(t)]\big)^{1/p}e^{i\psi[\gamma_k(t)]x}\right)
r_y(x)
\end{align}
with $\psi(t):=\log[\gamma(t)/t]$ and $\Psi(t):=1+t\psi'(t)$,
belongs to $\widetilde{\cE}(\mR_+,V(\mR))$;

\item[{\rm(c)}]
we have
\begin{align*}
&&\
\fc(\xi,x)=\left\{\begin{array}{lll}
(1-v(\xi)e^{i\psi(\xi)x})^{-1}r_y(x), &\mbox{if}& (\xi,x)\in\Delta\times\mR,
\\
0, &\mbox{if}& (\xi,x)\in(\mR_+\cup\Delta)\times\{\pm\infty\}.
\end{array}\right.
\end{align*}
\end{enumerate}
\end{lem}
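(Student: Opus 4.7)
Part (a) is an immediate application of Lemma~\ref{le:FO}, which additionally provides the norm-convergent Neumann representation $A^{-1}=\sum_{n=0}^\infty(vU_\gamma)^n$ in $\cB(L^p(\mR_+))$. For parts (b) and (c) the plan is to post-multiply by $R_y$, identify each term $(vU_\gamma)^n R_y$ with a Mellin pseudodifferential operator whose symbol $\fc_n$ is precisely the $n$-th summand of \eqref{eq:A-inverse-R-2}, prove that $\sum\fc_n$ converges in $C_b(\mR_+,V(\mR))$ to an element $\fc$ of the norm-closed subalgebra $\widetilde{\cE}(\mR_+,V(\mR))$, deduce \eqref{eq:A-inverse-R-1} by passing to the limit with Theorem~\ref{th:boundedness-PDO}, and finally compute the fiber values via Lemma~\ref{le:values-series}.

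To identify each term, the intertwining $U_\gamma (vI)=(v\circ\gamma)U_\gamma$ rewrites $(vU_\gamma)^n=\big(\prod_{k=0}^{n-1}v\circ\gamma_k\big)U_{\gamma_n}$. Since $\gamma_n\in SOS(\mR_+)$ by Lemma~\ref{le:iterations}, Lemma~\ref{le:shift-R-exact} realises $U_{\gamma_n}R_y$ as a Mellin PDO with symbol $\Psi_n(t)^{1/p}e^{i\psi_n(t)x}r_y(x)$, where $\psi_n(t):=\log[\gamma_n(t)/t]$ and $\Psi_n(t):=1+t\psi_n'(t)$. Pre-composing with the scalar multiplier via Lemma~\ref{le:PDO-3-operators} then produces the $n$-th summand of \eqref{eq:A-inverse-R-2} once one records the two identities $\psi_n(t)=\sum_{k=0}^{n-1}\psi[\gamma_k(t)]$ (immediate from $\gamma_n(t)/t=\prod_{k=0}^{n-1}\gamma_{k+1}(t)/\gamma_k(t)$) and $\Psi_n(t)=\prod_{k=0}^{n-1}\Psi[\gamma_k(t)]$ (from $\Psi(t)=t\gamma'(t)/\gamma(t)$ together with the chain rule). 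Membership of $\fc_n$ in $\widetilde{\cE}(\mR_+,V(\mR))$ then follows from Lemmas~\ref{le:g-sp-rp} and~\ref{le:fb} (applied with $\omega=\psi_n$, which is slowly oscillating by Lemmas~\ref{le:exponent-shift} and~\ref{le:composition}) together with closure of the algebra under finite products.

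The hard part will be establishing convergence of the symbol series in $C_b(\mR_+,V(\mR))$: the $\cB(L^p)$-convergence of the Neumann series is not by itself strong enough. The bound
\[
\|\fc_n\|_{C_b(\mR_+,V(\mR))}\le C(y)\big(1+\sup_t|\psi_n(t)|\big)\,\sup_t\prod_{k=0}^{n-1}|v[\gamma_k(t)]|\cdot|\Psi[\gamma_k(t)]|^{1/p}
\]
from Lemma~\ref{le:fb} makes the first factor $O(n)$ since $\psi$ is bounded, so the task reduces to the uniform geometric decay $\sup_t\prod_{k=0}^{n-1}|v[\gamma_k(t)]|\le C_0\rho^n$ for some $\rho\in(0,1)$. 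This follows from $1\gg v$: choosing a compact $K\subset\mR_+$ with $|v|\le\delta<1$ off $K$ and observing that every monotone orbit of an orientation-preserving shift with fixed points only at $0$ and $\infty$ traverses $K$ in at most a bounded number $N_0$ of steps uniformly in $t$, one gets $\prod\le(\sup|v|)^{N_0}\delta^{n-N_0}$ for $n\ge N_0$. Combined with boundedness of $\Psi$ this yields $\|\fc_n\|_{C_b(\mR_+,V(\mR))}=O(n\rho^n)$, hence absolute convergence; the limit $\fc$ then lies in $\widetilde{\cE}(\mR_+,V(\mR))$ by norm-closedness.

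For part (c), Lemma~\ref{le:values-series} applied to the now $C_b(\mR_+,V(\mR))$-convergent series gives $\fc(\xi,x)=\sum_n\fc_n(\xi,x)$ on $\Delta\times\mR$ and $\fc(t,\pm\infty)=\sum_n\fc_n(t,\pm\infty)$ for $t\in\mR_+$. At $\xi\in M_s(SO(\mR_+))$, iterating Lemma~\ref{le:composition} gives $v[\gamma_k(\cdot)](\xi)=v(\xi)$ and $\psi[\gamma_k(\cdot)](\xi)=\psi(\xi)$ for every $k$, while Lemma~\ref{le:SO-nec} (using $t\psi'(t)\in SO(\mR_+)$ from Lemma~\ref{le:exponent-shift}) yields $\Psi(\xi)=1$ and hence $\Psi[\gamma_k(\cdot)](\xi)=1$. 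The $n$-th summand thus collapses to $\big(v(\xi)e^{i\psi(\xi)x}\big)^n r_y(x)$, and since $|v(\xi)|\le\limsup_{t\to s}|v(t)|<1$ the geometric series sums to $(1-v(\xi)e^{i\psi(\xi)x})^{-1}r_y(x)$. The boundary values at $x=\pm\infty$ vanish because $r_y(\pm\infty)=0$ kills every summand.
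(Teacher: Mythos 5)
Your overall route coincides with the paper's: Neumann series from Lemma~\ref{le:FO}, the identity $(vU_\gamma)^n=\big(\prod_{k=0}^{n-1}v\circ\gamma_k\big)U_{\gamma_n}$, Lemma~\ref{le:shift-R-exact} for $U_{\gamma_n}R_y$, the identities $\psi_n=\sum_{k=0}^{n-1}\psi\circ\gamma_k$ and $\Psi_n=\prod_{k=0}^{n-1}\Psi\circ\gamma_k$, absolute summation of the symbols in $C_b(\mR_+,V(\mR))$ followed by the limit passage via Theorem~\ref{th:boundedness-PDO}, and fiber values via Lemmas~\ref{le:values} and~\ref{le:values-series}; even your ``monotone orbit meets a compact set at most $N_0$ times'' device is exactly the paper's argument (its $k_0$).

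There is, however, one genuine flaw, located precisely at the step you call the hard part. The quantity to control is $\sup_t\prod_{k=0}^{n-1}\big|v[\gamma_k(t)]\big|\big(\Psi[\gamma_k(t)]\big)^{1/p}$, and your reduction of it to geometric decay of $\sup_t\prod_{k=0}^{n-1}|v[\gamma_k(t)]|$ ``combined with boundedness of $\Psi$'' does not go through: $\Psi$ is bounded, but in general $\sup_t\Psi(t)=M>1$, so the $\Psi$-factors only give $M^{n/p}$ and the combined bound becomes $O\big(n(\rho M^{1/p})^n\big)$, which need not decay (nor is $\prod_{k=0}^{n-1}\Psi[\gamma_k(t)]=\Psi_n(t)$ uniformly bounded in $n$ in any obvious way). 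What rescues the estimate --- and what the paper actually does --- is to treat the combined function $v\Psi^{1/p}$: by Lemmas~\ref{le:SO-nec} and~\ref{le:exponent-shift} one has $\Psi(t)\to1$ as $t\to0$ and $t\to\infty$, hence $\limsup_{t\to s}\big|v(t)(\Psi(t))^{1/p}\big|<1$ for $s\in\{0,\infty\}$ because $1\gg v$; then your compact-set/orbit argument applied to $v\Psi^{1/p}$ itself (outside a compact interval $|v\Psi^{1/p}|\le\delta<1$, and each monotone orbit visits that interval at most $N_0$ times uniformly in $t$) yields $\sup_t|a_n(t)|\le C_0\,\delta^{\,n-N_0}$ and hence $\|\fc_n\|_{C_b(\mR_+,V(\mR))}=O(n\delta^n)$. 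You evidently have the needed fact available --- you invoke $\Psi(\xi)=1$ in part (c) --- so the repair is immediate, but as written the decisive convergence estimate is not justified. The remaining parts (a), (c) and the identification of each $\fc_n$ match the paper's proof.
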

\begin{proof}
(a) Since $1\gg v$, from Lemma~\ref{le:FO} we conclude that $A$ is invertible
on the space $L^p(\mR_+)$ and
\begin{equation}\label{eq:A-inverse-R-3}
A^{-1}=\sum_{n=0}^\infty (vU_\gamma)^n.
\end{equation}
Part (a) is proved.

(b) By Lemmas~\ref{le:alg-A} and~\ref{le:g-sp-rp},
\begin{equation}\label{eq:A-inverse-R-4}
R_y=\Phi^{-1}\Op(\fc_0)\Phi,
\end{equation}
where the function $\fc_0$, given by
\begin{equation}\label{eq:A-inverse-R-5}
\fc_0(t,x):=r_y(x),
\quad
(t,x)\in\mR_+\times\mR,
\end{equation}
belongs to $\widetilde{\cE}(\mR_+,V(\mR))$.

If $\gamma\in SOS(\mR_+)$, then from Lemma~\ref{le:iterations} it follows that
$\gamma_n\in SOS(\mR_+)$ for every $n\in\mZ$. By Lemma~\ref{le:exponent-shift}, the
functions
\begin{equation}\label{eq:A-inverse-R-6}
\psi_n(t):=\log\frac{\gamma_n(t)}{t},
\quad
\Psi_n(t):=1+t\psi_n'(t)
\quad t\in\mR_+,\quad n\in\mZ,
\end{equation}
are real-valued functions in $SO(\mR_+)\cap C^1(\mR_+)$.
For every $n\in\mN$,
\begin{equation}\label{eq:A-inverse-R-7}
(vU_\gamma)^nR_y=\left(\prod_{k=0}^{n-1}v\circ\gamma_k\right)U_{\gamma_n}R_y.
\end{equation}
By Lemma~\ref{le:shift-R-exact},
\begin{equation}\label{eq:A-inverse-R-8}
U_{\gamma_n}R_y=\Phi^{-1}\Op(\fd_n)\Phi,
\end{equation}
where the function $\fd_n$, given by
\begin{equation}\label{eq:A-inverse-R-9}
\fd_n(t,x):=\big(\Psi_n(t)\big)^{1/p}e^{i\psi_n(t)x}r_y(x),
\quad
(t,x)\in\mR_+\times\mR,
\end{equation}
belongs to $\widetilde{\cE}(\mR_+,V(\mR))$.
From \eqref{eq:A-inverse-R-6} it follows that
\[
\psi_n(t)
=
\log\frac{\gamma_{n-1}[\gamma(t)]}{t}
=
\log\frac{\gamma_{n-1}[\gamma(t)]}{\gamma(t)}+\log\frac{\gamma(t)}{t}
=
\psi_{n-1}[\gamma(t)]+\psi(t).
\]
Therefore
\begin{equation}\label{eq:A-inverse-R-10}
\psi_n'(t)=\psi_{n-1}'[\gamma(t)]\gamma'(t)+\psi'(t).
\end{equation}
By using $\gamma(t)=te^{\psi(t)}$ and $\gamma'(t)=\Psi(t)e^{\psi(t)}$, from
\eqref{eq:A-inverse-R-6} and \eqref{eq:A-inverse-R-10} we get
\begin{align*}
\Psi_n(t)
&=
t\psi_{n-1}'[\gamma(t)]\Psi(t)e^{\psi(t)}+(1+t\psi'(t))
\\
&=
\Psi(t)\big(1+\gamma(t)\psi_{n-1}'[\gamma(t)]\big)
=
\Psi(t)\Psi_{n-1}[\gamma(t)].
\end{align*}
From this identity by induction we get
\begin{equation}\label{eq:A-inverse-R-11}
\Psi_n(t)=\prod_{k=0}^{n-1}\Psi[\gamma_k(t)],
\quad
t\in\mR_+,
\quad
n\in\mN.
\end{equation}
From \eqref{eq:A-inverse-R-7}--\eqref{eq:A-inverse-R-9} and
\eqref{eq:A-inverse-R-11} we get
\begin{equation}\label{eq:A-inverse-R-12}
(vU_\gamma)^nR_y=\Phi^{-1}\Op(\fc_n)\Phi,
\quad
n\in\mN,
\end{equation}
where the function $\fc_n$ is given for $(t,x)\in\mR_+\times\mR$ by
\begin{equation}\label{eq:A-inverse-R-13}
\fc_n(t,x):=a_n(t)\fb_n(t,x)
\end{equation}
with
\begin{equation}\label{eq:A-inverse-R-14}
a_n(t):=\prod_{k=0}^{n-1}v[\gamma_k(t)]\big(\Psi[\gamma_k(t)]\big)^{1/p},
\quad
\fb_n(t,x):=e^{i\psi_n(t)x}r_y(x).
\end{equation}

By the hypothesis, $v\in SO(\mR_+)$. On the other hand, $\Psi\in SO(\mR_+)$
in view of Lemma~\ref{le:exponent-shift}. Hence $\Psi^{1/p}\in SO(\mR_+)$.
Then, due to Lemmas~\ref{le:composition} and \ref{le:iterations}, $a_n\in SO(\mR_+)$ for all $n\in\mN$.
Therefore, from Lemma~\ref{le:g-sp-rp} we obtain that $\fa_n(t,x):=a_n(t)$,
$(t,x)\in\mR_+\times\mR$, belongs to $\widetilde{\cE}(\mR_+,V(\mR))$.
On the other hand, by Lemma~\ref{le:fb}, $\fb_n\in\widetilde{\cE}(\mR_+,V(\mR))$.
Thus, $\fc_n=a_n\fb_n$ belongs to $\widetilde{\cE}(\mR_+,V(\mR))$ for every
$n\in\mN$.

Following the proof of \cite[Lemma~2.1]{KKL03} (see also \cite[Theorem~2.2]{A96}),
let us show that
\begin{equation}\label{eq:A-inverse-R-15}
\limsup_{n\to\infty}\|a_n\|_{C_b(\mR_+)}^{1/n}<1.
\end{equation}
By Lemmas~\ref{le:SO-nec} and \ref{le:exponent-shift},
\begin{equation}\label{eq:A-inverse-R-16}
\lim_{t\to s}\Psi(t)=1+\lim_{t\to s}t\psi'(t)=1,
\quad
s\in\{0,\infty\}.
\end{equation}
If $1\gg v$, then
\begin{equation}\label{eq:A-inverse-R-17}
\limsup_{t\to s}|v(t)|<1,
\quad
s\in\{0,\infty\}.
\end{equation}
From \eqref{eq:A-inverse-R-16}--\eqref{eq:A-inverse-R-17} it follows that
\[
L^*(s):=\limsup_{t\to s}\big|v(t)\big(\Psi(t)\big)^{1/p}\big|<1,
\quad
s\in\{0,\infty\}.
\]
Fix $\eps>0$ such that $L^*(s)+\eps<1$ for $s\in\{0,\infty\}$. By the definition
of $L^*(s)$, there exist points $t_1,t_2\in\mR_+$ such that
\begin{equation}\label{eq:A-inverse-R-18}
\begin{array}{lll}
\big|v(t)\big(\Psi(t)\big)^{1/p}\big|<L^*(0)+\eps
&\quad\mbox{for}\quad &t\in(0,t_1),
\\
\big|v(t)\big(\Psi(t)\big)^{1/p}\big|<L^*(\infty)+\eps
&\quad\mbox{for}\quad &t\in(t_2,\infty).
\end{array}
\end{equation}
The mapping $\gamma$ has no fixed points other than $0$ and $\infty$. Hence,
either $\gamma(t)>t$ or $\gamma(t)<t$ for all $t\in\mR_+$. For definiteness,
suppose that $\gamma(t)>t$ for all $t\in\mR_+$. Then there exists a number
$k_0\in\mN$ such that $\gamma_{k_0}(t_1)\in (t_2,\infty)$. Put
\[
M_1:=\sup_{t\in\mR_+}\big|v(t)\big(\Psi(t)\big)^{1/p}\big|,
\quad
M_2:=\sup_{t\in\mR_+\setminus[t_1,\gamma_{k_0}(t_1)]}\big|v(t)\big(\Psi(t)\big)^{1/p}\big|.
\]
Since $v\Psi^{1/p}\in SO(\mR_+)$, we have $M_1<\infty$. Moreover, from
\eqref{eq:A-inverse-R-18} we obtain
\[
M_2\le\max(L^*(0),L^*(\infty))+\eps<1.
\]
Then, for every $t\in\mR_+$ and $n\in\mN$,
\begin{align*}
|a_n(t)|
&=
\prod_{k=0}^{n-1}\big|v[\gamma_k(t)]\big(\Psi[\gamma_k(t)]\big)^{1/p}|
\\
&\le
M_1^{k_0}M_2^{n-k_0}
\le
M_1^{k_0}(\max(L^*(0),L^*(\infty))+\eps)^{n-k_0}.
\end{align*}
From here we immediately get \eqref{eq:A-inverse-R-15}.

Now let us show that
\begin{equation}\label{eq:A-inverse-R-19}
\limsup_{n\to\infty}\|\fb_n\|_{C_b(\mR_+,V(\mR))}^{1/n}\le 1.
\end{equation}
By Lemma~\ref{le:fb}, there exists a constant $C(y)\in(0,\infty)$ depending
only on $y$ such that for all $n\in\mN$,
\begin{equation}\label{eq:A-inverse-R-20}
\|\fb_n\|_{C_b(\mR_+,V(\mR))}
\le
C(y)\left(1+\sup_{t\in\mR_+}|\psi_n(t)|\right).
\end{equation}
From \eqref{eq:A-inverse-R-6} we obtain
\begin{equation}\label{eq:A-inverse-R-21}
\psi_n(t)
=\log\left(
\prod_{k=0}^{n-1}\frac{\gamma[\gamma_k(t)]}{\gamma_k(t)}\right)
=
\sum_{k=0}^{n-1}\log\frac{\gamma[\gamma_k(t)]}{\gamma_k(t)}
=
\sum_{k=0}^{n-1}\psi[\gamma_k(t)].
\end{equation}
Let
\[
M_3:=\sup_{t\in\mR_+}|\psi(t)|.
\]
Since $\gamma_k$ is a diffeomorphism of $\mR_+$ onto itself for every $k\in\mZ$,
we have
\begin{equation}\label{eq:A-inverse-R-22}
M_3
=
\sup_{t\in\mR_+}|\psi(t)|=\sup_{t\in\mR_+}|\psi[\gamma(t)]|=\dots=\sup_{t\in\mR_+}|\psi[\gamma_{n-1}(t)]|.
\end{equation}
From \eqref{eq:A-inverse-R-20}--\eqref{eq:A-inverse-R-22} we obtain
\[
\|\fb_n\|_{C_b(\mR_+,V(\mR))}
\le
C(y)(1+M_3n),
\quad n\in\mN,
\]
which implies \eqref{eq:A-inverse-R-19}. Combining \eqref{eq:A-inverse-R-13},
\eqref{eq:A-inverse-R-15}, and \eqref{eq:A-inverse-R-19}, we arrive at
\begin{align*}
\limsup_{n\to\infty}\|\fc_n\|_{C_b(\mR_+,V(\mR))}^{1/n}
\le&
\left(\limsup_{n\to\infty}\|a_n\|_{C_b(\mR_+)}^{1/n}\right)
\\
&\times
\left(\limsup_{n\to\infty}\|\fb_n\|_{C_b(\mR_+,V(\mR))}^{1/n}\right)<1.
\end{align*}
This shows that the series $\sum_{n=0}^\infty \fc_n$ is absolutely convergent
in the norm of $C_b(\mR_+,V(\mR))$. From \eqref{eq:A-inverse-R-13}--\eqref{eq:A-inverse-R-14}
and \eqref{eq:A-inverse-R-21} we get for  $(t,x)\in\mR_+\times\mR$ and $n\in\mN$,
\begin{equation}\label{eq:A-inverse-R-23}
\fc_n(t,x)=
\left(\prod_{k=0}^{n-1}v[\gamma_k(t)]\big(\Psi[\gamma_k(t)]\big)^{1/p}
e^{i\psi[\gamma_k(t)]x}\right)r_y(x).
\end{equation}
We have already shown that $\fc_0$ given by \eqref{eq:A-inverse-R-5} and $\fc_n$,
$n\in\mN$, given by \eqref{eq:A-inverse-R-23} belong to $\widetilde{\cE}(\mR_+,V(\mR))$.
Thus  $\fc:=\sum_{n=0}^\infty\fc_n$ is given by \eqref{eq:A-inverse-R-2} and it belongs
to $\widetilde{\cE}(\mR_+,V(\mR))$.

From \eqref{eq:A-inverse-R-4}, \eqref{eq:A-inverse-R-12} and Theorem~\ref{th:boundedness-PDO}
we get
\begin{align*}
\left\|\Phi^{-1}\Op(\fc)\Phi-\sum_{n=0}^N(vU_\gamma)^nR_y\right\|_{\cB(L^p(\mR_+))}
&=\left\|\Phi^{-1}\left(\fc-\sum_{n=0}^N\fc_n\right)\Phi\right\|_{\cB(L^p(\mR_+))}
\\
&\le
C_p\left\|\fc-\sum_{n=0}^N\fc_n\right\|_{C_b(\mR_+,V(\mR))}
\\
&=
o(1)
\quad \mbox{as}\quad N\to\infty.
\end{align*}
Hence
\[
\sum_{n=0}^\infty (vU_\gamma)^nR_y=\Phi^{-1}\Op(\fc)\Phi.
\]
Combining this identity with \eqref{eq:A-inverse-R-3}, we arrive at
\eqref{eq:A-inverse-R-1}. Part (b) is proved.

(c)
From \eqref{eq:A-inverse-R-5} and \eqref{eq:A-inverse-R-23} it follows that
$\fc_n(t,\pm\infty)=0$ for $n\in\mN\cup\{0\}$ and $t\in\mR_+$. Then, in
view of Lemma~\ref{le:values-series},
\begin{equation}\label{eq:A-inverse-R-24}
\fc(t,\pm\infty)=0,
\quad
t\in\mR_+.
\end{equation}

Since $v,\psi\in SO(\mR_+)$, from Lemma~\ref{le:g-sp-rp} it follows that
the functions
\begin{equation}\label{eq:A-inverse-R-25}
\mathfrak{v}(t,x)=v(t),
\quad
\widetilde{\psi}(t,x):=\psi(t),
\quad
(t,x)\in\mR_+\times\mR,
\end{equation}
belong to the Banach algebra $\widetilde{\cE}(\mR_+,V(\mR))$. Consider the countable
family $\{\mathfrak{v},\widetilde{\psi},\fc\}\cup\{\fc_n\}_{n=0}^\infty$
of functions in $\widetilde{\cE}(\mR_+,V(\mR))$.

Fix $s\in\{0,\infty\}$ and $\xi\in M_s(SO(\mR_+))$. By Lemma~\ref{le:values}
and \eqref{eq:A-inverse-R-25}, there is a sequence $\{t_j\}_{j\in\mN}\subset\mR_+$
and functions $\fc(\xi,\cdot)\in V(\mR_+)$, $\fc_n(\xi,\cdot)\in V(\mR_+)$,
$n\in\mN\cup\{0\}$, such that
\begin{equation}\label{eq:A-inverse-R-26}
\lim_{j\to\infty}t_j=s,
\quad
v(\xi)=\lim_{j\to\infty}v(t_j),
\quad
\psi(\xi)=\lim_{j\to\infty}\psi(t_j),
\end{equation}
and for $n\in\mN\cup\{0\}$ and $x\in\overline{\mR}$,
\begin{equation}\label{eq:A-inverse-R-27}
\fc_n(\xi,x)=\lim_{j\to\infty}\fc_n(t_j,x),
\quad
\fc(\xi,x)=\lim_{j\to\infty}\fc(t_j,x).
\end{equation}
From \eqref{eq:A-inverse-R-24} and the second limit in \eqref{eq:A-inverse-R-27} we get
\begin{equation}\label{eq:A-inverse-R-28}
\fc(\xi,\pm\infty)=\lim_{j\to\infty}\fc(t_j,\pm\infty)=0.
\end{equation}
Trivially,
\begin{equation}\label{eq:A-inverse-R-29}
\fc_0(\xi,x)=r_y(x),
\quad
(\xi,x)\in(\Delta\cup\mR_+)\times\overline{\mR}.
\end{equation}
From Lemmas~\ref{le:SO-nec} and \ref{le:exponent-shift} we obtain
\begin{equation}\label{eq:A-inverse-R-30}
\lim_{t\to s}\big(\Psi(t)\big)^{1/p}=1,
\quad
s\in\{0,\infty\}.
\end{equation}
From Lemma~\ref{le:iterations} it follows that for $k\in\mN$,
\begin{equation}\label{eq:A-inverse-R-31}
\lim_{j\to\infty}v(t_j)=\lim_{j\to\infty}v[\gamma_k(t_j)],
\quad
\lim_{j\to\infty}\psi(t_j)=\lim_{j\to\infty}\psi[\gamma_k(t_j)].
\end{equation}
Combining \eqref{eq:A-inverse-R-23}, \eqref{eq:A-inverse-R-26},
the first limit in \eqref{eq:A-inverse-R-27},
and \eqref{eq:A-inverse-R-30}--\eqref{eq:A-inverse-R-31}, we get for
$x\in\mR$ and $n\in\mN$,
\begin{align}
\fc_n(\xi,x)
&=
\lim_{j\to\infty}
\left(
\prod_{k=0}^{n-1}v[\gamma_k(t_j)]\big(\Psi[\gamma_k(t_j)]\big)^{1/p}e^{i\psi[\gamma_k(t_j)]x}
\right)r_y(x)
\nonumber
\\
&=
\big(v(\xi)e^{i\psi(\xi)x}\big)^nr_y(x).
\label{eq:A-inverse-R-32}
\end{align}
From \eqref{eq:A-inverse-R-29}, \eqref{eq:A-inverse-R-32}, and Lemma~\ref{le:values-series}  we obtain
\begin{equation}\label{eq:A-inverse-R-33}
\fc(\xi,x)
=
\sum_{n=0}^\infty \big(v(\xi)e^{i\psi(\xi)x}\big)^nr_y(x).
\end{equation}
Since $1\gg v$, we have
\[
\limsup_{t\to s}|v(t)|<1,
\quad
s\in\{0,\infty\},
\]
whence, in view of Lemma~\ref{le:SO-fundamental-property}, we obtain
\[
|v(\xi) e^{i\psi(\xi)x}|\le\max_{s\in\{0,\infty\}}\left(\limsup_{t\to s}|v(t)|\right)<1.
\]
Therefore,
\begin{equation}\label{eq:A-inverse-R-34}
\sum_{n=0}^\infty\big(v(\xi)e^{i\psi(\xi)x}\big)^n=\big(1-v(\xi)e^{i\psi(\xi)x}\big)^{-1}.
\end{equation}
From \eqref{eq:A-inverse-R-33} and \eqref{eq:A-inverse-R-34} we get
\begin{equation}\label{eq:A-inverse-R-35}
\fc(\xi,x)=\big(1-v(\xi)e^{i\psi(\xi)x}\big)^{-1}r_y(x),
\quad
(\xi,x)\in\Delta\times\mR.
\end{equation}
Combining \eqref{eq:A-inverse-R-24}, \eqref{eq:A-inverse-R-28}, and
\eqref{eq:A-inverse-R-35}, we arrive at the assertion of part (c).
\end{proof}
\section{Fredholmness and Index of the Operator $V$}\label{sec:proof}
\subsection{First Step of Regularization}
\begin{lem}\label{le:reg1}
Let $\alpha,\beta\in SOS(\mR_+)$ and let $c,d\in SO(\mR_+)$ be such that
$1\gg c$ and $1\gg d$. Then for every $\mu\in[0,1]$ and $y\in(1,\infty)$ the following
statements hold:
\begin{enumerate}
\item[{\rm (a)}]
the operators $I-\mu cU_\alpha$ and $I-\mu dU_\beta$ are invertible on $L^p(\mR_+)$;

\item[{\rm(b)}]
for $(t,x)\in\mR_+\times\mR$, the functions
\begin{align}
\fa_{\mu,y}^{c,\alpha}(t,x)
&:=(1-\mu c(t)(\Omega(t))^{1/p}e^{i\omega(t)x})r_y(x),
\label{eq:reg1-1}
\\
\fa_{\mu,y}^{d,\beta}(t,x)
&:=(1-\mu d(t)(H(t))^{1/p}e^{i\eta(t)x})r_y(x)
\label{eq:reg1-2}
\end{align}
and
\begin{align}
\fc_{\mu,y}^{c,\alpha}(t,x)
:=&
r_y(x)
\nonumber\\
&+\sum_{n=1}^\infty\mu^n
\left(\prod_{k=0}^{n-1}c[\alpha_k(t)]\big(\Omega[\alpha_k(t)]\big)^{1/p}
e^{i\omega[\alpha_k(t)]x}\right)r_y(x),
\label{eq:reg1-3}
\\
\fc_{\mu,y}^{d,\beta}(t,x)
:=&
r_y(x)
\nonumber\\
&+\sum_{n=1}^\infty\mu^n
\left(\prod_{k=0}^{n-1}d[\beta_k(t)]\big(H[\beta_k(t)]\big)^{1/p}
e^{i\eta[\beta_k(t)]x}\right)r_y(x),
\label{eq:reg1-4}
\end{align}
with
\begin{align}
&\omega(t)=\log[\alpha(t)/t],
\quad
\Omega(t)=1+t\omega'(t),
\label{eq:reg1-psi}
\\
&\eta(t)=\log[\beta(t)/t],
\quad
H(t)=1+t\eta'(t),
\label{eq:reg1-zeta}
\end{align}
belong to the algebra $\widetilde{\cE}(\mR_+,V(\mR))$;

\item[{\rm(c)}]
the operators
\begin{align}
V_{\mu,y}
&:=
(I-\mu cU_\alpha)P_y^++(I-\mu dU_\beta)P_y^-,
\label{eq:reg1-5}
\\
L_{\mu,y}
&:=
(I-\mu cU_\alpha)^{-1}P_y^++(I-\mu dU_\beta)^{-1}P_y^-
\label{eq:reg1-6}
\end{align}
are related by
\begin{equation}\label{eq:reg1-7}
V_{\mu,y}L_{\mu,y}\simeq L_{\mu,y}V_{\mu,y}\simeq H_{\mu,y},
\end{equation}
where
\begin{equation}\label{eq:reg1-8}
H_{\mu,y}:=\Phi^{-1}\Op(\fh_{\mu,y})\Phi
\end{equation}
and the function $\fh_{\mu,y}$, given for $(t,x)\in\mR_+\times\mR$ by
\begin{align}
\fh_{\mu,y}(t,x)
:=1+
\frac{1}{4}\big[&
2(r_y(x))^2
\nonumber\\
&-
\fa_{\mu,y}^{d,\beta}(t,x)\fc_{\mu,y}^{c,\alpha}(t,x)
-
\fa_{\mu,y}^{c,\alpha}(t,x)\fc_{\mu,y}^{d,\beta}(t,x)
\big],
\label{eq:reg1-9}
\end{align}
belongs to the algebra $\widetilde{\cE}(\mR_+,V(\mR))$.
\end{enumerate}
\end{lem}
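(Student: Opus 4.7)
For part (a), I would observe that $|\mu c(t)|\le|c(t)|$ whenever $\mu\in[0,1]$, so the hypothesis $1\gg c$ transfers to $1\gg\mu c$; Lemma~\ref{le:FO} then yields invertibility of $I-\mu cU_\alpha$ on $L^p(\mR_+)$ together with a Neumann-series representation of the inverse. Being a norm-convergent sum of elements of $\mathcal{FO}_{\alpha,\beta}$, this inverse lies in the closed algebra $\mathcal{FO}_{\alpha,\beta}$; the same reasoning handles $I-\mu dU_\beta$. Part (b) is a direct application of Lemma~\ref{le:A-R} with $(v,\gamma)$ equal to $(\mu c,\alpha)$ and $(\mu d,\beta)$ for the symbols $\fa_{\mu,y}^{c,\alpha}$ and $\fa_{\mu,y}^{d,\beta}$, and of Lemma~\ref{le:A-inverse-R} for $\fc_{\mu,y}^{c,\alpha}$ and $\fc_{\mu,y}^{d,\beta}$. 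Besides the claimed membership in $\widetilde{\cE}(\mR_+,V(\mR))$, those lemmas also furnish the operator identities $(I-\mu cU_\alpha)R_y=\Phi^{-1}\Op(\fa_{\mu,y}^{c,\alpha})\Phi$ and $(I-\mu cU_\alpha)^{-1}R_y=\Phi^{-1}\Op(\fc_{\mu,y}^{c,\alpha})\Phi$, together with the analogous pair for $(d,\beta)$; these serve as the key input for part~(c).

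For part (c), I would expand $V_{\mu,y}L_{\mu,y}$ into four summands and use Lemma~\ref{le:compactness-commutators} to commute each $P_y^\pm\in\cA$ through the adjacent functional factor, every such factor (including the inverses from part (a)) belonging to $\mathcal{FO}_{\alpha,\beta}$. The two diagonal summands then telescope to $(P_y^+)^2$ and $(P_y^-)^2$, whose sum equals $I+R_y^2/2$ by Lemma~\ref{le:PR-relations}(b); in the two off-diagonal summands the same lemma converts $P_y^\pm P_y^\mp$ into $-R_y^2/4$. Commuting one factor of $R_y\in\cA$ past the neighbouring functional operator, again via Lemma~\ref{le:compactness-commutators}, rewrites each cross term modulo compacts in the form $-\frac{1}{4}[(I-\mu cU_\alpha)^{\eps_1}R_y]\,[(I-\mu dU_\beta)^{\eps_2}R_y]$ with suitable signs $\eps_1,\eps_2\in\{-1,+1\}$.

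At that point part (b) identifies each bracket as a Mellin pseudodifferential operator, and Theorem~\ref{th:comp-semi-commutators-PDO} fuses each such pair into a single Mellin pseudodifferential operator whose symbol is the pointwise product of the two constituent symbols, modulo compacts. Assembling the four contributions and writing $I=\Phi^{-1}\Op(1)\Phi$ together with $R_y^2=\Phi^{-1}\Op(r_y^2)\Phi$ (Lemma~\ref{le:alg-A}, since products of Mellin convolutions are Mellin convolutions with the product symbol) delivers $V_{\mu,y}L_{\mu,y}\simeq H_{\mu,y}$ with $\fh_{\mu,y}$ exactly as in \eqref{eq:reg1-9}. The parallel computation for $L_{\mu,y}V_{\mu,y}$ produces the cross terms with the order of the $\fa$- and $\fc$-factors swapped, but commutativity of pointwise multiplication in $V(\mR)$ yields the same symbol $\fh_{\mu,y}$. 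Membership $\fh_{\mu,y}\in\widetilde{\cE}(\mR_+,V(\mR))$ is then automatic from part (b), Lemma~\ref{le:g-sp-rp} (applied to $r_y$ and to the constant function $1$), and closure of $\widetilde{\cE}(\mR_+,V(\mR))$ under sums and products. The main obstacle I anticipate is the careful bookkeeping of compact remainders through the successive commutations and pseudodifferential compositions, together with verifying that the Neumann-series inverses genuinely belong to the closed algebra $\mathcal{FO}_{\alpha,\beta}$ so that Lemma~\ref{le:compactness-commutators} applies to them.
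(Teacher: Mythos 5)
Your proposal is correct and follows essentially the same route as the paper: part (a) via Lemma~\ref{le:FO} (with the inverses in $\mathcal{FO}_{\alpha,\beta}$), part (b) by applying Lemmas~\ref{le:A-R} and~\ref{le:A-inverse-R} with $v=\mu c,\mu d$, and part (c) by expanding $V_{\mu,y}L_{\mu,y}$, commuting the functional factors past $P_y^\pm$ and $R_y$ via Lemma~\ref{le:compactness-commutators}, invoking Lemma~\ref{le:PR-relations}(b), and fusing the resulting Mellin pseudodifferential operators through Theorem~\ref{th:comp-semi-commutators-PDO}. The observation that commutativity of the symbol products handles $L_{\mu,y}V_{\mu,y}$ matches the paper's ``analogously'' step.
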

\begin{proof}
(a) From Lemma~\ref{le:FO} it follows that the operators
\begin{equation}\label{eq:reg1-10}
I-\mu cU_\alpha,
I-\mu dU_\beta\in\mathcal{FO}_{\alpha,\beta}
\end{equation}
are invertible and
\begin{equation}\label{eq:reg1-11}
(I-\mu cU_\alpha)^{-1},
(I-\mu dU_\beta)^{-1}\in\mathcal{FO}_{\alpha,\beta}.
\end{equation}
This completes the proof of part (a).

(b) From Lemma~\ref{le:alg-A} it follows that
\begin{equation}\label{eq:reg1-12}
R_y^2 =\Phi^{-1}\Co(r_y^2)\Phi=\Phi^{-1}\Op(\fr_y^2)\Phi,
\end{equation}
where $\fr_y(t,x)=r_y(x)$ for $(t,x)\in\mR_+\times\mR$.
From Lemma~\ref{le:g-sp-rp} we deduce that $\fr_y^2\in\widetilde{\cE}(\mR_+,V(\mR))$.
By Lemma~\ref{le:A-R}(a),
\begin{align}
(I-\mu cU_\alpha)R_y
&=\Phi^{-1}\Op(\fa_{\mu,y}^{c,\alpha})\Phi,
\label{eq:reg1-13}
\\
(I-\mu dU_\beta)R_y
&=\Phi^{-1}\Op(\fa_{\mu,y}^{d,\beta})\Phi,
\label{eq:reg1-14}
\end{align}
where the functions $\fa_{\mu,y}^{c,\alpha}$ and $\fa_{\mu,y}^{d,\beta}$,
given by \eqref{eq:reg1-1} and \eqref{eq:reg1-2}, respectively, belong to
$\widetilde{\cE}(\mR_+,V(\mR))$. Lemma~\ref{le:A-inverse-R}(b) implies that
\begin{align}
(I-\mu cU_\alpha)^{-1}R_y
&=\Phi^{-1}\Op(\fc_{\mu,y}^{c,\alpha})\Phi,
\label{eq:reg1-15}
\\
(I-\mu dU_\beta)^{-1}R_y
&=\Phi^{-1}\Op(\fc_{\mu,y}^{d,\beta})\Phi,
\label{eq:reg1-16}
\end{align}
where the functions $\fc_{\mu,y}^{c,\alpha}$ and $\fc_{\mu,y}^{d,\beta}$ given by \eqref{eq:reg1-3}
and \eqref{eq:reg1-4}, respectively, belong to $\widetilde{\cE}(\mR_+,V(\mR))$.
In particular, this completes the proof of part (b).

(c) From \eqref{eq:reg1-10}--\eqref{eq:reg1-11} and Lemmas~\ref{le:compactness-commutators}
and \ref{le:alg-A} it follows that
\begin{align}
& (I-\mu cU_\alpha)^tT\simeq T(I-\mu cU_\alpha)^t,
\label{eq:reg1-17}
\\
& (I-\mu dU_\beta)^tT\simeq T(I-\mu dU_\beta)^t
\label{eq:reg1-18}
\end{align}
for every $t\in\{-1,1\}$ and $T\in\{P_y^+,P_y^-,R_y\}$. Applying consecutively
relations \eqref{eq:reg1-17}--\eqref{eq:reg1-18} with $T\in\{P_y^+,P_y^-\}$,
Lemma~\ref{le:PR-relations}(b), and relations \eqref{eq:reg1-17}--\eqref{eq:reg1-18}
with $T=R_y$,  we get
\begin{align}
V_{\mu,y}L_{\mu,y}
\simeq &
(P_y^+)^2+(I-\mu dU_\beta)(I-\mu cU_\alpha)^{-1}P_y^-P_y^+
\nonumber\\
&+(P_y^-)^2+(I-\mu cU_\alpha)(I-\mu dU_\beta)^{-1}P_y^+P_y^-
\nonumber\\
=&
\left(P_y^++\frac{R_y^2}{4}\right)-(I-\mu dU_\beta)(I-\mu cU_\alpha)^{-1}\frac{R_y^2}{4}
\nonumber\\
&+
\left(P_y^-+\frac{R_y^2}{4}\right)-(I-\mu cU_\alpha)(I-\mu dU_\beta)^{-1}\frac{R_y^2}{4}
\nonumber\\
\simeq &
I+\frac{R_y^2}{2}-\frac{1}{4}(I-\mu dU_\beta)R_y(I-\mu cU_\alpha)^{-1}R_y
\nonumber\\
&\quad\quad\quad\ -
\frac{1}{4}(I-\mu cU_\alpha)R_y(I-\mu dU_\beta)^{-1}R_y.
\label{eq:reg1-19}
\end{align}
Applying  equalities \eqref{eq:reg1-13}--\eqref{eq:reg1-16} to \eqref{eq:reg1-19}, we obtain
\begin{align}
V_{\mu,y}L_{\mu,y}
\simeq &
I+\frac{1}{2}\Phi^{-1}\Op(\fr_y^2)\Phi-\frac{1}{4}\Phi^{-1}\Op(\fa_{\mu,y}^{d,\beta})\Op(\fc_{\mu,y}^{c,\alpha})\Phi
\nonumber\\
&\quad\quad\quad\quad\quad\quad\quad\quad\
-\frac{1}{4}\Phi^{-1}\Op(\fa_{\mu,y}^{c,\alpha})\Op(\fc_{\mu,y}^{d,\beta})\Phi.
\label{eq:reg1-20}
\end{align}
From Theorem~\ref{th:comp-semi-commutators-PDO} we get
\begin{align}
\Op(\fa_{\mu,y}^{d,\beta})\Op(\fc_{\mu,y}^{c,\alpha})
&\simeq
\Op(\fa_{\mu,y}^{d,\beta}\fc_{\mu,y}^{c,\alpha}),
\label{eq:reg1-21}
\\
\Op(\fa_{\mu,y}^{c,\alpha})\Op(\fc_{\mu,y}^{d,\beta})
&\simeq
\Op(\fa_{\mu,y}^{c,\alpha}\fc_{\mu,y}^{d,\beta}).
\label{eq:reg1-22}
\end{align}
Combining \eqref{eq:reg1-20}--\eqref{eq:reg1-22}, we arrive at
\[
V_{\mu,y}L_{\mu,y}\simeq \Phi^{-1}\Op(\fh_{\mu,y})\Phi,
\]
where the function $\fh_{\mu,y}$, given by \eqref{eq:reg1-9}, belongs to the
algebra $\widetilde{\cE}(\mR_+,V(\mR))$ because the functions
\eqref{eq:reg1-1}--\eqref{eq:reg1-4} lie in this algebra in view of part (b).
Analogously, it can be shown that
\[
L_{\mu,y}V_{\mu,y}\simeq \Phi^{-1}\Op(\fh_{\mu,y})\Phi,
\]
which completes the proof.
\end{proof}
\subsection{Fredholmness of the Operator $H_{\mu,2}$}
In this subsection we will prove that the operators $H_{\mu,2}$ given by
\eqref{eq:reg1-8} are Fredholm for every $\mu\in[0,1]$. To this end, we will
use Theorem~\ref{th:Fredholmness-PDO}.

First we represent boundary values of $\fh_{\mu,y}$ in a way, which is convenient
for further analysis.
\begin{lem}\label{le:reg-factorization}
Let $\alpha,\beta\in SOS(\mR_+)$ and let $c,d\in SO(\mR_+)$ be such that
$1\gg c$ and $1\gg d$. If $\fh_{\mu,y}$ is given by \eqref{eq:reg1-9} and
\eqref{eq:reg1-1}--\eqref{eq:reg1-zeta}, then for every $\mu\in[0,1]$ and $y\in(1,\infty)$,
we have
\[
\fh_{\mu,y}(\xi,x)=\left\{\begin{array}{lll}
v_{\mu,y}(\xi,x)\ell_{\mu,y}(\xi,x), &\mbox{if}& (\xi,x)\in\Delta\times\mR,
\\
1, &\mbox{if}& (\xi,x)\in(\mR_+\cup\Delta)\times\{\pm\infty\},
\end{array}\right.
\]
where
\begin{align}
v_{\mu,y}(\xi,x) &:=(1-\mu c(\xi)e^{i\omega(\xi)x})p_y^+(x)+(1-\mu d(\xi)e^{i\eta(\xi)x})p_y^-(x),
\label{eq:reg-fact-2}
\\
\ell_{\mu,y}(\xi,x) &:=(1-\mu c(\xi)e^{i\omega(\xi)x})^{-1}p_y^+(x)+(1-\mu d(\xi)e^{i\eta(\xi)x})^{-1}p_y^-(x)
\label{eq:reg-fact-3}
\end{align}
for $(\xi,x)\in\Delta\times\mR$.
\end{lem}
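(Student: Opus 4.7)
The plan is to compute $\fh_{\mu,y}(\xi,x)$ directly from its defining formula \eqref{eq:reg1-9} by substituting the known boundary values of the constituent symbols, and then to recognize the resulting algebraic expression as the product $v_{\mu,y}\ell_{\mu,y}$ via the identities of Lemma~\ref{le:PR-relations}(a).

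First I would handle the case $x=\pm\infty$. For every $t\in\mR_+$, Lemma~\ref{le:A-R}(b) (applied with $v=\mu c$, $\gamma=\alpha$ and with $v=\mu d$, $\gamma=\beta$; note that $1\gg\mu c$ and $1\gg\mu d$ for all $\mu\in[0,1]$) gives $\fa_{\mu,y}^{c,\alpha}(t,\pm\infty)=\fa_{\mu,y}^{d,\beta}(t,\pm\infty)=0$, and Lemma~\ref{le:A-inverse-R}(c) gives $\fc_{\mu,y}^{c,\alpha}(t,\pm\infty)=\fc_{\mu,y}^{d,\beta}(t,\pm\infty)=0$. Since also $r_y(\pm\infty)=0$, formula \eqref{eq:reg1-9} reduces to $\fh_{\mu,y}(t,\pm\infty)=1$. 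Lemma~\ref{le:values-sum-product} (combined with Lemma~\ref{le:values-series} to handle the series defining $\fc_{\mu,y}^{c,\alpha}$ and $\fc_{\mu,y}^{d,\beta}$) then propagates the same identities to every $\xi\in\Delta$, yielding $\fh_{\mu,y}(\xi,\pm\infty)=1$ for $\xi\in\mR_+\cup\Delta$.

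Next I would handle $(\xi,x)\in\Delta\times\mR$. By Lemma~\ref{le:A-R}(b) and Lemma~\ref{le:A-inverse-R}(c),
\begin{align*}
\fa_{\mu,y}^{c,\alpha}(\xi,x) &= A(\xi,x)r_y(x), & \fa_{\mu,y}^{d,\beta}(\xi,x) &= B(\xi,x)r_y(x),\\
\fc_{\mu,y}^{c,\alpha}(\xi,x) &= A(\xi,x)^{-1}r_y(x), & \fc_{\mu,y}^{d,\beta}(\xi,x) &= B(\xi,x)^{-1}r_y(x),
\end{align*}
where $A(\xi,x):=1-\mu c(\xi)e^{i\omega(\xi)x}$ and $B(\xi,x):=1-\mu d(\xi)e^{i\eta(\xi)x}$. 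Plugging these into \eqref{eq:reg1-9} and applying Lemma~\ref{le:values-sum-product} gives
\[
\fh_{\mu,y}(\xi,x)=1+\tfrac{(r_y(x))^2}{4}\bigl[2-B(\xi,x)/A(\xi,x)-A(\xi,x)/B(\xi,x)\bigr].
\]

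Finally, I would verify the factorization algebraically. Writing $p^\pm=p_y^\pm(x)$, $r=r_y(x)$, $A=A(\xi,x)$, $B=B(\xi,x)$, expand
\[
v_{\mu,y}(\xi,x)\ell_{\mu,y}(\xi,x)=(Ap^++Bp^-)(A^{-1}p^++B^{-1}p^-)=(p^+)^2+(p^-)^2+(A/B+B/A)p^+p^-.
\]
By Lemma~\ref{le:PR-relations}(a), $(p^+)^2+(p^-)^2=(p^++p^-)+r^2/2=1+r^2/2$ and $p^+p^-=-r^2/4$, so
\[
v_{\mu,y}(\xi,x)\ell_{\mu,y}(\xi,x)=1+\tfrac{r^2}{2}-\tfrac{r^2}{4}\bigl(A/B+B/A\bigr)=1+\tfrac{r^2}{4}\bigl[2-B/A-A/B\bigr],
\]
which matches the expression for $\fh_{\mu,y}(\xi,x)$ obtained above. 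The only nonroutine point is the justification that taking boundary values commutes with the sums, products, and infinite series in \eqref{eq:reg1-9}; this is exactly what Lemmas~\ref{le:values-sum-product} and~\ref{le:values-series} provide, once one observes that $1\gg c$, $1\gg d$ force $|\mu c(\xi)|<1$ and $|\mu d(\xi)|<1$ so that $A(\xi,x)$ and $B(\xi,x)$ never vanish.
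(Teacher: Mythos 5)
Your proposal is correct and follows essentially the same route as the paper: both compute the boundary values of $\fh_{\mu,y}$ at $(\mR_+\cup\Delta)\times\{\pm\infty\}$ and on $\Delta\times\mR$ via Lemmas~\ref{le:values-sum-product}, \ref{le:A-R}(b), \ref{le:A-inverse-R}(c), and then use the identities of Lemma~\ref{le:PR-relations}(a) to identify the result with $v_{\mu,y}\ell_{\mu,y}$ (you expand the product and match, the paper rewrites $\fh_{\mu,y}$ into the product, which is the same computation read in the opposite direction).
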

\begin{proof}
From \eqref{eq:reg1-9}, Lemmas~\ref{le:values-sum-product}, \ref{le:A-R}(b), and
\ref{le:A-inverse-R}(c) it follows that
\[
\fh_{\mu,y}(\xi,x)=1
\quad\mbox{for}\quad
(\xi,x)\in(\mR_+\cup\Delta)\times\{\pm\infty\}
\]
and
\begin{align*}
\fh_{\mu,y}(\xi,x)
=&
1+\frac{1}{4}\big[2(r_y(x))^2
\\
&-
(1-\mu d(\xi)e^{i\eta(\xi)x})r_y(x)
(1-\mu c(\xi)e^{i\omega(\xi)x})^{-1}r_y(x)
\\
&-
(1-\mu c(\xi)e^{i\omega(\xi)x})r_y(x)
(1-\mu d(\xi)e^{i\eta(\xi)x})^{-1}r_y(x)\big]
\end{align*}
for $(\xi,x)\in\Delta\times\mR$. By Lemma~\ref{le:PR-relations}(a),
\begin{align*}
&\fh_{\mu,y}(\xi,x)
=
\\
=&
\left(p_y^+(x)+\frac{(r_y(x))^2}{4}\right)
-
(1-\mu d(\xi)e^{i\eta(\xi)x})
(1-\mu c(\xi)e^{i\omega(\xi)x})^{-1}\frac{(r_y(x))^2}{4}
\\
&+
\left(p_y^-(x)+\frac{(r_y(x))^2}{4}\right)
-
(1-\mu c(\xi)e^{i\omega(\xi)x})
(1-\mu d(\xi)e^{i\eta(\xi)x})^{-1}\frac{(r_y(x))^2}{4}
\\
=&
(p_y^+(x))^2
+
(1-\mu d(\xi)e^{i\eta(\xi)x})
(1-\mu c(\xi)e^{i\omega(\xi)x})^{-1}p_y^-(x)p_y^+(x)
\\
&+
(p_y^-(x))^2
+
(1-\mu c(\xi)e^{i\omega(\xi)x})
(1-\mu d(\xi)e^{i\eta(\xi)x})^{-1}p_y^+(x)p_y^-(x)
\\
=& v_{\mu,y}(\xi,x)\ell_{\mu,y}(\xi,x)
\end{align*}
for $(\xi,x)\in\Delta\times\mR$, which completes the proof.
\end{proof}
We were unable to prove that $\fh_{\mu,y}$ satisfies the hypotheses of
Theorem~\ref{th:Fredholmness-PDO} for every $y\in(1,\infty)$ or at least
for $y=p$. However, the very special form of the ranges of $v_{\mu,2}$ and
$\ell_{\mu,2}$ given by \eqref{eq:reg-fact-2} and \eqref{eq:reg-fact-3},
respectively, allows us to prove that $v_{\mu,2}$ and $\ell_{\mu,2}$ are
separated  from zero for all $\mu\in[0,1]$, and thus $\fh_{\mu,2}$ satisfies
the assumptions of Theorem~\ref{th:Fredholmness-PDO}.
\begin{lem}\label{le:Fredholmness-H}
Let $\alpha,\beta\in SOS(\mR_+)$ and let $c,d\in SO(\mR_+)$ be such that
$1\gg c$ and $1\gg d$. Then for every $\mu\in[0,1]$ the operator
$H_{\mu,2}$ given by \eqref{eq:reg1-8} is Fredholm on $L^p(\mR_+)$.
\end{lem}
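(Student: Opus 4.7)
The plan is to verify the hypotheses of Theorem~\ref{th:Fredholmness-PDO}(b) for the symbol $\fh_{\mu,2}$. Since $\Phi$ is an isometric isomorphism of $L^p(\mR_+)$ onto $L^p(\mR_+,d\mu)$, Fredholmness of $H_{\mu,2}=\Phi^{-1}\Op(\fh_{\mu,2})\Phi$ on $L^p(\mR_+)$ is equivalent to Fredholmness of $\Op(\fh_{\mu,2})$ on $L^p(\mR_+,d\mu)$. By part~(c) of Lemma~\ref{le:reg1}, we already know $\fh_{\mu,2}\in\widetilde{\cE}(\mR_+,V(\mR))$, so only the nonvanishing conditions \eqref{eq:Fredholmness-PDO-1} remain to be checked.

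First I would dispose of the boundary values: Lemma~\ref{le:reg-factorization} directly gives $\fh_{\mu,2}(t,\pm\infty)=1$ for $t\in\mR_+$ and $\fh_{\mu,2}(\xi,\pm\infty)=1$ for $\xi\in\Delta$, so these are nonzero automatically. On $\Delta\times\mR$ the same lemma yields the factorization $\fh_{\mu,2}(\xi,x)=v_{\mu,2}(\xi,x)\,\ell_{\mu,2}(\xi,x)$, so it suffices to show that each of $v_{\mu,2}(\xi,\cdot)$ and $\ell_{\mu,2}(\xi,\cdot)$ avoids the origin for every $\xi\in\Delta$ and every $\mu\in[0,1]$.

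The key point is that, because $1\gg c$ and $1\gg d$, Lemma~\ref{le:SO-fundamental-property} (applied along a sequence $t_n\to s$ realizing $\xi\in M_s(SO(\mR_+))$) gives
\[
|c(\xi)|\le\max_{s\in\{0,\infty\}}\limsup_{t\to s}|c(t)|<1,
\qquad
|d(\xi)|\le\max_{s\in\{0,\infty\}}\limsup_{t\to s}|d(t)|<1,
\]
so $r:=\max(|\mu c(\xi)|,|\mu d(\xi)|)<1$ for every $\mu\in[0,1]$ and $\xi\in\Delta$. Recalling from Subsection~\ref{subsec:algebra-A} that $p_2^\pm$ coincide with the expressions in \eqref{eq:p2}, the formulas \eqref{eq:reg-fact-2}--\eqref{eq:reg-fact-3} have exactly the form of the functions $f$ and $g$ in Lemmas~\ref{le:range1} and~\ref{le:range2}, with parameters $v=\mu c(\xi)$, $w=\mu d(\xi)$, $\psi=\omega(\xi)$, $\zeta=\eta(\xi)$ (all real, the last two by Lemmas~\ref{le:exponent-shift} and~\ref{le:SO-fundamental-property}). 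Those lemmas place $v_{\mu,2}(\xi,\mR)\subset\mD(1,r)$ and $\ell_{\mu,2}(\xi,\mR)\subset\mD((1-r^2)^{-1},(1-r^2)^{-1}r)$, and since $r<1$ both disks lie in the open right half-plane and in particular do not contain $0$.

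Combining the two steps, $\fh_{\mu,2}(\xi,x)\neq 0$ for every $(\xi,x)\in(\mR_+\cup\Delta)\times\overline{\mR}$ covered by Theorem~\ref{th:Fredholmness-PDO}(a). Hence Theorem~\ref{th:Fredholmness-PDO}(b) applies and yields Fredholmness of $\Op(\fh_{\mu,2})$, whence of $H_{\mu,2}$. The only genuine subtlety is that the argument relies crucially on $y=2$: the identity $p_2^+(x)+p_2^-(x)=1$ with $p_2^\pm(x)\ge 0$ gives $v_{\mu,2}(\xi,x)$ the structure of a \emph{real} convex combination of two points on the circles $\{|z-1|=|\mu c(\xi)|\}$ and $\{|z-1|=|\mu d(\xi)|\}$, and similarly for $\ell_{\mu,2}$; this is precisely what Lemmas~\ref{le:range1} and~\ref{le:range2} exploit, and it is this geometric feature (rather than any new analytic work) that makes the nonvanishing verification go through.
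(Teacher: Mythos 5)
Your proposal is correct and follows essentially the same route as the paper: reduce to the nonvanishing conditions of Theorem~\ref{th:Fredholmness-PDO} via the factorization of Lemma~\ref{le:reg-factorization}, bound $|c(\xi)|,|d(\xi)|<1$ on $\Delta$ via Lemma~\ref{le:SO-fundamental-property}, and invoke Lemmas~\ref{le:range1} and~\ref{le:range2} to see that $v_{\mu,2}(\xi,\cdot)$ and $\ell_{\mu,2}(\xi,\cdot)$ omit the origin. The only cosmetic difference is that you argue directly with the disks $\mD(1,r)$ and $\mD((1-r^2)^{-1},(1-r^2)^{-1}r)$, whereas the paper packages the same information as containment in a half-plane $\{\operatorname{Re}z>1-\max(C(\Delta),D(\Delta))\}$; the substance is identical.
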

\begin{proof}
By Lemma~\ref{le:reg-factorization}, for the function $\fh_{\mu,2}$ defined
by \eqref{eq:reg1-9} and \eqref{eq:reg1-1}--\eqref{eq:reg1-zeta} we have
\begin{equation}\label{eq:Fredholmness-H-1}
\fh_{\mu,2}(\xi,x)=1\ne 0
\quad\mbox{for}\quad
(\xi,x)\in(\mR_+\cup\Delta)\times\{\pm\infty\}
\end{equation}
and
\[
\fh_{\mu,2}(\xi,x)=v_{\mu,2}(\xi,x)\ell_{\mu,2}(\xi,x)
\quad\mbox{for}\quad
(\xi,x)\in\Delta\times\mR,
\]
where $v_{\mu,2}$ and $\ell_{\mu,2}$ are defined by \eqref{eq:reg-fact-2}
and \eqref{eq:reg-fact-3}, respectively. From Lemmas~\ref{le:range1} and
\ref{le:range2} it follows that for each $\xi\in\Delta$ the ranges of the
continuous functions $v_{\mu,2}(\xi,\cdot)$ and $\ell_{\mu,2}(\xi,\cdot)$
defined on $\mR$ lie in the half-plane
\[
\mathcal{H}^{\mu,\xi}:=\left\{
z\in\mC:\operatorname{Re}z> 1-\mu\max(|c(\xi)|,|d(\xi)|)
\right\}.
\]
From Lemma~\ref{le:SO-fundamental-property} we get
\begin{align*}
C(\Delta)&:=\sup_{\xi\in\Delta}|c(\xi)|=\max_{s\in\{0,\infty\}}\left(\limsup_{t\to s}|c(t)|\right),
\\
D(\Delta)&:=\sup_{\xi\in\Delta}|d(\xi)|=\max_{s\in\{0,\infty\}}\left(\limsup_{t\to s}|d(t)|\right).
\end{align*}
Since $1\gg c$ and $1\gg d$, we see that $C(\Delta)<1$ and $D(\Delta)<1$.
Therefore, for every $\xi\in\Delta$ and $\mu\in[0,1]$, the half-plane
$\mathcal{H}^{\mu,\xi}$ is contained in the half-plane
\[
\left\{z\in\mC:\operatorname{Re}z>
1-\max(|C(\Delta)|,|D(\Delta)|)
\right\}
\]
and the origin does not lie in the latter half-plane. Thus
\begin{equation}\label{eq:Fredholmness-H-2}
\fh_{\mu,2}(\xi,x)=v_{\mu,2}(\xi,x)\ell_{\mu,2}(\xi,x)\ne 0
\quad\mbox{for all}\quad
(\xi,x)\in\Delta\times\mR.
\end{equation}
From \eqref{eq:Fredholmness-H-1}--\eqref{eq:Fredholmness-H-2} and Theorem~\ref{th:Fredholmness-PDO}
we obtain that the operator $H_{\mu,2}$ is Fredholm on $L^p(\mR_+)$.
\end{proof}
\subsection{Proof of the Main Result}
For $\mu\in[0,1]$, consider the operators $V_{\mu,2}$ and $L_{\mu,2}$ defined by
\eqref{eq:reg1-5} and \eqref{eq:reg1-6}, respectively. It is obvious that
$V_{0,2}=P_y^++P_y^-=I$ and $V_{1,2}=V$. By Lemma~\ref{le:reg1}(c),
\begin{equation}\label{eq:proof-main-1}
V_{\mu,2}L_{\mu,2}\simeq L_{\mu,2}V_{\mu,2}\simeq H_{\mu,2},
\quad\mu\in[0,1],
\end{equation}
where the operator $H_{\mu,2}$ given by \eqref{eq:reg1-8} is Fredholm in view of
Lemma~\ref{le:Fredholmness-H}. Let $H_{\mu,2}^{(-1)}$ be a regularizer for $H_{\mu,2}$.
From \eqref{eq:proof-main-1} it follows that
\begin{equation}\label{eq:proof-main-2}
V_{\mu,2}(L_{\mu,2}H_{\mu,2}^{(-1)})\simeq I,
\quad
(H_{\mu,2}^{(-1)}L_{\mu,2})V_{\mu,2}\simeq I,
\quad \mu\in[0,1].
\end{equation}
Thus, $L_{\mu,2}H_{\mu,2}^{(-1)}$ is a right regularizer for $V_{\mu,2}$ and
$H_{\mu,2}^{(-1)}L_{\mu,2}$ is a left regularizer for $V_{\mu,2}$. Therefore,
$V_{\mu,2}$ is Fredholm for every $\mu\in[0,1]$. It is obvious that the
operator-valued function $\mu\mapsto V_{\mu,2}\in\cB(L^p(\mR_+))$ is continuous
on $[0,1]$. Hence the operators $V_{\mu,2}$ belong to the same connected component
of the set of all Fredholm operators. Therefore all $V_{\mu,2}$ have the same index
(see, e.g., \cite[Section~4.10]{GK92}). Since $V_{0,2}=I$, we conclude that
\[
\Ind V=\Ind V_{1,2}=\Ind V_{0,2}=\Ind I=0,
\]
which completes the proof of Theorem~\ref{th:main}.
\qed
\section{Regularization of the Operator $W$}
\label{sec:Regularization}
\subsection{Regularizers of the Operator $W$}
As a by-product of the proof of Section~\ref{sec:proof}, we can describe all regularizers
of a slightly more general operator $W$.
\begin{thm}\label{th:regularization-W}
Let $1<p<\infty$, $\eps_1,\eps_2\in\{-1,1\}$, and $\alpha,\beta\in SOS(\mR_+)$.
Suppose $c,d\in SO(\mR_+)$ are such that $1\gg c$ and $1\gg d$.
Then the operator $W$ given by
\[
W:=(I-cU_\alpha^{\eps_1})P_2^++(I-dU_\beta^{\eps_2})P_2^-
\]
is Fredholm on the space $L^p(\mR_+)$ and $\Ind W=0$. Moreover, each regularizer $W^{(-1)}$
of the operator $W$ is of the form
\begin{equation}\label{eq:regularization-W-1}
W^{(-1)}=[\Phi^{-1}\Op(\ff)\Phi]\cdot [(I- cU_\alpha^{\eps_1})^{-1}P_2^++(I-dU_\beta^{\eps_2})^{-1}P_2^-]+K,
\end{equation}
where $K\in\cK(L^p(\mR_+))$ and $\ff\in\widetilde{\cE}(\mR_+,V(\mR))$ is such that
\begin{equation}\label{eq:regularization-W-2}
\ff(\xi,x)=\left\{\begin{array}{lll}
\displaystyle\frac{1}{w(\xi,x)\ell(\xi,x)}, &\mbox{if}& (\xi,x)\in\Delta\times\mR,
\\[3mm]
1, &\mbox{if}& (\xi,x)\in(\mR_+\cup\Delta)\times\{\pm\infty\},
\end{array}\right.
\end{equation}
where
\begin{align}
w(\xi,x) &:=(1-c(\xi)e^{i\eps_1\omega(\xi)x})p_2^+(x)+(1-d(\xi)e^{i\eps_2\eta(\xi)x})p_2^-(x)\ne 0,
\label{eq:regularization-W-3}
\\
\ell(\xi,x) &:=\frac{p_2^+(x)}{1-c(\xi)e^{i\eps_1\omega(\xi)x}}+\frac{p_2^-(x)}{1-d(\xi)e^{i\eps_2\eta(\xi)x}}\ne 0
\label{eq:regularization-W-4}
\end{align}
for $(\xi,x)\in\Delta\times\mR$ with $\omega(t):=\log[\alpha(t)/t]$ and
$\eta(t):=\log[\beta(t)/t]$ for $t\in\mR_+$.
\end{thm}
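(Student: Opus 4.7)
The plan is to reduce the theorem directly to the machinery developed in Sections~\ref{sec:Mellin-PDO}--\ref{sec:proof}. First, since $\alpha,\beta\in SOS(\mR_+)$, Lemma~\ref{le:iterations} gives $\alpha_{-1},\beta_{-1}\in SOS(\mR_+)$, and a direct computation from the definition of $U_\alpha$ shows that $U_\alpha^{-1}=U_{\alpha_{-1}}$ and $U_\beta^{-1}=U_{\beta_{-1}}$. Thus $W$ is the operator $V$ of Theorem~\ref{th:main} applied to the shifts $\widetilde\alpha:=\alpha^{\eps_1}$ and $\widetilde\beta:=\beta^{\eps_2}$ in $SOS(\mR_+)$, which immediately yields that $W$ is Fredholm on $L^p(\mR_+)$ with $\Ind W=0$.

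Next, for the description of the regularizers, I would repeat the argument of Section~\ref{sec:proof} verbatim with $\mu=1$, $y=2$ and the shifts $\widetilde\alpha,\widetilde\beta$. Setting
\[
\widetilde L:=(I-cU_\alpha^{\eps_1})^{-1}P_2^++(I-dU_\beta^{\eps_2})^{-1}P_2^-,
\]
Lemma~\ref{le:reg1}(c) gives $W\widetilde L\simeq \widetilde LW\simeq \widetilde H:=\Phi^{-1}\Op(\widetilde\fh)\Phi$ for some $\widetilde\fh\in\widetilde{\cE}(\mR_+,V(\mR))$, and Lemma~\ref{le:Fredholmness-H} guarantees that $\widetilde H$ is Fredholm. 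Theorem~\ref{th:Fredholmness-PDO}(b) then supplies a regularizer of $\widetilde H$ of the form $\widetilde H^{(-1)}=\Phi^{-1}\Op(\ff)\Phi+K_0$ with $K_0\in\cK(L^p(\mR_+))$ and $\ff\in\widetilde{\cE}(\mR_+,V(\mR))$ satisfying $\ff=1/\widetilde\fh$ on $(\mR_+\cup\Delta)\times\overline{\mR}$ wherever $\widetilde\fh$ does not vanish. Consequently $\widetilde H^{(-1)}\widetilde L$ is a left regularizer of $W$, and distributing the compact summand it takes the form \eqref{eq:regularization-W-1}; since any two regularizers of a Fredholm operator differ by a compact operator, every regularizer of $W$ is obtained in this way by adjusting $K$.

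Finally, I would identify the boundary symbols of $\ff$ using Lemma~\ref{le:reg-factorization}: for the shifts $\widetilde\alpha,\widetilde\beta$ one has $\widetilde\fh(\xi,x)=\widetilde v(\xi,x)\widetilde\ell(\xi,x)$ on $\Delta\times\mR$ and $\widetilde\fh=1$ on $(\mR_+\cup\Delta)\times\{\pm\infty\}$, where $\widetilde v,\widetilde\ell$ are built from $\widetilde\omega(t):=\log[\widetilde\alpha(t)/t]$ and $\widetilde\eta(t):=\log[\widetilde\beta(t)/t]$. By Lemma~\ref{le:inverse-shift-fibers}, $\widetilde\omega(\xi)=\eps_1\omega(\xi)$ and $\widetilde\eta(\xi)=\eps_2\eta(\xi)$ for every $\xi\in\Delta$, which converts $\widetilde v,\widetilde\ell$ into the functions $w,\ell$ of \eqref{eq:regularization-W-3}--\eqref{eq:regularization-W-4}. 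Their non-vanishing on $\Delta\times\mR$ is precisely what was established in the proof of Lemma~\ref{le:Fredholmness-H} via Lemmas~\ref{le:range1} and~\ref{le:range2}, so $\ff$ is well defined by \eqref{eq:regularization-W-2}. The main obstacle is essentially bookkeeping: one must consistently thread the signs $\eps_1,\eps_2$ through all formulas and invoke Lemma~\ref{le:inverse-shift-fibers} at the right moment to pick up the correct boundary data; no genuinely new estimates are required.
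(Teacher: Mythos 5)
Your proposal is correct and follows essentially the same route as the paper: reduce to Theorem~\ref{th:main} with the shifts $\alpha_{\eps_1},\beta_{\eps_2}$, reuse the regularization scheme of Section~\ref{sec:proof} (Lemmas~\ref{le:reg1}(c) and~\ref{le:Fredholmness-H} plus Theorem~\ref{th:Fredholmness-PDO}(b)), and invoke Lemma~\ref{le:inverse-shift-fibers} to thread the signs $\eps_1,\eps_2$ through the boundary symbols as in Lemma~\ref{le:reg-factorization}. No substantive differences from the paper's argument.
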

\begin{proof}
Since $\alpha,\beta\in SOS(\mR_+)$, from Lemma~\ref{le:iterations} it follows that
$\alpha_{-1}$ and $\beta_{-1}$ also belong to $SOS(\mR_+)$. Taking into account that
$U_\alpha^{\eps_1}=U_{\alpha_{\eps_1}}$ and $U_\beta^{\eps_2}=U_{\beta_{\eps_2}}$,
from Theorem~\ref{th:main} we deduce that the operator $W$ is Fredholm and $\Ind W=0$.
Further, from \eqref{eq:proof-main-2} and Lemma~\ref{le:Fredholmness-H}
it follows that each regularizer $W^{(-1)}$ of $W$ is of the form
\begin{equation}\label{eq:regularization-W-5}
W^{(-1)}=H^{(-1)}L+K_1,
\end{equation}
where $K_1\in\cK(L^p(\mR_+))$,
\begin{equation}\label{eq:regularization-W-6}
L:=(I- cU_\alpha^{\eps_1})^{-1}P_2^++(I-dU_\beta^{\eps_2})^{-1}P_2^-,
\end{equation}
and $H^{(-1)}$ is a regularizer of the Fredholm operator $H$ given by
\[
H:=\Phi^{-1}\Op(\fh)\Phi,
\]
where $\fh\in\widetilde{\cE}(\mR_+,V(\mR))$ is given for $(t,x)\in\mR_+\times\mR$ by
\[
\fh(t,x):=1+
\frac{1}{4}\big[
2(r_2(x))^2
-
\fa_{1,2}^{d,\beta_{\eps_2}}(t,x)\fc_{1,2}^{c,\alpha_{\eps_1}}(t,x)
-
\fa_{1,2}^{c,\alpha_{\eps_1}}(t,x)\fc_{1,2}^{d,\beta_{\eps_2}}(t,x)
\big],
\]
and the functions
$\fa_{1,2}^{c,\alpha_{\eps_1}}$,
$\fc_{1,2}^{c,\alpha_{\eps_1}}$
and
$\fa_{1,2}^{d,\beta_{\eps_2}}$,
$\fc_{1,2}^{d,\beta_{\eps_2}}$
are given by \eqref{eq:reg1-1}--\eqref{eq:reg1-2} and \eqref{eq:reg1-3}--\eqref{eq:reg1-4}
with $\alpha$ and $\beta$ replaced by $\alpha_{\eps_1}$ and $\beta_{\eps_2}$, respectively.

Taking into account Lemma~\ref{le:inverse-shift-fibers}, by analogy with
Lemma~\ref{le:reg-factorization} we get
\begin{equation}\label{eq:regularization-W-7}
\fh(\xi,x)=\left\{\begin{array}{lll}
w(\xi,x)\ell(\xi,x), &\mbox{if}& (\xi,x)\in\Delta\times\mR,
\\
1, &\mbox{if}& (\xi,x)\in(\mR_+\cup\Delta)\times\{\pm\infty\}.
\end{array}\right.
\end{equation}
By Theorem~\ref{th:Fredholmness-PDO}(b), each regularizer $H^{(-1)}$ of the Fredholm
operator $H$ is of the form
\begin{equation}\label{eq:regularization-W-8}
H^{(-1)}=\Phi^{-1}\Op(\ff)\Phi+K_2,
\end{equation}
where $K_2\in\cK(L^p(\mR_+))$ and $\ff\in\widetilde{\cE}(\mR_+,V(\mR))$
is such that
\begin{equation}\label{eq:regularization-W-9}
\begin{array}{lll}
\ff(t,\pm\infty)=1/\fh(t,\pm\infty)
&\mbox{for all}& t\in\mR_+,
\\[3mm]
\ff(\xi,x)=1/\fh(\xi,x)
&\mbox{for all}& (\xi,x)\in\Delta\times\overline{\mR}.
\end{array}
\end{equation}
From \eqref{eq:regularization-W-5}--\eqref{eq:regularization-W-6} and \eqref{eq:regularization-W-8} we get
\eqref{eq:regularization-W-1}. Combining \eqref{eq:regularization-W-7}
and \eqref{eq:regularization-W-9}, we arrive at \eqref{eq:regularization-W-2}.
\end{proof}
\subsection{One Useful Consequence of Regularization of $W$}
\begin{thm}\label{th:for-index}
Under the assumptions of Theorem~{\rm\ref{th:regularization-W}}, for every
$y\in(1,\infty)$ there exists a function $\fg_y\in\widetilde{\cE}(\mR_+,V(\mR))$
such that
\begin{equation}\label{eq:for-index-1}
(\Phi^{-1}\Op(\fg_y)\Phi)W\simeq R_y
\end{equation}
and
\[
\fg_y(\xi,x)=\left\{\begin{array}{lll}
\displaystyle\frac{r_y(x)}{w(\xi,x)}, &\mbox{if}& (\xi,x)\in\Delta\times\mR,
\\[3mm]
0, &\mbox{if}& (\xi,x)\in(\mR_+\cup\Delta)\times\{\pm\infty\},
\end{array}\right.
\]
where the function $w$ is defined for $(\xi,x)\in\Delta\times\mR$ by \eqref{eq:regularization-W-3}.
\end{thm}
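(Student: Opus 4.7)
The plan is to exploit the regularization identity afforded by Theorem~\ref{th:regularization-W}. Set $L:=(I-cU_\alpha^{\eps_1})^{-1}P_2^++(I-dU_\beta^{\eps_2})^{-1}P_2^-$ and let $\ff$ be the symbol from~\eqref{eq:regularization-W-2}; then the operator $[\Phi^{-1}\Op(\ff)\Phi]\,L$ is a regularizer of $W$, so $[\Phi^{-1}\Op(\ff)\Phi]\,L\,W\simeq I$. Multiplying on the right by $R_y$ and sliding $R_y$ through $W$---which is legitimate modulo compacts since $R_y\in\cA$ commutes exactly with $P_2^\pm\in\cA$ and commutes modulo compacts with the functional factors $(I-cU_\alpha^{\eps_1}),(I-dU_\beta^{\eps_2})\in\mathcal{FO}_{\alpha,\beta}$ by Lemma~\ref{le:compactness-commutators}---one obtains $[\Phi^{-1}\Op(\ff)\Phi]\cdot L\,R_y\cdot W\simeq R_y$. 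It thus suffices to realize $[\Phi^{-1}\Op(\ff)\Phi]\cdot L\,R_y$ as a Mellin pseudodifferential operator $\Phi^{-1}\Op(\fg_y)\Phi$ with $\fg_y$ of the stated form.

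To produce $\fg_y$, first exchange $P_2^\pm$ and $R_y$ inside $L\,R_y$ (an exact identity, by commutativity of $\cA$), and then apply Lemma~\ref{le:A-inverse-R}(b)---with $v=c,\gamma=\alpha_{\eps_1}$ and with $v=d,\gamma=\beta_{\eps_2}$, respectively---to write $(I-cU_\alpha^{\eps_1})^{-1}R_y=\Phi^{-1}\Op(\fc_1)\Phi$ and $(I-dU_\beta^{\eps_2})^{-1}R_y=\Phi^{-1}\Op(\fc_2)\Phi$ with $\fc_1,\fc_2\in\widetilde{\cE}(\mR_+,V(\mR))$. Because composing a Mellin pseudodifferential operator on the right with one whose symbol depends only on $x$ is exact (the $\fa\equiv 1$ specialization of Lemma~\ref{le:PDO-3-operators}), this yields $L\,R_y=\Phi^{-1}\Op(\fc_1\,p_2^++\fc_2\,p_2^-)\Phi$, and then Theorem~\ref{th:comp-semi-commutators-PDO} produces, modulo compacts, the desired representation with
\[
\fg_y:=\ff\cdot\bigl(\fc_1\,p_2^++\fc_2\,p_2^-\bigr)\in\widetilde{\cE}(\mR_+,V(\mR)),
\]
the membership following since $\widetilde{\cE}(\mR_+,V(\mR))$ is closed under sums and products.

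For the boundary values, Lemma~\ref{le:A-inverse-R}(c)---with the signs $\eps_1,\eps_2$ absorbed via Lemma~\ref{le:inverse-shift-fibers}---together with the definitions~\eqref{eq:regularization-W-4} and~\eqref{eq:regularization-W-2} give, on $\Delta\times\mR$, the identities $\fc_1(\xi,x)p_2^+(x)+\fc_2(\xi,x)p_2^-(x)=r_y(x)\,\ell(\xi,x)$ and $\ff(\xi,x)=1/[w(\xi,x)\,\ell(\xi,x)]$, so the $\ell$-factors cancel and $\fg_y(\xi,x)=r_y(x)/w(\xi,x)$. On $(\mR_+\cup\Delta)\times\{\pm\infty\}$ the factors $\fc_1,\fc_2$ vanish---in the $\mR_+$-fibre because each contains $r_y(x)$ with $r_y(\pm\infty)=0$, and on $\Delta$ by Lemma~\ref{le:A-inverse-R}(c)---hence $\fg_y$ vanishes there as well. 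The only delicate point, rather than any single hard estimate, is the bookkeeping of which pseudodifferential compositions are exact versus only compact-equivalent; since~\eqref{eq:for-index-1} is itself stated modulo $\cK(L^p(\mR_+))$, all such remainders are absorbed without harm.
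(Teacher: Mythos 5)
Your proposal is correct and follows essentially the same route as the paper: start from the regularizer identity of Theorem~\ref{th:regularization-W}, commute $R_y$ past $W$ modulo compacts, realize $LR_y$ exactly as $\Phi^{-1}\Op(\fc_{1,y}^{c,\alpha_{\eps_1}}p_2^++\fc_{1,y}^{d,\beta_{\eps_2}}p_2^-)\Phi$ via Lemmas~\ref{le:A-inverse-R}(b) and~\ref{le:PDO-3-operators}, set $\fg_y=\ff\,(\fc_{1,y}^{c,\alpha_{\eps_1}}p_2^++\fc_{1,y}^{d,\beta_{\eps_2}}p_2^-)$ using Theorem~\ref{th:comp-semi-commutators-PDO}, and compute the boundary values through Lemmas~\ref{le:A-inverse-R}(c) and~\ref{le:inverse-shift-fibers} (the paper makes the multiplicativity of the fibre values explicit via Lemma~\ref{le:values-sum-product}, which you use implicitly).
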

\begin{proof}
From Theorem~\ref{th:regularization-W} it follows that
\begin{equation}\label{eq:for-index-2}
(\Phi^{-1}\Op(\ff)\Phi)LWR_y\simeq R_y,
\end{equation}
where $L$ is given by \eqref{eq:regularization-W-6} and $\ff\in\widetilde{\cE}(\mR_+,V(\mR))$
satisfies \eqref{eq:regularization-W-2}. From Lemmas~\ref{le:compactness-commutators} and~\ref{le:alg-A}
we get
\begin{equation}\label{eq:for-index-3}
WR_y\simeq R_y W.
\end{equation}
Lemmas~\ref{le:alg-A} and~\ref{le:A-inverse-R}(a)--(b) imply that
\begin{align}
LR_y
&=(I-cU_\alpha^{\eps_1})^{-1}R_yP_2^++(I-dU_\beta^{\eps_2})^{-1}R_yP_2^-
\nonumber\\
&=
\Phi^{-1}\Op(\fc_{1,y}^{c,\alpha_{\eps_1}})\Co(p_2^+)\Phi
+
\Phi^{-1}\Op(\fc_{1,y}^{d,\beta_{\eps_2}})\Co(p_2^-)\Phi,
\label{eq:for-index-4}
\end{align}
where the functions $\fc_{1,y}^{c,\alpha_{\eps_1}}$ and $\fc_{1,y}^{d,\beta_{\eps_2}}$,
given by \eqref{eq:reg1-3} and \eqref{eq:reg1-4} with $\alpha$ and $\beta$ replaced by
$\alpha_{\eps_1}$ and $\beta_{\eps_2}$, respectively, belong to $\widetilde{\cE}(\mR_+,V(\mR))$.
From \eqref{eq:for-index-4} and Lemmas~\ref{le:g-sp-rp} and \ref{le:PDO-3-operators} we obtain
\begin{equation}\label{eq:for-index-5}
LR_y=\Phi^{-1}\Op(\fc_{1,y}^{c,\alpha_{\eps_1}}p_2^++\fc_{1,y}^{d,\beta_{\eps_2}}p_2^-)\Phi,
\end{equation}
where $\fc_{1,y}^{c,\alpha_{\eps_1}}p_2^++\fc_{1,y}^{d,\beta_{\eps_2}}p_2^-\in\widetilde{\cE}(\mR_+,V(\mR))$.
From \eqref{eq:for-index-2}--\eqref{eq:for-index-3}, \eqref{eq:for-index-5}, and
Theorem~\ref{th:comp-semi-commutators-PDO} we get \eqref{eq:for-index-1} with
\begin{equation}\label{eq:for-index-6}
\fg_y:=\ff\,(\fc_{1,y}^{c,\alpha_{\eps_1}}p_2^++\fc_{1,y}^{d,\beta_{\eps_2}}p_2^-)\in\widetilde{\cE}(\mR_+,V(\mR)).
\end{equation}
Obviously,
\begin{equation}\label{eq:for-index-7}
p_2^\pm(\pm\infty)=1,
\quad
p_2^\pm(\mp\infty)=0.
\end{equation}
By Lemmas~\ref{le:inverse-shift-fibers} and \ref{le:A-inverse-R}(c),
\begin{align}
\fc_{1,y}^{c,\alpha_{\eps_1}}(\xi,x)
&=
\left\{\begin{array}{ll}
\displaystyle\frac{r_y(x)}{1-c(\xi)e^{i\eps_1\omega(\xi)x}},
&\mbox{if } (\xi,x)\in\Delta\times\mR,
\\[3mm]
0, &\mbox{if } (\xi,x)\in(\mR_+\cup\Delta)\times\{\pm\infty\},
\end{array}\right.
\label{eq:for-index-8}
\\
\fc_{1,y}^{d,\beta_{\eps_2}}(\xi,x)
&=
\left\{\begin{array}{ll}
\displaystyle\frac{r_y(x)}{1-d(\xi)e^{i\eps_2\eta(\xi)x}},
&\mbox{if } (\xi,x)\in\Delta\times\mR,
\\[3mm]
0, &\mbox{if } (\xi,x)\in(\mR_+\cup\Delta)\times\{\pm\infty\}.
\end{array}\right.
\label{eq:for-index-9}
\end{align}
From \eqref{eq:for-index-6}--\eqref{eq:for-index-9},
\eqref{eq:regularization-W-2}--\eqref{eq:regularization-W-4},
and Lemma~\ref{le:values-sum-product} we get
\[
\fg_y(\xi,x)=0
\quad\mbox{for}\quad (\xi,x)\in(\mR_+\cup\Delta)\times\{\pm\infty\}
\]
and
\begin{align*}
\fg_y(\xi,x)
&=
\ff(\xi,x)\left(
\frac{r_y(x)p_2^+(x)}{1-c(\xi)e^{i\eps_1\omega(\xi)x}}
+
\frac{r_y(x)p_2^-(x)}{1-d(\xi)e^{i\eps_2\eta(\xi)x}}
\right)
\\
&=
\frac{\ell(\xi,x)r_y(x)}{w(\xi,x)\ell(\xi,x)}
=
\frac{r_y(x)}{w(\xi,x)}
\end{align*}
for $(\xi,x)\in\Delta\times\mR$.
\end{proof}
Relation \eqref{eq:for-index-1} will play an important role in the proof
of an index formula for the operator $N$ in \cite{KKL15-progress}.

\end{document}